\newtheorem{thm}{Theorem}[section]
\newtheorem{lemma}[thm]{Lemma}
\newtheorem{coro}[thm]{Corollary}
\newtheorem{defn}[thm]{Definition}
\newtheorem{prop}[thm]{Proposition}
\newcommand{\N}{\mathbb{N}}
\newcommand{\Z}{\mathbb{Z}}
\newcommand{\Q}{\mathbb{Q}}
\newcommand{\R}{\mathbb{R}}
\newcommand{\C}{\mathbb{C}}
\newcommand{\F}{\mathbb{F}}
\newcommand{\M}{\mathfrak{M}}
\newcommand{\vars}{[X_1^{\pm 1},\ldots,X_n^{\pm 1}]}
\title{Multivariate ultrametric root counting}
\author{Mart\'\i n Avenda\~no, Ashraf Ibrahim}
\begin{document}

\maketitle

\begin{abstract}
Let $K$ be a field, complete with respect to a discrete non-archimedian valuation and let $k$ be the residue field.
Consider a system $F$ of $n$ polynomial equations in $K\vars$. Our first result is a reformulation of the classical Hensel's
Lemma in the language of tropical geometry: we show sufficient conditions (semiregularity at $w$) that guarantee
that the first digit map $\delta:(K^\ast)^n\to(k^\ast)^n$ is a one to one correspondence between the solutions of
$F$ in $(K^\ast)^n$ with valuation~$w$ and the solutions in $(k^\ast)^n$ of the initial form system ${\rm in}_w(F)$.
Using this result, we provide an explicit formula for the number of solutions in $(K^\ast)^n$ of a certain class of
systems of polynomial equations (called regular), characterized by having finite tropical prevariety, by having initial forms
consisting only of binomials, and by being semiregular at any point in the tropical prevariety. Finally, as a
consequence of the root counting formula, we obtain the expected number of roots in $(K^\ast)$ of univariate
polynomials with given support and random coefficients.
\end{abstract}

\section{Introduction}\label{sec-intro}

The problem of counting the number of roots of univariate polynomials has been studied
for at least 400 years. The first result that we point out here, stated by Descartes
in 1637~\cite{Desca}, says that the number of positive roots (counted with multiplicities) of a nonzero
polynomial $f\in\R[x]$ is bounded by the number of sign alternations in the sequence of
coefficients of $f$. Over the reals, the problem of root counting was finally solved by
Sturm in 1829, who gave a simple algebraic procedure to determine the exact (as opposed to
an upper bound) number of real roots of a polynomial~$f$ in a given interval~$[a,b]$.
The problem was consider settled for many years until a interest in sparse polynomials
began to grow. While Sturm's technique can count the exact number of roots of any polynomial,
it is highly inefficient for polynomials of high degree with only a few nonzero terms, and
also failed to provide any insight on the roots of such polynomials. On the other hand,
Descartes' rule seems to be more natural for highly sparse polynomials: a simple consequence
of the rule is that the number of nonzero real roots of a polynomial is bounded by twice the
number of its nonzero terms. Incidentally, it has been discovered recently (see~\cite{Ave00})
how to make Descartes' rule count the exact number of real roots: the trick is to multiply
the polynomial by a high enough power of $x+1$ before counting the sign alternations.
Unfortunately, this procedure destroys completely the sparseness of the input polynomial.

\bigskip

In our search for a similar result over different fields, we decided to focus our attention
to complete fields with respect to a non-arquimedian valuation. There were several results
in this setting that indicate that an efficient root counting technique was feasible for
these fields. The first of those results, obtained by H.W.~Lenstra in 1999~\cite{Len99},
gives an upper bound for the number of nonzero roots in~$\Q_p$ (the field of $p$-adic
numbers) of a polynomial $f\in\Q_p[x]$ as a function of the number of nonzero terms of~$f$.
The second, obtained by B.~Poonen in 1998~\cite{Poo98}, gives a similar bound over~$\F_p((u))$
(the field of formal Laurent series with coefficients in~$\F_p$). Using a more unifying
approach, more of these upper bounds for ordered fields, finite extensions of~$\Q_p$, and
Laurent series with coefficients in fields of characteristic zero, were obtained by
M.~Avenda\~no and T.~Krick in 2011~\cite{AvK11}.

\bigskip

In a previous paper (see~\cite{AvIbr}), we showed a root counting procedure for univariate
polynomials that do not destroy the sparsity of the given polynomial. The technique uses a
combination of Hensel's Lemma and Newton Polygon to reduce root counting to solving binomials
over the residue field. The only drawback of this result is that it works only with
regular polynomials, which is an extensive class of polynomials defined in that paper, but
not for generic polynomials in the usual sense. In this paper, we succeded to extend those
results (root counting procedure and upper bounds) to the multivariate setting, to provide a
better understanding of the size of the class of regular polynomials, and also estimates for
the expected number of zeros of random sparse polynomials. Our counting procedure uses basic
tropical geometry and a multivariate version of Hensel's Lemma to reduce the problem to
solving binomial square systems over the residue field.

\bigskip

Our bound for the number of zeros of sparse multivariate square system of polynomials should
be compared with the bound obtained by J.M.~Rojas in 2004~\cite{Roj04}, which can be regarded
as the $p$-adic counterpart of A.~Khovanskii's theorem for fewnomials over the reals~\cite{Kho91},
or as the extension of Lenstra's estimates in the univariate case~\cite{Len99}.
Rojas showed that, over any finite extension $K/\Q_p$, any such system of polynomials has at most
$1+(C_Kn(t-n)^3\log(t-n))^n$ zeros, where $t$ is the total number of different exponents vectors
appearing in polynomials and $C_K$ is a computable constant that depends only on $K$. Our counting
gives a stronger bound, although only for regular systems:

\begin{thm}
Let $F=(f_1,\ldots,f_n)$ be a {\em regular}\footnote{see definition~\ref{def-reg}.} system of polynomials
in $K\vars$. Assume that the residue field $k$ is finite. Then the number of zeros of $F$ in
$(K^\ast)^n$ is at most $\binom{t_1}{2}\cdots\binom{t_n}{2}|k^\ast|^n$, where $t_i$ is the number of nonzero
monomials of $f_i$.
\end{thm}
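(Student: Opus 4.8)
The plan is to stratify the zero set of $F$ in $(K^\ast)^n$ according to the valuation vector of each solution, and to bound the strata separately. First I would recall the basic fact that if $x\in(K^\ast)^n$ is a zero of $f_i$, then the initial form $\mathrm{in}_w(f_i)$ has at least two terms, where $w=(\mathrm{val}(x_1),\ldots,\mathrm{val}(x_n))$; consequently the valuation vector of any zero of the whole system $F$ lies in the tropical prevariety $\mathcal{T}$ of $F$. Since $F$ is regular, $\mathcal{T}$ is finite, say $\mathcal{T}=\{w_1,\ldots,w_m\}$, and the zero set of $F$ is the disjoint union over $j$ of the zeros with valuation $w_j$. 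For each such $j$, regularity guarantees that $F$ is semiregular at $w_j$, so the tropical reformulation of Hensel's Lemma applies and the first digit map $\delta$ restricts to a bijection between the zeros of $F$ with valuation $w_j$ and the zeros in $(k^\ast)^n$ of $\mathrm{in}_{w_j}(F)$. Hence the quantity to be bounded equals $\sum_{j=1}^{m}N_j$, where $N_j$ denotes the number of zeros of $\mathrm{in}_{w_j}(F)$ in $(k^\ast)^n$.

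Next I would bound each $N_j$ by $|k^\ast|^n$. By regularity every initial form $\mathrm{in}_{w_j}(f_i)$ is a binomial, say $a_iX^{\alpha_i}+b_iX^{\beta_i}$ with $a_i,b_i\in k^\ast$ and $\alpha_i\neq\beta_i$, so a point $x\in(k^\ast)^n$ lies in the zero set of $\mathrm{in}_{w_j}(F)$ precisely when $x^{\alpha_i-\beta_i}=-b_i/a_i$ for every $i$. The map $\phi\colon(k^\ast)^n\to(k^\ast)^n$ defined by $\phi(x)=(x^{\alpha_1-\beta_1},\ldots,x^{\alpha_n-\beta_n})$ is a homomorphism of the finite abelian group $(k^\ast)^n$, which has order $|k^\ast|^n$, and the zero set in question is the preimage under $\phi$ of a single element; this preimage is either empty or a coset of $\ker\phi$, so $N_j\le|\ker\phi|\le|k^\ast|^n$. (Finiteness of $k$ is used here.)

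It remains to prove $m=|\mathcal{T}|\le\binom{t_1}{2}\cdots\binom{t_n}{2}$. To each $w\in\mathcal{T}$ I would attach the tuple $(S_1(w),\ldots,S_n(w))$, where $S_i(w)$ is the set of monomials of $f_i$ appearing in $\mathrm{in}_w(f_i)$; since $\mathrm{in}_w(f_i)$ is a binomial, $S_i(w)$ is a two-element subset of the at most $t_i$ monomials of $f_i$, so this gives a map from $\mathcal{T}$ to a set of cardinality $\binom{t_1}{2}\cdots\binom{t_n}{2}$, and it suffices to show the map is injective. Suppose $w,w'\in\mathcal{T}$ have the same image. Then, for each $i$, both $w$ and $w'$ satisfy the linear equation that equates the valuations of the two terms of $f_i$ indexed by $S_i(w)$. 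If the coefficient matrix of this system of $n$ linear equations in $w$ is nonsingular, then $w=w'$. Otherwise the whole segment $[w,w']$ satisfies these equalities; it also satisfies the finitely many weak linear inequalities asserting that those two terms have extremal valuation among all monomials of $f_i$, because such inequalities hold at the two endpoints and their solution sets are convex. Hence $[w,w']\subseteq\mathcal{T}$, and since $\mathcal{T}$ is finite we conclude $w=w'$. Multiplying the three bounds gives that $F$ has at most $\binom{t_1}{2}\cdots\binom{t_n}{2}\,|k^\ast|^n$ zeros in $(K^\ast)^n$.

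The step I expect to demand the most care is the injectivity argument: it is the only place where finiteness of the tropical prevariety is used in an essential way, and one has to make sure that the passage from the open stratum (two prescribed terms strictly extremal) to its closure does not silently discard one of the defining inequalities, so that the segment $[w,w']$ really does sit inside $\mathcal{T}$. If the earlier sections already describe $\mathcal{T}$ as a finite union of solution sets of linear systems indexed by choices of pairs of monomials, I would instead read the factor $\binom{t_i}{2}$ directly off that description; I would choose whichever route is cleanest given the precise form of Definition~\ref{def-reg}.
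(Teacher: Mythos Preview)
Your proof is correct and follows essentially the same route as the paper: stratify by valuation, use semiregularity and the Hensel-type bijection (Theorem~\ref{semi6}/Corollary~\ref{semi7}) to pass to initial forms over $k$, bound each stratum by $|k^\ast|^n$, and bound $|{\rm Trop}(F)|$ by $\prod_i\binom{t_i}{2}$.

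Two remarks. First, your bound $N_j\le|k^\ast|^n$ via the homomorphism $\phi$ is correct but unnecessary: since $k$ is finite, any subset of $(k^\ast)^n$ has at most $|k^\ast|^n$ elements, which is exactly how the paper argues in Corollary~\ref{semi7}. Second, your injectivity argument for the map $w\mapsto(S_1(w),\ldots,S_n(w))$ is actually \emph{more} careful than the paper's treatment. The paper simply observes (in the paragraph following Corollary~\ref{semi7}) that each ${\rm Trop}(f_i)$ lies in a union of $\binom{t_i}{2}$ hyperplanes and asserts that a choice of one hyperplane per $i$ ``determines at most one point in ${\rm Trop}(F)$''; your convexity argument (the segment $[w,w']$ satisfies both the equalities and the weak inequalities, hence lies in ${\rm Trop}(F)$, forcing $w=w'$ by finiteness) is precisely what makes that assertion rigorous. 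Your instinct that this is the step requiring the most care is well founded, and the case split on singularity of the coefficient matrix can even be dropped: the solution set of the equalities is affine, hence convex, so the segment lies in it regardless.
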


This represents an improvement from roughly~$t^{3n}$ to~$t^{2n}$ in the case of regular systems.

\bigskip

Let $K$ be a complete field with respect to a discrete non-archimedian valuation $v:K\to\R\cup\{\infty\}$.
Let $A=\{x\in K\,:\,v(x)\geq 0\}$ be the valuation ring of~$K$. The ring~$A$ is local with maximal ideal
$\M=\{x\in K\,:\,v(x)>0\}$, which is principal $\M=\pi A$ since $v$ is discrete. We denote by~$k=A/\M$
the residue field of~$K$ with respect to~$v$. We denote the first digit of $x\in K^\ast$ by $\delta(x)=\pi^{-v(x)/v(\pi)}x\mod\M$.
The map $\delta:K^\ast\to k^\ast$ is a homomorphism, that can be seen as the composition of the homomorphisms
$$K^\ast\to\Z\times A^\ast\to A^\ast\to k^\ast,$$
where the first map is the isomorphism $x\mapsto (v(x)/v(\pi),\pi^{-v(x)/v(\pi)}x)$, the second arrow is the
projection on the second factor, and the third arrow is the reduction modulo $\M$.

\bigskip

Fix a set $\Delta\subseteq A\setminus\M$ of representatives of the first digit map. For any $x\in K^\ast$,
we write $\Delta(x)$ the representative corresponding to $\delta(x)$. Any element in $x\in K^\ast$ can be factorized
as $x=\pi^{v(x)/v(\pi)}\Delta(x)e(x)$ where $e(x)=x\pi^{-v(x)/v(\pi)}\Delta(x)^{-1}\in 1+\M$. Moreover, this
is the only possible factorization of $x$ as the product of a power of $\pi$, an element in $\Delta$, and an element in $1+\M$.
This implies that the map $K^\ast\to v(\pi)\Z\times k^\ast\times (1+\M)$ given by $x\mapsto(v(x),\delta(x),e(x))$ is
a bijection. The spirit behind most of our results is this bijection: we compute/count the solutions of systems of polynomials
by first looking at the valuation, then the first digit, and then the tail in $1+\M$. Our notions of genericity and
randomness are also based on the bijection.

\bigskip

Consider a square system $F=(f_1,\ldots,f_n)$ of $n$ polynomials in $K\vars$. Denote by $Z_K(F)$ the set of solutions
of $F$ in $(K^\ast)^n$. The study of the set $Z_K(F)$ that we do in this paper is based on the following program:
\begin{enumerate}
\item Study the set $S(F)=\{v(x)\,:\,x\in Z_K(F)\}\subseteq v(\pi)\Z^n$.
\item For each $w\in S(F)$ study the set $$D_w(F)=\{\delta(x)\,:\,x\in Z_K(F),\,v(x)=w\}\subseteq (k^\ast)^n.$$
\item For each $w\in S(F)$ and $\varepsilon\in D_w(F)$ study the set $$E_{w,\varepsilon}(F)=\{e(x)\,:\,x\in Z_K(F),\,v(x)=w,\,\delta(x)=\varepsilon\}
\subseteq(1+\M)^n.$$
\end{enumerate}
A similar program was successfully used by B.~Sturmfels and D.~Speyer in~\cite{SS04}, working on the field of Puisseax
series $\C\{\{t\}\}$, to give a simple proof of Kapranov's Theorem: item~1 correspond with their Theorem~2.1 and item~2
with Corollary~2.2.

\bigskip

Our approach for the first problem requires us to work only with the valuations of the coefficients and the exponent
vectors of the monomials of~$F$. We will prove that $S(F)\subseteq{\rm Trop}(F)\cap v(\pi)\Z^n$, where the set
${\rm Trop}(F)={\rm Trop}(f_1)\cap\cdots\cap{\rm Trop}(f_n)$ is the tropical prevariety induced by $F$. Recall that
for a given polynomial $f=\sum_{i=1}^ta_iX^{\alpha_i}\in K\vars$, the set ${\rm Trop}(f)$ is defined as the set of all possible
$w\in\R^n$ such that $v(a_i)+w\cdot\alpha_i$ for $i=1,\ldots,t$ reaches its minimum value at least twice. For any $w\in\R^n$,
the initial form ${\rm in}_w(f)\in k\vars$ is defined as the sum of $\delta(a_i)X^{\alpha_i}$, but including only the terms
that minimize $v(a_i)+w\cdot\alpha_i$. All the notions of tropical geometry used in this paper are defined in Section~\ref{sec-trop}
and can also be found in the literature in~\cite{SS04,Stur1,Stur3}.

\bigskip

For the second problem, we introduce the notion of $w$-semiregularity at a given
$w\in{\rm Trop}(F)\cap v(\pi)\Z^n$, that guarantees that $D_w(F)$ coincides with the set of zeros of the initial form
system ${\rm in}_w(F)$ in~$(k^\ast)^n$. In a few words, semiregularity at $w$ is a condition on $F$ that reformulates the
hypothesis of Hensel's Lemma (see~\cite[Pag.~48]{Rob00}) for zeros of valuation $w$ and for polynomials with coefficients in $K$ instead of $A$.
Semiregularity at $w$ also provides the solution of the third problem: for each $w\in{\rm Trop}(F)$ and $\varepsilon\in D_w(F)$, there is
exactly one solution of $F$ in $(K^\ast)^n$ with valuation vector $w$ and first digits $\varepsilon$, i.e. the set
$E_{w,\varepsilon}(F)$ has only one element. In  particular, for a $w$-semiregular system of polynomials $F$, where $w\in
{\rm Trop}(F)\cap v(\pi)\Z^n$, the first digit map $\delta:(K^\ast)^n\to(k^\ast)^n$ provides a bijection between roots of $F$
with valuation $w$ and roots of the initial form system ${\rm in}_w(F)$ in $(k^\ast)^n$. The definition of semiregularity (that
was obtained by keeping track several changes of variables carefully) and the main root counting theorem (proven by undoing all
these changes of variables) are presented in detail in Section~\ref{sec-semi} and summarized in the following statement:

\begin{thm}
Let $F=(f_1,\ldots,f_n)$ be a system of polynomial equations in $K\vars$. Let $w\in v(\pi)\Z^n$ be an isolated point of ${\rm Trop}(F)$.
If the initial form system ${\rm in}_w(F)$ has no degenerate zeros in $(k^\ast)^n$, then the first digit map induces a bijection
between the set of zeros of $F$ in $(K^\ast)^n$ with valuation $w$ and the set of zeros of ${\rm in}_w(F)$ in $(k^\ast)^n$.
\end{thm}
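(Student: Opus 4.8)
\emph{Overall approach.} The idea is to recognise the statement as the tropical form of the multivariate Hensel lemma and to prove it by a monomial change of variables that moves $w$ to the origin, followed by a Newton iteration. Since $w\in v(\pi)\Z^n$, write $w=v(\pi)(m_1,\dots,m_n)$ with $m_i\in\Z$ and perform the substitution $X_i\mapsto\pi^{m_i}X_i$. A monomial $aX^\alpha$ of $f_i$ becomes $(a\pi^{\,m\cdot\alpha})X^\alpha$, whose coefficient has valuation $v(a)+w\cdot\alpha$; after multiplying the $i$-th transformed equation by the power of $\pi$ that makes the least of these valuations equal to~$0$ (a scaling by a unit of $K^\ast$, harmless for $Z_K$), and clearing a Laurent monomial if desired, we obtain a system $\tilde F=(\tilde f_1,\dots,\tilde f_n)$ with $\tilde f_i\in A[X_1,\dots,X_n]$. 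Since $\delta(\pi)=1$, the coefficientwise reduction modulo $\M$ of $\tilde f_i$ is exactly ${\rm in}_w(f_i)$, and a point $x\in(K^\ast)^n$ with $v(x)=w$ is a zero of $F$ if and only if $x':=(\pi^{-m_1}x_1,\dots,\pi^{-m_n}x_n)\in(A^\ast)^n$ is a zero of $\tilde F$, with $\overline{x'}=\delta(x)$ coordinatewise. Hence it suffices to prove that reduction modulo $\M$ is a bijection from $Z_K(\tilde F)\cap(A^\ast)^n$ onto the set of zeros of $\overline{\tilde F}={\rm in}_w(F)$ in $(k^\ast)^n$; the isolatedness of $w$ in ${\rm Trop}(F)$ together with the non-degeneracy hypothesis is precisely the statement that $\tilde F$ meets the hypothesis of Hensel's lemma at each such point, i.e.\ that $F$ is ``semiregular at $w$''.

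\medskip

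\emph{Lifting.} Reduction modulo $\M$ is a ring homomorphism that kills exactly the non-initial terms, so for $a\in(A^\ast)^n$ one has $\overline{\tilde F(a)}=\overline{\tilde F}(\overline a)={\rm in}_w(F)(\overline a)$; in particular $\delta$ does send $Z_K(F)\cap\{v=w\}$ into the zeros of ${\rm in}_w(F)$ in $(k^\ast)^n$. Conversely, fix such a zero $\varepsilon$; by hypothesis $\varepsilon$ is not degenerate, i.e.\ the Jacobian matrix of ${\rm in}_w(F)$ at $\varepsilon$ is invertible over $k$. Choose a lift $a\in(A^\ast)^n$ with $\overline a=\varepsilon$. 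By the identity above $\tilde F(a)\in\M^n$, while $\det J_{\tilde F}(a)$, with $J_{\tilde F}(a)=\bigl((\partial\tilde f_i/\partial X_j)(a)\bigr)_{i,j}$, reduces to a nonzero element of $k$ and is therefore a unit of $A$. The multivariate Hensel lemma (Newton's method over the complete local ring $A$; cf.\ \cite[p.~48]{Rob00}) then produces a \emph{unique} $x\in a+\M^n\subseteq(A^\ast)^n$ with $\tilde F(x)=0$: one sets $x^{(0)}=a$ and $x^{(m+1)}=x^{(m)}-J_{\tilde F}(x^{(m)})^{-1}\tilde F(x^{(m)})$, checks by induction that $x^{(m)}\in a+\M^n$ (so every Jacobian stays invertible) and that $v\bigl(\tilde f_i(x^{(m)})\bigr)\to\infty$ for each $i$, and passes to the limit by completeness of $K$. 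As $\overline x=\varepsilon$, this proves surjectivity. Injectivity is the uniqueness clause of the same lemma: if $x,y\in(A^\ast)^n$ are zeros of $\tilde F$ with $\overline x=\overline y$ then $y-x\in\M^n$, and the Taylor expansion of $\tilde F$ about $x$ gives $\tilde F(y)-\tilde F(x)-J_{\tilde F}(x)(y-x)\in\M^{2j}$ whenever $y-x\in\M^j$; invertibility of $J_{\tilde F}(x)$ then upgrades $y-x\in\M^j$ to $y-x\in\M^{2j}$, so iterating, $y-x\in\bigcap_j\M^j$, whence $x=y$.

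\medskip

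\emph{Expected main obstacle.} Once the Jacobian is known to be a unit the Hensel/Newton core is routine, so the real work lies in the bookkeeping: one must check that the monomial substitution $X_i\mapsto\pi^{m_i}X_i$ and the rescaling of the equations are compatible, on the nose, with the definitions of ${\rm in}_w$ and of the first digit map $\delta$, so that the bijection obtained after the change of variables is literally the one induced by $\delta$ on the original system. This chain of substitutions is exactly what the definition of semiregularity at $w$ is built to keep track of, and the step demanding the most care will be checking that the two stated hypotheses --- $w$ isolated in ${\rm Trop}(F)$, and ${\rm in}_w(F)$ free of degenerate zeros in $(k^\ast)^n$ --- do imply semiregularity at $w$.
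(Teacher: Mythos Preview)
Your approach is correct and coincides with the paper's: Theorem~\ref{semi6} is proved by exactly the monomial substitution $X_i\mapsto\pi^{m_i}X_i$ followed by a rescaling of each equation (Lemmas~\ref{semi3} and~\ref{semi4}), reducing to the classical multivariate Hensel lemma (Lemma~\ref{hensel}), which the paper quotes rather than reproves via the Newton iteration you sketch.

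One clarification on your final paragraph: the hypothesis that ${\rm in}_w(F)$ has no degenerate zeros in $(k^\ast)^n$ \emph{is} the definition of semiregularity at $w$ once $w\in{\rm Trop}(F)\cap v(\pi)\Z^n$ (Definition~\ref{def-sreg}), so there is nothing further to verify there. The isolatedness of $w$ in ${\rm Trop}(F)$ is in fact not used at all for the bijection; Theorem~\ref{semi6} in the body drops this assumption entirely. (Isolatedness is only exploited in Lemma~\ref{semi1} to control the zeros of the lower system $F^{[w]}$, which is a separate matter.) Also, a small slip: the factor of $\pi$ you multiply each equation by is not ``a unit of $K^\ast$'' but a general nonzero element; it is harmless for $Z_K$ simply because it is nonzero.
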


\bigskip

As a consequence of the results described in the last paragraph, we derive explicit formulas (or more precisely, an
algorithm) to compute efficiently the number of roots in $(K^\ast)^n$ of a large class of systems of polynomial equations.
These systems, called regular, are characterized by having a finite tropical prevariety, by being semiregular at any
point, and by having initial forms consisting only of binomials. Our notion of regularity and the formulas for the
number of roots generalize those shown in~\cite[Def.~1, Thm.~4.5]{AvIbr} to the multivariate case. All this work is
done in Section~\ref{sec-reg}.

\bigskip

Although regularity seems to impose a very strong
constraint on the system, we prove in Section~\ref{sec-gen} that this is not actually the case: regularity occurs generically when the residue
field $k$ has characteristic zero. The notion of genericity implicit in the previous statement (called tropical genericity)
refers to coefficients whose valuation vector do not lie in the union of certain hyperplanes. This notion is the natural
extension of the genericity in the algebraic geometry sense to tropical geometry.

\bigskip

Since we have explicit formulas for the number of roots of generic polynomials (with given support), we should be able to
compute the expected number of roots in $(K^\ast)^n$ of random polynomials. The only problem is that we need a way of
choosing the coefficients at random that produce tropically generic systems with probability $1$. Since our root counting
formula does not depend on the tail in $1+\M$ of the coefficients, we only need a way of selecting the valuation
of the coefficients and their first digits. The approach that we use consists of choosing the valuation at random uniformly
in an interval $[-M,M]$ and then letting $M$ go to infinity. The first digits are selected uniformly from $k^\ast$
when $k$ is a finite field, or in the case of $k=\R$ with any probability measure that gives equal probability to $\R_{>0}$ and $\R_{<0}$.
In the case that $k$ is algebraically closed, any selection of the first digits gives the same number of roots, and therefore
no probability measure in $k$ is needed.

\bigskip

Let $\mathcal{A}=\{\alpha_1<\alpha_2<\cdots<\alpha_t\}\subset\Z$ be a finite set ($t\geq 2$) and
consider an univariate polynomial $f\in K[X]$ with ${\rm supp}(f)=\mathcal{A}$ and random coefficients (chosen as explained above).
Let $E(\mathcal{A},K)$ be the limit of the expected number of roots of $f$ in $K^\ast$ as $M$ goes to infinity. Our main result of
section~\ref{sec-prob}, is a general formula for $E(\mathcal{A},K)$. As a particular case, we have the following result that it is
interesting in itself, and simple enough to be stated in this introduction:

\begin{thm}
Let $\mathcal{A}=\{\alpha_1<\alpha_2<\cdots<\alpha_t\}\subset\Z$ be a finite set with $t\geq 2$. If $k$ is algebraically closed
with ${\rm char}(k)=0$ or ${\rm char}(k)>\max_{\alpha,\beta\in\mathcal{A}}|\alpha-\beta|$, then
$$2-\frac{2}{t}\leq E(\mathcal{A},K)\leq 2\ln(t).$$
\end{thm}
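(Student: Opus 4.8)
The plan is to combine the Hensel‑type bijection theorem above with the elementary geometry of the Newton polygon, and then to evaluate the resulting expectation by a short probabilistic computation. Write $P_i=(\alpha_i,v(a_i))$ for the Newton points of $f=\sum_{i=1}^t a_iX^{\alpha_i}$. For tropically generic coefficients (no three $P_i$ collinear), $\mathrm{Trop}(f)$ is the finite set of negatives of the slopes of the edges of the lower hull of $\{P_1,\dots,P_t\}$, each such edge $P_iP_j$ ($i<j$, consecutive hull vertices) contributing the \emph{binomial} initial form $\delta(a_i)X^{\alpha_i}+\delta(a_j)X^{\alpha_j}$; since $\mathrm{char}(k)=0$ or $\mathrm{char}(k)>\max_{\alpha,\beta\in\mathcal{A}}|\alpha-\beta|\ge\alpha_j-\alpha_i$, this binomial has no degenerate zero in $k^\ast$, so $f$ is regular and the bijection theorem applies at every $w\in\mathrm{Trop}(f)\cap\Z$. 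Using also $S(f)\subseteq\mathrm{Trop}(f)\cap v(\pi)\Z$ to rule out all other valuations, and the fact that over the algebraically closed field $k$ the above binomial has exactly $\alpha_j-\alpha_i$ roots in $k^\ast$ (again because $\mathrm{char}(k)\nmid\alpha_j-\alpha_i$), I obtain that the number of zeros of $f$ in $K^\ast$ equals
$$\sum_{(i,j)}(\alpha_j-\alpha_i),$$
the sum taken over those lower‑hull edges $P_iP_j$ whose slope lies in $\Z$, i.e.\ with $(\alpha_j-\alpha_i)\mid(v(a_j)-v(a_i))$.

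Next I would take expectations over the random model of Section~\ref{sec-prob} and let $M\to\infty$. Since this count is always at most $\alpha_t-\alpha_1$, the probability (positive for finite $M$, but tending to $0$) that the $P_i$ fail to be in general position contributes nothing in the limit, so I may work edge by edge. For a fixed pair $i<j$, conditioning on all valuations except $v(a_j)$ turns ``$P_iP_j$ is an edge of the lower hull'' into ``$v(a_j)$ lies in a certain interval'', an event that, as $M\to\infty$, becomes asymptotically independent of ``$(\alpha_j-\alpha_i)\mid(v(a_j)-v(a_i))$'', the latter having limiting probability $1/(\alpha_j-\alpha_i)$. Writing $p_{ij}$ for the limiting probability that $P_iP_j$ is an edge of the lower hull of $(\alpha_1,U_1),\dots,(\alpha_t,U_t)$ with $U_1,\dots,U_t$ independent and uniform on $[0,1]$ (this being scale‑ and shift‑invariant, the precise limiting distribution of the valuations is irrelevant), linearity of expectation then gives
$$E(\mathcal{A},K)=\sum_{i<j}(\alpha_j-\alpha_i)\cdot\frac{1}{\alpha_j-\alpha_i}\,p_{ij}=\sum_{i<j}p_{ij}=\mathbb{E}[V]-1,$$
where $V$ denotes the number of vertices of that random lower hull (which has one more vertex than edges).

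It then remains to bound $\mathbb{E}[V]-1$, which is elementary. For the lower bound, both extreme points are always vertices, and for $1<l<t$, whenever $U_l=\min_m U_m$ the horizontal line through $(\alpha_l,U_l)$ supports the lower hull there, so $\mathbb{P}[(\alpha_l,U_l)\text{ is a vertex}]\ge 1/t$; summing yields $\mathbb{E}[V]\ge 2+(t-2)/t$, hence $E(\mathcal{A},K)\ge 2-2/t$. For the upper bound I would use that $(\alpha_l,U_l)$ is a lower‑hull vertex only if it lies strictly below the chord joining the lowest point among $P_1,\dots,P_{l-1}$ to the lowest point among $P_{l+1},\dots,P_t$, whence $U_l<\max(\min_{m<l}U_m,\ \min_{m>l}U_m)$; conditioning on $U_l$ and integrating $\bigl(1-(1-u)^{l-1}\bigr)\bigl(1-(1-u)^{t-l}\bigr)$ gives $\mathbb{P}[(\alpha_l,U_l)\text{ is a vertex}]\le \tfrac{1}{l}+\tfrac{1}{t-l+1}-\tfrac{1}{t}$. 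Summing over $1<l<t$ and adding $2$ for the extreme points, $\mathbb{E}[V]\le 2H_{t-1}-1+\tfrac{2}{t}$, so that $E(\mathcal{A},K)=\mathbb{E}[V]-1\le 2H_t-2\le 2\ln t$, using $H_{t-1}=H_t-\tfrac{1}{t}$ and $H_t\le 1+\ln t$.

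The main obstacle is the reduction in the second paragraph: one must check carefully that the non‑generic finite‑$M$ configurations (those violating general position, or for which the relevant Newton slope leaves the value group) may be discarded in the limit, and that ``being a lower Newton edge'' and ``having slope in $v(\pi)\Z$'' genuinely decouple as $M\to\infty$, so that $E(\mathcal{A},K)$ really is the clean combinatorial quantity $\mathbb{E}[V]-1$. Once that is in place, the two bounds follow at once from the estimates $\tfrac{1}{t}\le\mathbb{P}[(\alpha_l,U_l)\text{ is a vertex}]\le \tfrac{1}{l}+\tfrac{1}{t-l+1}-\tfrac{1}{t}$ and the inequality $H_t\le 1+\ln t$.
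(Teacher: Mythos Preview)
Your proposal is correct and follows essentially the same route as the paper: the paper specializes its general formula (Theorem~\ref{prob1}) to the algebraically closed case to obtain $E(\mathcal{A},K)=\mathbb{E}[|\mathcal{B}|]-1$, exactly your $\mathbb{E}[V]-1$, and then bounds the probability that $\alpha_l$ is a lower-hull vertex by precisely your estimates $\tfrac{1}{t}\le P_l\le\tfrac{1}{l}+\tfrac{1}{t-l+1}-\tfrac{1}{t}$, via the same ``minimum'' and ``max of two minima'' comparisons. The decoupling you flag as the main obstacle is handled in the paper by the Riemann-sum argument inside the proof of Theorem~\ref{prob1}, which is the rigorous version of your conditioning heuristic.
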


A previous estimation for the expected number of roots of random polynomials with $p$-adic coefficients, although for a different
distribution (related to the Haar measure on $\Z_p$) was obtained by S.~Evans in~\cite{Evans06}.

\section{Tropical hypersurface induced by a Laurent polynomial}\label{sec-trop}

The main goal of this section is to introduce the reader the notions of tropical geometry
used in the rest of the paper.

\begin{defn}\label{def-trop}
Let $f\in K\vars$ be a polynomial with $t$ non-zero terms $f=\sum_{i=1}^ta_iX^{\alpha_i}$ where $a_i\in K^\ast$ and $\alpha_i=(\alpha_{i1},\ldots,\alpha_{in})\in\Z^n$ for all $i=1,\ldots,t$. We define the tropicalization of $f$ as the
piecewise linear function ${\rm tr}(f;w)=\min\{ l_i(f;w)\;i=1,\ldots,t\}$ where $l_i(f;w)=v(a_i)+\alpha_i\cdot w$.
The tropical hypersurface induced by $f$ is the set
$${\rm Trop}(f)=\{w_0\in\R^n\;:\;{\rm tr}(f;w)\;\mbox{is not differentiable at}\;w_0\}.$$
\end{defn}

The value of $l_i(f;w)$ is usually referred in the literature as the $w$-weight of the $i$-th term of $f$.

\begin{lemma}\label{trop1}
Let $f\in K\vars$ be a polynomial with $t$ terms and let $w_0\in\R^n$. Then $w_0\in {\rm Trop}(f)$ if and only if there are
indices $1\leq i<j\leq t$ such that $l_i(f;w_0)=l_j(f;w_0)\leq l_k(f;w_0)$ for all $k=1,\ldots,t$.
\end{lemma}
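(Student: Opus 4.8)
The plan is to relate the condition ``${\rm tr}(f;w)$ is not differentiable at $w_0$'' to the combinatorics of which affine-linear functions $l_i(f;w)=v(a_i)+\alpha_i\cdot w$ achieve the minimum at $w_0$. Each $l_i$ is an affine-linear function of $w$, hence smooth, and ${\rm tr}(f;w)=\min_i l_i(f;w)$ is their lower envelope, which is concave and piecewise linear. The key elementary fact I would establish is: the minimum of finitely many affine-linear functions is differentiable at a point $w_0$ if and only if exactly one of them attains the minimum value there; if two or more attain it (with distinct gradients), the envelope has a ``kink'' and fails to be differentiable.

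First I would introduce $I=\{i : l_i(f;w_0)=\min_k l_k(f;w_0)\}$, the set of indices realizing the minimum at $w_0$. For the easy direction, suppose $|I|\geq 2$, say $i,j\in I$ with $i<j$. Then $l_i(f;w_0)=l_j(f;w_0)\leq l_k(f;w_0)$ for all $k$, which is exactly the stated condition; I must also check nondifferentiability, which follows because $\alpha_i\neq\alpha_j$ (distinct exponent vectors, since $f$ has $t$ distinct terms), so the two gradients $\alpha_i,\alpha_j$ differ, and along any line through $w_0$ in a direction $u$ with $\alpha_i\cdot u\neq\alpha_j\cdot u$ the envelope is $\min$ of two lines with different slopes through the same point, hence not differentiable at $w_0$. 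Conversely, if $|I|=1$, say $I=\{i\}$, then for all $k\neq i$ we have $l_i(f;w_0)<l_k(f;w_0)$, and by continuity of the finitely many $l_k$ there is a neighborhood of $w_0$ on which $l_i$ remains strictly smallest, so ${\rm tr}(f;w)=l_i(f;w)$ near $w_0$, which is affine-linear and thus differentiable at $w_0$. Taking contrapositives gives: nondifferentiable at $w_0$ $\iff$ $|I|\geq 2$ $\iff$ there exist $i<j$ with $l_i(f;w_0)=l_j(f;w_0)\leq l_k(f;w_0)$ for all $k$.

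The main obstacle, such as it is, is the nondifferentiability claim in the forward direction: one must rule out the degenerate possibility that two (or more) minimizing linear forms happen to have the same gradient, which would make the envelope locally linear and smooth despite $|I|\geq 2$. This is handled by the standing assumption in Definition~\ref{def-trop} that $f$ has $t$ \emph{distinct} exponent vectors $\alpha_1,\ldots,\alpha_t$, so the gradients of the $l_i$ are pairwise distinct; then picking a direction $u$ separating $\alpha_i$ from $\alpha_j$ and examining the one-variable restriction $s\mapsto{\rm tr}(f;w_0+su)$ exhibits a genuine corner at $s=0$. Everything else is routine: affine-linear functions are smooth, finite minima of continuous functions are continuous, and on the locus where a single index strictly wins the envelope coincides with a single affine function.
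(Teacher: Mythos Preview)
Your proof is correct and shares the same case split as the paper (unique minimizer versus at least two minimizers), with the easy direction handled identically via continuity. For the nontrivial implication the two arguments diverge: the paper proves the contrapositive, assuming differentiability and arguing that the first-order approximation forces ${\rm tr}(f;w)$ to agree with a single $l_i$ on a neighborhood, then squeezing out strict inequalities $l_i(f;w_0)<l_k(f;w_0)$ by perturbing $w$ in the direction $\alpha_i-\alpha_k$; you instead argue directly, restricting ${\rm tr}(f;\cdot)$ to a line in a direction $u$ separating $\alpha_i$ from $\alpha_j$ and exhibiting a corner at $s=0$. Your route is a bit more geometric and avoids the step of identifying the tangent plane with one of the $l_i$; the paper's route has the mild advantage that it never needs to isolate the subtlety you flag (that distinct terms have distinct exponent vectors), since it produces a single strict minimizer outright. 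One small tightening: when you say ``the envelope is $\min$ of two lines,'' it is really the $\min$ over all $k\in I$, but your conclusion still holds because at least two of those slopes differ.
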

\begin{proof}
$(\Leftarrow)$ Assume first that $l_i(f;w_0)<l_k(f;w_0)$ for all $k\neq i$. Since the functions $l_i(f;w)$ are continuous, all these
inequalities remain valid in a neighborhood $U$ of $w_0$, and then ${\rm tr}(f;w)$ coincides with the linear function
$l_i(f;w)$ in $U$. In particular, ${\rm tr}(f;w)$ is differentiable at $w_0$, i.e. $w_0\not\in{\rm Trop}(f)$.

$(\Rightarrow)$ Now take $w_0\not\in{\rm Trop}(f)$. Since ${\rm tr}(f;w)$ is differentiable at $w_0$, then the linear function
$l(w)={\rm tr}(f;w_0)+\nabla{\rm tr}(f;w_0)\cdot(w-w_0)$ approximates ${\rm tr}(f;w)$ with order two near $w_0$, and since
${\rm tr}(f;w)$ is piecewise linear, then ${\rm tr}(f;w)=l(w)=l_i(f;w)$ for some $1\leq i\leq t$ in a neighborhood $U$ of $w_0$.
Therefore, for any other index $k\neq i$, we have that ${\rm tr}(f;w)=l_i(f;w)\leq l_k(f;w)$ in~$U$, or equivalently, $l_i(f;w_0)-l_k(f;w_0)\leq(\alpha_k-\alpha_i)\cdot(w-w_0)$ in~$U$. The right hand side of this inequality can be made strictly
negative by selecting $w-w_0$ a vector with the direction of $\alpha_i-\alpha_k$, hence $l_i(f;w_0)<l_k(f;w_0)$ for all $k\neq i$.
\end{proof}

Note that for any $x\in(K^\ast)^n$, the valuation of the $i$-th term of $f$ at $x$ is given by $l_i(f;v(x))$.

\begin{prop}\label{trop2}
Let $f\in K\vars$ and let $x\in(K^\ast)^n$ be a zero of~$f$. Then $v(x)\in{\rm Trop}(f)$.
\end{prop}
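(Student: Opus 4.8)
The plan is to argue by contradiction using Lemma~\ref{trop1}. Suppose $w_0 := v(x) \notin {\rm Trop}(f)$. Write $f = \sum_{i=1}^t a_i X^{\alpha_i}$. By Lemma~\ref{trop1}, the minimum of the quantities $l_i(f;w_0) = v(a_i) + \alpha_i \cdot w_0$ for $i = 1,\ldots,t$ is attained at exactly one index, say $i_0$; that is, $l_{i_0}(f;w_0) < l_k(f;w_0)$ for every $k \neq i_0$.

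The key observation, recorded in the remark just before the proposition, is that the valuation of the $i$-th term $a_i X^{\alpha_i}$ evaluated at $x$ equals $v(a_i X^{\alpha_i}) = v(a_i) + \alpha_i \cdot v(x) = l_i(f;w_0)$, using that $v$ is a valuation and $v(x^{\alpha_i}) = \alpha_i \cdot v(x)$. So among the $t$ terms of $f$ evaluated at $x$, the term indexed by $i_0$ has strictly smaller valuation than all the others. By the ultrametric (strong triangle) inequality, when one summand of a finite sum has valuation strictly less than every other summand, the valuation of the sum equals that minimal valuation; hence $v(f(x)) = l_{i_0}(f;w_0) < \infty$. In particular $f(x) \neq 0$, contradicting the hypothesis that $x$ is a zero of $f$. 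Therefore $w_0 = v(x) \in {\rm Trop}(f)$.

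There is essentially no serious obstacle here: the argument is a direct application of the non-archimedean property of $v$ together with the characterization of ${\rm Trop}(f)$ in Lemma~\ref{trop1}. The only point requiring a small amount of care is the standard fact that in an ultrametric valued field, $v\!\left(\sum_j b_j\right) = \min_j v(b_j)$ whenever the minimum is attained uniquely; one should perhaps note that this is exactly where strictness of the inequality $l_{i_0}(f;w_0) < l_k(f;w_0)$ is used (without strictness one only gets $v(f(x)) \geq \min_i l_i(f;w_0)$, which is not enough to conclude $f(x)\neq 0$). This also clarifies why the tropical prevariety, rather than just the hypersurface, is the right object: at a point where the minimum is attained twice, cancellation of the leading terms is possible and indeed generic.
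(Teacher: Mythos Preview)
Your proof is correct and essentially identical to the paper's: both use Lemma~\ref{trop1} together with the ultrametric fact that a finite sum with a unique term of minimal valuation cannot vanish. The only cosmetic difference is that the paper phrases this directly (sorting the terms by valuation and noting that the two smallest must coincide), whereas you phrase it as the contrapositive.
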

\begin{proof}
Sort all the $t$ monomials of $f$ according to their valuation at $x$.
$$l_{i_1}(f;v(x))\leq l_{i_2}(f;v(x))\leq\cdots\leq l_{i_t}(f;v(x))$$
Since the sum of all the monomials at $x$ is zero, the first two valuations in this list must coincide.
We conclude from Lemma~\ref{trop1} that $v(x)\in{\rm Trop}(f)$.
\end{proof}

\begin{defn}\label{defn-lp} 
Let $f\in K\vars$ be a polynomial with $t$ non-zero terms $f=\sum_{i=1}^ta_iX^{\alpha_i}$ and let $w\in\R^n$.
We define the lower polynomial $f^{[w]}$ of $f$ with respect to the valuation vector $w$ as
$$f^{[w]}=\!\!\!\!\!\!\!\!\!\!\sum_{i\;:\;l_i(f;w)={\rm tr}(f;w)}\!\!\!\!\!\!\!\!\!\!a_iX^{\alpha_i}\in K\vars.$$
We also define the initial form ${\rm in}_w(f)$ of $f$ with respect to $w$ as
$${\rm in}_w(f)=\!\!\!\!\!\!\!\!\!\!\sum_{i\;:\;l_i(f;w)={\rm tr}(f;w)}\!\!\!\!\!\!\!\!\!\!\delta(a_i)X^{\alpha_i}\in k\vars.$$
\end{defn}

Note that, according to Lemma~\ref{trop1}, $w\in{\rm Trop}(f)$ if and only if ${\rm in}_w(f)$ has at least two terms. This can
be taken as an alternative definition of the tropical hypersurface.
A key property of the initial forms is that if $x\in(K^\ast)^n$ is a solution of $f$ with $v(x)=w$, then $\delta(x)\in(k^\ast)^n$ is a
solution of ${\rm in}_{w}(f)$, as shown in the following lemma.

\begin{lemma}\label{tropz}
Let $f\in K\vars$, let $w\in\R^n$, let $x\in(K^\ast)^n$ with $v(x)=w$, and let $1\leq j\leq n$. Then:
\begin{enumerate}
\item $\pi^{-{\rm tr}(f;w)/v(\pi)}f(x)\in A$.
\item $\pi^{-{\rm tr}(f;w)/v(\pi)}f(x)\equiv{\rm in}_w(f)(\delta(x))\bmod{\M}$.
\item $\pi^{(w_j-{\rm tr}(f;w))/v(\pi)}\frac{\partial f}{\partial X_j}(x)\in A$.
\item $\pi^{(w_j-{\rm tr}(f;w))/v(\pi)}\frac{\partial f}{\partial X_j}(x)\equiv\frac{\partial {\rm in}_w(f)}{\partial X_j}(\delta(x))\bmod{\M}$.
\end{enumerate}
\end{lemma}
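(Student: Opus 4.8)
The plan is to write everything in terms of the explicit expansion $f=\sum_{i=1}^t a_iX^{\alpha_i}$ and factor out the common power of $\pi$ dictated by ${\rm tr}(f;w)$. Write $x=(x_1,\ldots,x_n)$ with $v(x_j)=w_j$, so that $X^{\alpha_i}$ evaluated at $x$ has valuation $\alpha_i\cdot w$ and the $i$-th term $a_ix^{\alpha_i}$ has valuation $l_i(f;w)=v(a_i)+\alpha_i\cdot w\geq{\rm tr}(f;w)$. Hence $\pi^{-{\rm tr}(f;w)/v(\pi)}a_ix^{\alpha_i}$ has valuation $l_i(f;w)-{\rm tr}(f;w)\geq 0$, which is an element of $A$ for every $i$; summing over $i$ gives part~(1). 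For part~(2), observe that for an index $i$ with $l_i(f;w)>{\rm tr}(f;w)$ the corresponding summand lies in $\M$ and thus vanishes modulo $\M$, while for an index $i$ with $l_i(f;w)={\rm tr}(f;w)$ we can write $\pi^{-{\rm tr}(f;w)/v(\pi)}a_ix^{\alpha_i}=(\pi^{-v(a_i)/v(\pi)}a_i)(\pi^{-\alpha_i\cdot w/v(\pi)}x^{\alpha_i})$, each factor a unit whose reduction modulo $\M$ is $\delta(a_i)$ and $\delta(x)^{\alpha_i}=\delta(x^{\alpha_i})$ respectively, using that $\delta$ is a multiplicative homomorphism. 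Summing, the reduction modulo $\M$ is exactly $\sum_{i\,:\,l_i(f;w)={\rm tr}(f;w)}\delta(a_i)\delta(x)^{\alpha_i}={\rm in}_w(f)(\delta(x))$, which is part~(2).

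For parts~(3) and~(4) the key observation is that $\frac{\partial f}{\partial X_j}=\sum_{i=1}^t a_i\alpha_{ij}X^{\alpha_i-e_j}$ where $e_j$ is the $j$-th standard basis vector, so the $i$-th term of $\frac{\partial f}{\partial X_j}$ evaluated at $x$ is $a_i\alpha_{ij}x^{\alpha_i-e_j}$, of valuation $\geq v(a_i)+\alpha_i\cdot w-w_j=l_i(f;w)-w_j$ (with equality unless $\alpha_{ij}$ maps to zero in the residue field). Therefore $\pi^{(w_j-{\rm tr}(f;w))/v(\pi)}\frac{\partial f}{\partial X_j}(x)$ is a sum of terms of valuation $\geq l_i(f;w)-{\rm tr}(f;w)\geq 0$, giving~(3). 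For~(4), again the terms with $l_i(f;w)>{\rm tr}(f;w)$ reduce to zero modulo $\M$, and for the terms with $l_i(f;w)={\rm tr}(f;w)$ we factor $\pi^{(w_j-{\rm tr}(f;w))/v(\pi)}a_i\alpha_{ij}x^{\alpha_i-e_j}$ into unit parts plus the integer multiplier $\alpha_{ij}$; its reduction is $\overline{\alpha_{ij}}\,\delta(a_i)\delta(x)^{\alpha_i-e_j}$, which is precisely the $i$-th term of $\frac{\partial{\rm in}_w(f)}{\partial X_j}$ evaluated at $\delta(x)$. Here one uses the commutativity of differentiation with the operation $\sum_i a_iX^{\alpha_i}\mapsto\sum_i\delta(a_i)X^{\alpha_i}$ restricted to the lower terms: $\frac{\partial}{\partial X_j}$ applied to ${\rm in}_w(f)=\sum_{l_i={\rm tr}}\delta(a_i)X^{\alpha_i}$ is $\sum_{l_i={\rm tr}}\overline{\alpha_{ij}}\,\delta(a_i)X^{\alpha_i-e_j}$, matching term by term.

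I expect the only genuine subtlety — rather than an obstacle — to be bookkeeping the exponents on $\pi$ correctly, since $v$ takes values in $v(\pi)\Z$ and all the normalizing exponents are divided by $v(\pi)$; one must check that each such exponent is an integer so that $\pi$ raised to it makes sense. This holds because $w\in v(\pi)\Z^n$ in the intended application (or more generally whenever the relevant valuations lie in $v(\pi)\Z$), so $l_i(f;w)$, ${\rm tr}(f;w)$, and $w_j$ are all in $v(\pi)\Z$; dividing by $v(\pi)$ yields integers. A second minor point is the case $\alpha_{ij}\equiv 0$ in $k$ (e.g. in positive residue characteristic): then the $i$-th term of $\frac{\partial{\rm in}_w(f)}{\partial X_j}$ is zero, and correspondingly the reduction of the normalized derivative term is $\overline{\alpha_{ij}}\,\delta(a_i)\delta(x)^{\alpha_i-e_j}=0$, so the identity in~(4) still holds verbatim. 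With these checks, parts~(1)–(4) follow by the elementary valuation and homomorphism arguments sketched above.
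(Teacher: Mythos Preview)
Your argument is correct and follows essentially the same route as the paper: expand $f=\sum_i a_iX^{\alpha_i}$, compute the valuation of each term of $f(x)$ and of $\partial f/\partial X_j(x)$, observe that after multiplying by the appropriate power of $\pi$ every term lies in $A$, that the terms with $l_i(f;w)>{\rm tr}(f;w)$ (or additionally with $v(\alpha_{ij})>0$ for the derivative) reduce to zero modulo~$\M$, and that the surviving terms have first digit matching the corresponding term of ${\rm in}_w(f)(\delta(x))$ or of $\partial{\rm in}_w(f)/\partial X_j(\delta(x))$. Your extra remarks on the integrality of the exponents of $\pi$ (forced by the hypothesis $v(x)=w$, hence $w\in v(\pi)\Z^n$) and on the case $\alpha_{ij}\equiv 0$ in $k$ are points the paper leaves implicit, but they do not constitute a different approach.
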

\begin{proof}
Let $f=\sum_{i=1}^ta_iX^{\alpha_i}\in K\vars$. The valuation of the $i$-th term of $f(x)$ is $l_i(f;w)$ and the minimum of all these valuations
is ${\rm tr}(f;w)$. This proves that $\pi^{-{\rm tr}(f;w)/v(\pi)}f(x)\in A$. Moreover, if $l_i(f;w)>{\rm tr}(f;w)$, then the $i$-term of $f(x)$
multiplied by $\pi^{-{\rm tr}(f;w)/v(\pi)}$ reduces to zero modulo $\M$, so $\pi^{-{\rm tr}(f;w)/v(\pi)}f(x)\equiv\pi^{-{\rm tr}(f;w)/v(\pi)}f^{[w]}(x)\bmod{\M}$.
Besides, all the terms in $\pi^{-{\rm tr}(f;w)/v(\pi)}f^{[w]}(x)$ have valuation zero, so reducing it modulo~$\M$ is
the same as adding the first digit of each term. This proves that $\pi^{-{\rm tr}(f;w)/v(\pi)}f(x)\equiv{\rm in}_w(f)(\delta(x))\bmod{\M}$.
The partial derivative of $f$ with respect to $X_j$ is $\partial f/\partial X_j=\sum_{i=1}^ta_i\alpha_{i,j}X^{\alpha_i-e_j}$, where
$\{e_1,\ldots,e_n\}$ is the standard basis of $\R^n$. The valuation of the $i$-th term of $\partial f/\partial X_j(x)$ is $l_i(f;w)-w_j+v(\alpha_{i,j})$,
thus $\pi^{(w_j-{\rm tr}(f;w))/v(\pi)}\partial f/\partial X_j(x)\in A$. Finally, in the reduction of $\pi^{(w_j-{\rm tr}(f;w))/v(\pi)}\partial f/\partial X_j(x)$ 
modulo $\M$, all the terms with $l_i(f;w)>{\rm tr}(f;w)-w_j$ dissapear, as well as the terms with $v(\alpha_{i,j})>0$. The remaining terms have all
valuation zero, and their first digits coincide with those of $\partial {\rm in}_w(f)/\partial X_j(\delta(x))$.
\end{proof}

The following lemma shows that the notions of tropicalization, tropical hypersurface, lower polynomial, and initial form, behave well
under rescaling of the variables and multiplication by monomials.

\begin{lemma}\label{trop4}
Let $f\in K\vars$, $a\in K^\ast$, $b=(b_1,\ldots,b_n)\in (K^\ast)^n$, $\alpha\in\Z^n$ and $w\in\R^n$.
\begin{enumerate}
\item ${\rm tr}(aX^\alpha f; w)={\rm tr}(f;w)+v(a)+\alpha\cdot w$.
\item ${\rm Trop}(aX^\alpha f)={\rm Trop}(f)$.
\item $(aX^\alpha f)^{[w]}=aX^\alpha f^{[w]}$.
\item ${\rm in}_w(aX^\alpha f)=\delta(a)X^\alpha {\rm in}_w(f)$.
\item ${\rm tr}(f(b_1X_1,\ldots,b_nX_n);w)={\rm tr}(f;w+v(b))$.
\item ${\rm Trop}(f(b_1X_1,\ldots,b_nX_n))={\rm Trop}(f)-(v(b_1),\ldots,v(b_n))$.
\item $f(b_1X_1,\ldots,b_nX_n)^{[w]}=f^{[w+v(b)]}(b_1X_1,\ldots,b_nX_n)$.
\item ${\rm in}_w(f(b_1X_1,\ldots,b_nX_n))={\rm in}_{w+v(b)}(f)(\delta(b_1)X_1,\ldots,\delta(b_n)X_n)$.
\end{enumerate}
\end{lemma}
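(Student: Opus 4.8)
The plan is to prove the eight identities of Lemma~\ref{trop4} essentially by unwinding the definitions, exploiting that the valuation $v$ and the first-digit map $\delta$ are homomorphisms on $K^\ast$. I would organize the work into two blocks: items 1--4, which concern multiplication by a monomial $aX^\alpha$, and items 5--8, which concern the rescaling substitution $X_j\mapsto b_jX_j$. In each block the first identity (about ${\rm tr}$) does the real work, and the remaining three follow almost formally from it.

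For items 1--4, write $f=\sum_{i=1}^t a_iX^{\alpha_i}$, so that $aX^\alpha f=\sum_{i=1}^t aa_i X^{\alpha+\alpha_i}$. The $w$-weight of the $i$-th term of $aX^\alpha f$ is $v(aa_i)+(\alpha+\alpha_i)\cdot w = \big(v(a)+\alpha\cdot w\big)+\big(v(a_i)+\alpha_i\cdot w\big) = c + l_i(f;w)$, where $c=v(a)+\alpha\cdot w$ does not depend on $i$. Taking the minimum over $i$ gives item~1 immediately. Since every weight is shifted by the same constant $c$, the set of indices attaining the minimum is unchanged, which gives item~3 (the lower polynomial picks out the same monomials, each multiplied by $aX^\alpha$) and, using $\delta(aa_i)=\delta(a)\delta(a_i)$, also item~4. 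Item~2 then follows from item~1 together with the observation after Definition~\ref{defn-lp}: $w\in{\rm Trop}(g)$ iff ${\rm in}_w(g)$ has at least two terms; since ${\rm in}_w(aX^\alpha f)$ and ${\rm in}_w(f)$ have the same number of terms by item~4, their tropical hypersurfaces coincide.

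For items 5--8, set $g(X)=f(b_1X_1,\ldots,b_nX_n)$, so that the $i$-th monomial $a_iX^{\alpha_i}$ of $f$ becomes $a_i b^{\alpha_i} X^{\alpha_i}$ in $g$, where $b^{\alpha_i}:=b_1^{\alpha_{i1}}\cdots b_n^{\alpha_{in}}$. The $w$-weight of this term of $g$ is $v(a_ib^{\alpha_i})+\alpha_i\cdot w = v(a_i)+\alpha_i\cdot v(b)+\alpha_i\cdot w = v(a_i)+\alpha_i\cdot(w+v(b)) = l_i(f;w+v(b))$, using that $v$ is a homomorphism so $v(b^{\alpha_i})=\alpha_i\cdot v(b)$. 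Taking the minimum over $i$ gives item~5. Because the weight of each term of $g$ at $w$ equals the weight of the corresponding term of $f$ at $w+v(b)$, the minimizing index sets match, which yields item~7; applying $\delta$ and using $\delta(b^{\alpha_i})=\delta(b)^{\alpha_i}$ (another homomorphism identity) gives item~8. Item~6 follows from item~5 exactly as item~2 followed from item~1: $w$ makes the tropicalization non-differentiable iff the minimum is attained twice, and by item~5 this happens for $g$ at $w$ precisely when it happens for $f$ at $w+v(b)$.

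I do not anticipate any genuine obstacle: the whole lemma is a bookkeeping exercise built on the two facts that $v\colon K^\ast\to\R$ and $\delta\colon K^\ast\to k^\ast$ are homomorphisms, together with the multiplicativity of monomials, $b^{\alpha+\beta}=b^\alpha b^\beta$. The only point requiring a little care is notational: keeping straight that substitution shifts the \emph{argument} of the tropicalization by $v(b)$ (with a plus sign in items 5, 7, 8 but a minus sign in item~6, because ${\rm Trop}$ is the locus of bad $w$'s and the substitution reparametrizes that locus). I would therefore present items 5--8 carefully in that order, since 7 and 8 are genuinely statements about which monomials survive and must be phrased so the sign is transparent. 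Once items 1 and 5 are in hand, I would state that the remaining six follow by inspection of the surviving-monomial index sets and leave the reader to check the one-line verifications.
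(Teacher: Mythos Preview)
Your proposal is correct and follows essentially the same approach as the paper: both arguments reduce everything to the term-by-term identities $l_i(aX^\alpha f;w)=l_i(f;w)+v(a)+\alpha\cdot w$ and $l_i(f(bX);w)=l_i(f;w+v(b))$, deduce items~1 and~5 by taking minima, obtain items~3 and~7 from the fact that the minimizing index sets are unchanged, and then get items~2,~6 and~4,~8 as formal consequences. Your write-up is slightly more explicit about the homomorphism properties of $v$ and $\delta$ and about the sign in item~6, but there is no substantive difference.
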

\begin{proof}
Items~1 and~5 follow immediately from the identities $l_i(aX^\alpha f;w)=l_i(f;w)+v(a)+\alpha\cdot w$ and
$l_i(f(b_1X_1,\ldots,b_nX_n);w)=l_i(f;w+v(b))$. Items~2 and~6 are consequences of the previous
two and the definition of tropical hypersurface. The indices of the monomials of $f$ that are in
$(aX^\alpha f)^{[w]}$ correspond with the indices that minimize the value of $l_i(aX^\alpha f;w)$. Since
$v(a)+\alpha\cdot w$ is a constant, these indices also minimize $l_i(f;w)$, i.e. they correspond with the monomials
of $f$ in $f^{[w]}$. Therefore $(aX^\alpha f)^{[w]}=aX^\alpha f^{[w]}$. Similarly, the indices of
the terms of $f$ in $f(b_1X_1,\ldots,b_nX_n)^{[w]}$ minimize the expression $l_i(f(b_1X_1,\ldots,b_nX_n);w)$,
and therefore, coincide with the same indices of the monomials in $f^{[w+v(b)]}(b_1X_1,\ldots,b_nX_n)$. This proves
that $f(b_1X_1,\ldots,b_nX_n)^{[w]}=f^{[w+v(b)]}(b_1X_1,\ldots,b_nX_n)$. Finally, items~4 and~8 follow from~3 and~7
by taking the first digit of all the terms.
\end{proof}

In the next two lemmas, we show the relation between ${\rm Trop}(f)$ and ${\rm Trop}(f^{[w]})$ for any $w\in\R^n$.
It is clear that if $w\not\in{\rm Trop}(f)$, then $f^{[w]}$ is a single monomial, and therefore ${\rm Trop}(f^{[w]})=\emptyset$.
Otherwise, when $w\in{\rm Trop}(f)$, we have $w\in{\rm Trop}(f^{[w]})$ and ${\rm tr}(f;w)={\rm tr}(f^{[w]};w)$.
We will prove next that the tropical hypersurface ${\rm Trop}(f^{[w]})$ is a cone centered at $w$, that coincides with ${\rm Trop}(f)$
in a neighborhood of $w$. This completely characterizes ${\rm Trop}(f^{[w]})$ in terms of ${\rm Trop}(f)$.

\begin{lemma}\label{trop5}
Let $f\in K\vars$ and let $w\in{\rm Trop}(f)$. Then, for any $w'\in{\rm Trop}(f^{[w]})$, the ray $w+\lambda (w'-w)$ with $\lambda\geq 0$ is contained in ${\rm Trop}(f^{[w]})$.
\end{lemma}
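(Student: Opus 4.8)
The plan is to work directly from the characterization of the tropical hypersurface in terms of weights (Lemma~\ref{trop1}) applied to the polynomial $f^{[w]}$. Write $f^{[w]}=\sum_{i\in I}a_iX^{\alpha_i}$, where $I$ is the set of indices achieving the minimum $l_i(f;w)={\rm tr}(f;w)$; by definition of $f^{[w]}$ we have $l_i(f^{[w]};w)={\rm tr}(f;w)$ for \emph{every} $i\in I$, i.e.\ \emph{all} the linear functions $l_i(f^{[w]};\cdot)$ agree at $w$. This is the crucial structural fact: the point $w$ is a common point of all the affine hyperplanes $\{l_i=c\}$ defining the pieces of ${\rm tr}(f^{[w]};\cdot)$, so the whole picture is ``centered'' at $w$.

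First I would fix $w'\in{\rm Trop}(f^{[w]})$ and, using Lemma~\ref{trop1} applied to $f^{[w]}$, pick indices $i<j$ in $I$ with $l_i(f^{[w]};w')=l_j(f^{[w]};w')\leq l_k(f^{[w]};w')$ for all $k\in I$. Then I would consider the point $w_\lambda=w+\lambda(w'-w)$ for $\lambda\geq 0$ and compute, for any $k\in I$,
\[
l_k(f^{[w]};w_\lambda)=v(a_k)+\alpha_k\cdot w+\lambda\,\alpha_k\cdot(w'-w)={\rm tr}(f;w)+\lambda\bigl(l_k(f^{[w]};w')-{\rm tr}(f;w)\bigr),
\]
where the last equality uses $l_k(f^{[w]};w)={\rm tr}(f;w)$ for all $k\in I$ and the linearity of $l_k$. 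Thus $l_k(f^{[w]};w_\lambda)$ is an affine function of $l_k(f^{[w]};w')$ with the same positive slope $\lambda$ for every $k$. Consequently the index set minimizing $l_k(f^{[w]};\cdot)$ at $w_\lambda$ (when $\lambda>0$) is exactly the one minimizing it at $w'$, so again $i$ and $j$ achieve the minimum and $l_i(f^{[w]};w_\lambda)=l_j(f^{[w]};w_\lambda)\leq l_k(f^{[w]};w_\lambda)$ for all $k\in I$. By Lemma~\ref{trop1}, $w_\lambda\in{\rm Trop}(f^{[w]})$; the case $\lambda=0$ is the hypothesis $w\in{\rm Trop}(f^{[w]})$, already noted in the text.

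The only mild subtlety is bookkeeping about which indices are involved: one must be careful that the inequality $l_k(f^{[w]};w')\ge l_i(f^{[w]};w')$ is only asserted for $k\in I$ (these are the only terms present in $f^{[w]}$), and that is exactly the quantified range one needs for Lemma~\ref{trop1} applied to $f^{[w]}$. There is no issue with ties as $\lambda$ varies since the common slope $\lambda$ multiplies every deviation $l_k(f^{[w]};w')-{\rm tr}(f;w)$ uniformly, so strict inequalities are preserved and equalities are preserved. I expect no real obstacle here; the ``hard part,'' such as it is, is simply recognizing that the defining property of $f^{[w]}$ forces all the relevant affine functions to pass through the single value ${\rm tr}(f;w)$ at $w$, which makes the homothety $w_\lambda$ act linearly on the weight differences and hence makes ${\rm Trop}(f^{[w]})$ a cone with apex $w$.
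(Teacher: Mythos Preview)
Your proof is correct and follows essentially the same approach as the paper: both use that all the linear functions $l_k(f^{[w]};\cdot)$ share the common value ${\rm tr}(f;w)$ at $w$, then transport the (in)equalities at $w'$ to $w+\lambda(w'-w)$ by the affine rescaling $l_k\mapsto {\rm tr}(f;w)+\lambda(l_k-{\rm tr}(f;w))$ and conclude via Lemma~\ref{trop1}. Your version makes the formula for $l_k(f^{[w]};w_\lambda)$ explicit, whereas the paper phrases the same step as ``subtract ${\rm tr}(f;w)$, multiply by $\lambda\geq 0$, add ${\rm tr}(f;w)$''.
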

\begin{proof}
Let $t$ be the number of terms of $f$. Write the lower polynomial of $f$ at $w$ as
 $f^{[w]}=a_{i_1}X^{\alpha_{i_1}}+\cdots+a_{i_r}X^{\alpha_{i_r}}$
where ${1\leq i_1<i_2<\cdots<i_r\leq t}$ are all the indices that minimize the linear functions $l_i(f;w)$.
The $s$-th term of $f^{[w]}$ is the $i_s$-th term of $f$. In particular, we have that
$l_s(f^{[w]};w)=l_{i_s}(f;w)={\rm tr}(f;w)$ for all $s=1,\ldots,r$. Since $w'\in{\rm Trop}(f^{[w]})$ we have, by
Lemma~\ref{trop1}, two indices $1\leq n<m\leq r$ such that $l_n(f^{[w]};w')=l_m(f^{[w]};w')\leq l_s(f^{[w]};w')$
for all $s=1,\ldots,r$.
Subtracting ${\rm tr}(f;w)$, multiplying by $\lambda\geq 0$ and then adding ${\rm tr}(f;w)$ to these (in)equalities we get
$$l_n(f^{[w]};w+\lambda(w'-w))=l_m(f^{[w]};w+\lambda(w'-w))\leq l_s(f^{[w]};w+\lambda(w'-w))$$
for all $s=1,\ldots,r$. This implies, by Lemma~\ref{trop1}, that $w+\lambda(w'-w)$  is in ${\rm Trop}(f^{[w]})$.
\end{proof}

\begin{lemma}\label{trop6}
Let $f\in K\vars$ and let $w\in{\rm Trop}(f)$. Then there exists $\varepsilon>0$ such that
${\rm Trop}(f)\cap B_\varepsilon(w)={\rm Trop}(f^{[w]})\cap B_\varepsilon(w)$.
\end{lemma}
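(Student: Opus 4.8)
The plan is to exploit that, close to $w$, the tropicalization ${\rm tr}(f;\cdot)$ only ``sees'' the terms of $f^{[w]}$, while the remaining terms of $f$ stay strictly above the minimum. Write $f=\sum_{i=1}^{t}a_iX^{\alpha_i}$ and let $I=\{i_1<\cdots<i_r\}\subseteq\{1,\ldots,t\}$ be the set of indices with $l_i(f;w)={\rm tr}(f;w)$, so that $f^{[w]}$ is precisely the sum of the terms of $f$ indexed by $I$. If $I=\{1,\ldots,t\}$ then $f^{[w]}=f$ and there is nothing to prove, so I may assume $\{1,\ldots,t\}\setminus I\neq\emptyset$; then $l_i(f;w)>{\rm tr}(f;w)$ for every $i\notin I$.

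First I would fix $\varepsilon$. Since each $w'\mapsto l_i(f;w')$ is affine, hence continuous, and $\min_{i\notin I}l_i(f;w)>{\rm tr}(f;w)=\min_{j\in I}l_j(f;w)$, there is $\varepsilon>0$ such that
$$\min_{i\notin I}l_i(f;w')>\min_{j\in I}l_j(f;w')\qquad\text{for all }w'\in B_\varepsilon(w).$$
I claim this $\varepsilon$ works. Fix $w'\in B_\varepsilon(w)$. Because the $s$-th term of $f^{[w]}$ is literally the $i_s$-th term of $f$, we have $l_s(f^{[w]};w')=l_{i_s}(f;w')$, hence ${\rm tr}(f^{[w]};w')=\min_{j\in I}l_j(f;w')$; by the choice of $\varepsilon$ this in turn equals $\min_{1\le i\le t}l_i(f;w')={\rm tr}(f;w')$, and the minimum over all of $\{1,\ldots,t\}$ is attained only at indices belonging to $I$. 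Therefore the set of indices of $f$ attaining ${\rm tr}(f;w')$ coincides, under the reindexing $s\leftrightarrow i_s$, with the set of indices of $f^{[w]}$ attaining ${\rm tr}(f^{[w]};w')$.

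In particular, one of these index sets has at least two elements if and only if the other does, so Lemma~\ref{trop1} applied to both $f$ and $f^{[w]}$ yields $w'\in{\rm Trop}(f)\iff w'\in{\rm Trop}(f^{[w]})$. Since $w'\in B_\varepsilon(w)$ was arbitrary, this gives ${\rm Trop}(f)\cap B_\varepsilon(w)={\rm Trop}(f^{[w]})\cap B_\varepsilon(w)$, as desired. (Taking $w'=w$ recovers in passing the remark that $w\in{\rm Trop}(f^{[w]})$ and ${\rm tr}(f;w)={\rm tr}(f^{[w]};w)$.)

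I do not expect a genuine obstacle here: the entire argument is the observation that the strict inequalities $l_i(f;w)>l_j(f;w)$ (for $i\notin I$, $j\in I$) persist on a neighborhood, combined with Lemma~\ref{trop1}. The only point deserving a line of care is that one must separate, \emph{uniformly} over a ball, the non-minimal linear pieces of $f$ from the minimal ones; since there are finitely many terms this is immediate. Together with Lemma~\ref{trop5}, this shows that ${\rm Trop}(f^{[w]})$ is exactly the cone at $w$ over the germ of ${\rm Trop}(f)$ at $w$.
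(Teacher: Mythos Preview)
Your proof is correct and follows essentially the same approach as the paper: define the index set $I$ of terms realizing ${\rm tr}(f;w)$, use continuity of the affine functions $l_i(f;\cdot)$ to separate the $I$-terms from the rest on a ball, and invoke Lemma~\ref{trop1}. The only cosmetic differences are that the paper chooses $\varepsilon$ so that $l_i(f;w')<l_k(f;w')$ for \emph{every} $i\in I$ and $k\notin I$ (a slightly stronger inequality than your $\min_{k\notin I}l_k>\min_{i\in I}l_i$, though both suffice), and then checks the two inclusions separately, whereas you argue once that the minimizing index sets for $f$ and $f^{[w]}$ agree and deduce the equivalence in one stroke.
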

\begin{proof}
Let $t$ be the number of terms of $f$. Let $I=\{1\leq i\leq t\;:\; l_i(f;w)={\rm tr}(f;w)\}$ be the set of indices of the
monomials of $f$ in $f^{[w]}$. Note that $l_i(f;w)<l_k(f;w)$ for all $i\in I$ and $k\not\in I$. Since $l_i(f;\cdot):\R^n\to\R$
are continuous functions, there exists $\varepsilon>0$ such that
\begin{equation}\label{eq1}
l_i(f;w')<l_k(f;w')\qquad\forall\,w'\in B_\varepsilon(w),\;\forall\,i\in I,\;\forall\,k\not\in I.
\end{equation}
Take $w'\in{\rm Trop}(f)\cap B_\varepsilon(w)$. By Lemma~\ref{trop1}, there are indices ${1\leq i<j\leq t}$ such that
$l_i(f;w')=l_j(f;w')\leq l_k(f;w')$ for all $k=1,\ldots,t$. By the inequalities (\ref{eq1}), we conclude that $i,j\in I$.
Therefore, by Lemma~\ref{trop1}, $w'\in{\rm Trop}(f^{[w]})$.

Now take $w'\in{\rm Trop}(f^{[w]})\cap B_\varepsilon(w)$. By Lemma~\ref{trop1} we have two different indices $i,j\in I$ such that
$l_i(f;w')=l_j(f;w')\leq l_k(f;w')$ for all ${k\in I}$. By~(\ref{eq1}), this inequality holds also for $k\not\in I$.
This means, by Lemma~\ref{trop1}, that $w'\in{\rm Trop}(f)$.
\end{proof}

Lemma~\ref{trop1} gives a simple procedure to compute tropical hypersurfaces that requiere to solve systems
of linear equations and inequalities. The following is a simple geometric interpretation of that using
polyhedra.

\begin{defn}\label{def-npoly}
Let $f=\sum_{i=1}^ta_iX^{\alpha_i}\in K\vars$. The Newton Polytope of $f$, denoted ${\rm NP}(f)$, is the convex hull
of the set $$\{(\alpha_i,v(a_i))\,:\,i=1,\ldots,t\}\subseteq\R^{n+1}.$$
A hyperplane $H\subseteq\R^{n+1}$, not parallel to the line $x_1=\cdots=x_n=0$, is a supporting hyperplane of the Newton
Polytope of $f$ if ${\rm NP}(f)$ is included in the upper half-space\footnote{Up and down is understood with
respect to the variable $x_{n+1}$. The upper half-space of $H$ is well-defined since $H$ is not parallel to the vertical axis.}
determined by $H$ and ${\rm NP}(f)\cap H\neq\emptyset$.
\end{defn}

\begin{lemma}\label{trop7}
Let $f\in K\vars$. Then ${\rm Trop}(f)$ is the set of all ${w\in\R^n}$ such that $(w,1)\in\R^{n+1}$ is the
normal vector of a supporting hyperplane $H$ of ${\rm NP}(f)$ with $|H\cap{\rm NP}(f)|>1$.
\end{lemma}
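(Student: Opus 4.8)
The plan is to translate the geometric condition about supporting hyperplanes of $\mathrm{NP}(f)$ into the combinatorial condition of Lemma~\ref{trop1}, thereby connecting the two descriptions of $\mathrm{Trop}(f)$. A hyperplane $H\subseteq\R^{n+1}$ that is not parallel to the vertical axis and has normal vector $(w,1)$ can be written as $\{(x,x_{n+1})\in\R^n\times\R : x_{n+1} + w\cdot x = c\}$ for some constant $c\in\R$. Its upper half-space (with respect to $x_{n+1}$) is then $\{x_{n+1}+w\cdot x \geq c\}$. I would begin by recording this parametrization, and noting that the point $(\alpha_i,v(a_i))$ lies in this half-space exactly when $v(a_i)+w\cdot\alpha_i \geq c$, i.e. when $l_i(f;w)\geq c$, and lies on $H$ itself exactly when $l_i(f;w)=c$.

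First I would show the forward inclusion. Suppose $(w,1)$ is the normal of a supporting hyperplane $H$ of $\mathrm{NP}(f)$ with $|H\cap\mathrm{NP}(f)|>1$; write $H$ via the constant $c$ as above. Since $\mathrm{NP}(f)$ is the convex hull of the points $(\alpha_i,v(a_i))$, the containment $\mathrm{NP}(f)\subseteq\{l_i(f;w)\geq c\}$ forces $l_i(f;w)\geq c$ for every $i$; moreover the minimum over $i$ must actually equal $c$ — otherwise $H$ could be shifted upward and would still support, but then $H\cap\mathrm{NP}(f)$ would be empty, contradicting $\mathrm{NP}(f)\cap H\neq\emptyset$. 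Hence $\mathrm{tr}(f;w)=c$. Finally, the set $H\cap\mathrm{NP}(f)$ is the face of the polytope on which $x_{n+1}+w\cdot x=c$; its vertices are exactly those $(\alpha_i,v(a_i))$ with $l_i(f;w)=c$, and since this face contains more than one point (a polytope face with $>1$ point has $\geq 2$ vertices, the $\alpha_i$ being distinct), there are at least two indices $i<j$ with $l_i(f;w)=l_j(f;w)=c\leq l_k(f;w)$ for all $k$. By Lemma~\ref{trop1}, $w\in\mathrm{Trop}(f)$.

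For the reverse inclusion, let $w\in\mathrm{Trop}(f)$. Set $c=\mathrm{tr}(f;w)=\min_i l_i(f;w)$ and let $H=\{x_{n+1}+w\cdot x=c\}$, which has normal vector $(w,1)$ and is not parallel to the vertical axis. By definition of $c$, every generator $(\alpha_i,v(a_i))$ of $\mathrm{NP}(f)$ satisfies $l_i(f;w)\geq c$, so the whole convex hull lies in the upper half-space of $H$; and the generators attaining equality — of which, by Lemma~\ref{trop1}, there are at least two — lie on $H$, so $H\cap\mathrm{NP}(f)\neq\emptyset$ and in fact contains $\geq 2$ of the (distinct) points $\alpha_i$ in its projection, giving $|H\cap\mathrm{NP}(f)|>1$. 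Thus $H$ is a supporting hyperplane of the required type. The step I expect to require the most care is the forward direction's argument that a supporting hyperplane must have $c=\mathrm{tr}(f;w)$ rather than some smaller value — i.e. ruling out a hyperplane that touches $\mathrm{NP}(f)$ only along a lower-dimensional portion of its boundary not minimizing $l_i$; this is handled cleanly by observing that $H\cap\mathrm{NP}(f)$ must be a genuine face and its supporting linear functional is $x\mapsto x_{n+1}+w\cdot x$, whose values on the vertex set of $\mathrm{NP}(f)$ are precisely the $l_i(f;w)$, so the minimum of the functional over the polytope equals $\min_i l_i(f;w)$.
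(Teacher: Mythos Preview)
Your proposal is correct and follows essentially the same approach as the paper: both directions reduce to Lemma~\ref{trop1} by identifying the linear functional $(w,1)\cdot(\alpha_i,v(a_i))=l_i(f;w)$ and using that a face of a polytope with more than one point contains at least two vertices of the polytope. Your concern about showing $c=\mathrm{tr}(f;w)$ is handled exactly as you indicate (and in the paper is left implicit), so there is no gap.
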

\begin{proof}
Write $f=\sum_{i=1}^ta_iX^{\alpha_i}$. $(\subseteq)$ Take $w\in{\rm Trop}(f)$. By Lemma~\ref{trop1}, there are
two indices $1\leq i<j\leq t$ such that $l_i(f;w)=l_j(f;w)\leq l_k(f;w)$ for all $k=1,\ldots,t$.
This is equivalent to say that the hyperplane $$H=\{x\in\R^{n+1}\,:\,(w,1)\cdot x={\rm tr}(f;w)\},$$
with normal vector $(w,1)$, contains the points $(\alpha_i,v(a_i))$ and $(\alpha_j,v(a_j))$, and the upper
half-space $H^+$ determined by $H$ contains all the points $(\alpha_k,v(a_k))$. Since $H^+$ is convex, then
${\rm NP}(f)\subseteq H^+$. $(\supseteq)$ Now assume that $H$ is a supporting hyperplane with normal vector
$(w,1)$ that contains at least two points of the Newton Polytope of $f$. Since ${\rm NP}(f)$ is a polyhedron,
then $H$ contains at least two vertices $(\alpha_i,v(a_i))$ and $(\alpha_j,v(a_j))$. The remaining vertices
are contained in the upper half-space determined by $H$. This means that $\alpha_i\cdot w+ v(a_i)=
\alpha_j\cdot w+v(a_j)\leq \alpha_k\cdot w+v(a_k)$ for all $k=1,\ldots,t$, and by Lemma~\ref{trop1}, that
$w\in{\rm Trop}(f)$.
\end{proof}

In the case of an univariate polynomial $f\in K[X]$, Lemma~\ref{trop7} says that ${\rm Trop}(f)$ is
the set of minus the slope of the segments of the lower hull of ${\rm NP}(f)$.

\section{Semiregular systems of polynomial equations.}\label{sec-semi}

\begin{defn}\label{def-sys}
Consider a system $F$ of $n$ equations in $n$ variables.
$$F=\left\{\begin{array}{c}f_1(X_1,\ldots,X_n)=0 \\ \vdots \\ f_n(X_1,\ldots,X_n)=0\end{array}\right.$$
The equations are given by non-zero polynomials in $K\vars$ and the unknowns are in $K^{\ast}$. The system $F$
will be written $(f_1,\ldots,f_n)$ in order to simplify the notation.
We define the tropical prevariety ${\rm Trop}(F)$ induced by $F$ as
$${\rm Trop}(F)={\rm Trop}(f_1)\cap\cdots\cap {\rm Trop}(f_n).$$
For any $w\in {\rm Trop}(F)$ we denote by $F^{[w]}$ and ${\rm in}_w(F)$ the systems of polynomial equations given by
the lower polynomials $f_1^{[w]},\ldots,f_n^{[w]}$ and the initial forms ${\rm in}_w(f_1),\ldots,{\rm in}_w(f_n)$
respectively.
\end{defn}

By Proposition~\ref{trop2}, any solution $x\in(K^{\ast})^n$ of $F$ satisfies $v(x)\in{\rm Trop}(F)$.

\begin{lemma}\label{semi1}
Let $F$ be a system of $n$ polynomials in $K\vars$.
If~$w$ is an isolated point of ${\rm Trop}(F)$, then ${\rm Trop}(F^{[w]})=\{w\}$ and all the solutions
$x\in(K^\ast)^n$ of $F^{[w]}$ have valuation vector $v(x)=w$.
\end{lemma}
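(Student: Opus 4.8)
The plan is to first prove that ${\rm Trop}(F^{[w]})=\{w\}$, and then to read off the statement about solutions from Proposition~\ref{trop2}. Since $w$ is isolated in ${\rm Trop}(F)$ it in particular belongs to ${\rm Trop}(F)$, hence $w\in{\rm Trop}(f_i)$ and therefore $w\in{\rm Trop}(f_i^{[w]})$ for every $i$ (by the remark preceding Lemma~\ref{trop5}); thus $w\in{\rm Trop}(F^{[w]})$ and it remains to show there is nothing else. Applying Lemma~\ref{trop6} to each $f_i$ yields radii $\varepsilon_i>0$ with ${\rm Trop}(f_i)\cap B_{\varepsilon_i}(w)={\rm Trop}(f_i^{[w]})\cap B_{\varepsilon_i}(w)$; intersecting over $i$ and taking $\varepsilon$ to be the minimum of the $\varepsilon_i$ (further shrunk so that, by isolation, ${\rm Trop}(F)\cap B_\varepsilon(w)=\{w\}$) we obtain ${\rm Trop}(F^{[w]})\cap B_\varepsilon(w)=\{w\}$.

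Next I would globalize this using the cone structure of the lower polynomials. Let $w'\in{\rm Trop}(F^{[w]})$, so $w'\in{\rm Trop}(f_i^{[w]})$ for all $i$. By Lemma~\ref{trop5}, the ray $\{w+\lambda(w'-w):\lambda\geq 0\}$ is contained in ${\rm Trop}(f_i^{[w]})$ for every $i$, hence in ${\rm Trop}(F^{[w]})$. If $w'\neq w$, then for small $\lambda>0$ the point $w+\lambda(w'-w)$ is a point of $B_\varepsilon(w)$ distinct from $w$, contradicting the previous paragraph. Therefore $w'=w$ and ${\rm Trop}(F^{[w]})=\{w\}$.

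Finally, if $x\in(K^\ast)^n$ satisfies $f_i^{[w]}(x)=0$ for all $i$, then Proposition~\ref{trop2} applied to each $f_i^{[w]}\in K\vars$ gives $v(x)\in{\rm Trop}(f_i^{[w]})$, hence $v(x)\in{\rm Trop}(F^{[w]})=\{w\}$, i.e. $v(x)=w$, as desired.

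The only mildly delicate point is the passage from the local agreement of tropical sets near $w$ (Lemma~\ref{trop6}) to the global equality ${\rm Trop}(F^{[w]})=\{w\}$; this is exactly where the ray invariance of Lemma~\ref{trop5} is needed, so that a putative extra point of ${\rm Trop}(F^{[w]})$ would drag a whole ray into the ball $B_\varepsilon(w)$ and contradict isolation. Everything else is a direct application of results already proved in Section~\ref{sec-trop}.
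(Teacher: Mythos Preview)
Your proof is correct and follows essentially the same route as the paper: use Lemma~\ref{trop6} to match ${\rm Trop}(F)$ and ${\rm Trop}(F^{[w]})$ near $w$, use the cone property of Lemma~\ref{trop5} to globalize, and finish with Proposition~\ref{trop2}. Your version is simply more explicit about intersecting over the $f_i$ and shrinking $\varepsilon$, but the argument is the same.
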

\begin{proof}
By Lemma~\ref{trop6}, the tropical prevarieties ${\rm Trop}(F)$ and ${\rm Trop}(F^{[w]})$ coincide in a neighborhood of $w$.
In particular, there exists $\varepsilon>0$ such that ${\rm Trop}(F^{[w]})\cap B_\varepsilon(w)=\{w\}$. On the other
hand, by Lemma~\ref{trop5}, the tropical prevariety ${\rm Trop}(F^{[w]})$ is a cone centered at $w$. This implies that
${\rm Trop}(F^{[w]})=\{w\}$. Therefore, by Proposition~\ref{trop2}, all the solutions $x\in(K^\ast)^n$ of $F^{[w]}$
have valuation vector $v(x)=w$.
\end{proof}

\begin{defn}\label{def-sreg}
Consider a system $F=(f_1,\ldots,f_n)$ of $n$ polynomials in $K\vars$, and let $w\in\R^n$.
We say that $F$ is semiregular at $w$ if either ${w\not\in{\rm Trop}(F)\cap v(\pi)\Z^n}$ or
${\rm in}_w(F)$ has no degenerate zero in $(k^\ast)^n$.
We say that $F$ is normalized at $w$ if ${\rm tr}(f_1;w)=\cdots={\rm tr}(f_n;w)=0$.
\end{defn}

\begin{lemma}\label{semiy}
Let $F=(f_1,\ldots,f_n)$ be a system of $n$ polynomials in $K\vars$ semiregular at $w\in\R^n$.
Then, for each zero $x\in(K^\ast)^n$ of $F$ with $v(x)=w$, we have $$v({\rm Jac}(F)(x))={\rm tr}(f_1;w)+\cdots+{\rm tr}(f_n;w)-(w_1+\cdots+w_n).$$
\end{lemma}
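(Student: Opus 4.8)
The plan is to reduce the identity to the non-vanishing of the Jacobian determinant of the initial form system at $\delta(x)$, and then to bookkeep the powers of $\pi$ through a diagonal factorization of the Jacobian matrix. Throughout, ${\rm Jac}(F)$ denotes the Jacobian determinant $\det\left(\frac{\partial f_i}{\partial X_j}\right)_{1\le i,j\le n}$.

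First I would check that the hypothesis of semiregularity is forced into its second alternative. Since $x\in(K^\ast)^n$ is a zero of $F$ with $v(x)=w$, Proposition~\ref{trop2} applied to each $f_i$ gives $w\in{\rm Trop}(F)$; moreover $w=v(x)$ lies in $v(\pi)\Z^n$ because the value group of $v$ is $v(\pi)\Z$. Hence $w\in{\rm Trop}(F)\cap v(\pi)\Z^n$, so semiregularity at $w$ means precisely that ${\rm in}_w(F)$ has no degenerate zero in $(k^\ast)^n$. Next I would exhibit $\delta(x)$ as such a zero: applying Lemma~\ref{tropz}(2) to each $f_i$ and using $f_i(x)=0$ gives $\pi^{-{\rm tr}(f_i;w)/v(\pi)}f_i(x)=0\equiv{\rm in}_w(f_i)(\delta(x))\bmod\M$, so $\delta(x)\in(k^\ast)^n$ is a zero of ${\rm in}_w(F)$; being non-degenerate, it satisfies $\det\left({\rm Jac}({\rm in}_w(F))(\delta(x))\right)\neq 0$ in $k$.

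Now for the computation. Let $M=(M_{ij})$ with $M_{ij}=\pi^{(w_j-{\rm tr}(f_i;w))/v(\pi)}\frac{\partial f_i}{\partial X_j}(x)$. By Lemma~\ref{tropz}(3) every entry of $M$ lies in $A$, and by Lemma~\ref{tropz}(4) the reduction of $M$ modulo $\M$ equals the matrix ${\rm Jac}({\rm in}_w(F))(\delta(x))$. Since reduction modulo $\M$ is a ring homomorphism, it commutes with the determinant, so $\det(M)\bmod\M=\det\left({\rm Jac}({\rm in}_w(F))(\delta(x))\right)\neq 0$; hence $\det(M)\in A^\ast$ and $v(\det(M))=0$. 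On the other hand, $M=D_1\,J\,D_2$, where $D_1$ is the $n\times n$ diagonal matrix with $i$-th diagonal entry $\pi^{-{\rm tr}(f_i;w)/v(\pi)}$, $J=\left(\frac{\partial f_i}{\partial X_j}(x)\right)_{i,j}$, and $D_2$ is the diagonal matrix with $j$-th diagonal entry $\pi^{w_j/v(\pi)}$. Multiplicativity of the determinant then gives
$$\det(M)=\pi^{(\sum_{j=1}^n w_j-\sum_{i=1}^n{\rm tr}(f_i;w))/v(\pi)}\,{\rm Jac}(F)(x),$$
and since $v(\pi^{a/v(\pi)})=a$, taking valuations and using $v(\det(M))=0$ yields $v({\rm Jac}(F)(x))={\rm tr}(f_1;w)+\cdots+{\rm tr}(f_n;w)-(w_1+\cdots+w_n)$, as claimed.

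The only genuinely delicate step is the first reduction: one must be certain that the existence of a zero $x$ with $v(x)=w$ rules out the first alternative in the definition of semiregularity, so that we are legitimately in the non-degenerate case. Everything after that is routine bookkeeping; the factorization $M=D_1JD_2$ is precisely what makes the valuation count transparent, and I would only take care to verify the signs of the $\pi$-exponents against the normalizations in Lemma~\ref{tropz}(3)--(4).
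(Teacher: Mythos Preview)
Your proposal is correct and follows essentially the same approach as the paper: both use Lemma~\ref{tropz} to see that the rescaled Jacobian matrix $\bigl(\pi^{(w_j-{\rm tr}(f_i;w))/v(\pi)}\,\partial f_i/\partial X_j(x)\bigr)_{i,j}$ has entries in $A$ and reduces modulo $\M$ to the Jacobian of ${\rm in}_w(F)$ at $\delta(x)$, whose determinant is nonzero by semiregularity, and then extract the powers of $\pi$. Your explicit factorization $M=D_1JD_2$ is simply a cleaner way of phrasing the paper's ``factoring out the powers of $\pi$ of the determinant''.
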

\begin{proof}
In the case $w\not\in{\rm Trop}(F)\cap v(\pi)\Z^n$, there are no zeros of $F$ with valuation $w$, and there is nothing to prove. Therefore,
we can assume without loss of generality that $w\in{\rm Trop}(F)\cap v(\pi)\Z^n$. Take a zero $x\in(K^\ast)^n$ of $F$ with valuation $v(x)=w$.
By Lemma~\ref{tropz}, the point $\delta(x)\in(k^\ast)^n$ is a zero of ${\rm im}_w(f)$, and then, by the semiregularity of $F$ at $w$, we
have ${\left.\det\left(\frac{\partial{\rm in}_w(f_i)}{\partial X_j}\right)\right|_{\delta(x)}\neq 0}$. Again by Lemma~\ref{tropz}, this means that
$\det\left(\pi^{w_j-{\rm tr}(f_i)}\left.\frac{\partial f_i}{\partial X_j}\right|_x\right)\not\equiv 0\bmod{\M}$, and by factoring
out the powers of $\pi$ of the determinant, we conclude that $v({\rm Jac}(F)(x))={\rm tr}(f_1;w)+\cdots+{\rm tr}(f_n;w)-(w_1+\cdots+w_n)$.
\end{proof}

The following three lemmas show how semiregularity behaves with respect to a rescaling of variables and multiplication by monomials.

\begin{lemma}\label{semi2}
Let $F=(f_1,\ldots,f_n)$ be a system of $n$ polynomials in $K\vars$.
Let $w\in\R^n$, $a_1,\ldots,a_n\in K^\ast$, and $\alpha_1,\ldots,\alpha_n\in\Z^n$. Then $F$ is semiregular
at $w$ if and only if $\tilde{F}=(a_1X^{\alpha_1}f_1,\ldots,a_nX^{\alpha_n}f_n)$ is semiregular at $w$.
\end{lemma}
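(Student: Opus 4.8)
The plan is to reduce the claim to the corresponding statements about tropical hypersurfaces already proved in Lemma~\ref{trop4}, applied coordinatewise. First I would observe that by Definition~\ref{def-sreg}, semiregularity of $F$ at $w$ is the disjunction of two conditions: either $w\notin{\rm Trop}(F)\cap v(\pi)\Z^n$, or ${\rm in}_w(F)$ has no degenerate zero in $(k^\ast)^n$. I will show that each of these conditions holds for $F$ if and only if it holds for $\tilde F$. For the first: by Lemma~\ref{trop4}, item~2, we have ${\rm Trop}(a_iX^{\alpha_i}f_i)={\rm Trop}(f_i)$ for each $i$, hence ${\rm Trop}(\tilde F)=\bigcap_i{\rm Trop}(a_iX^{\alpha_i}f_i)=\bigcap_i{\rm Trop}(f_i)={\rm Trop}(F)$. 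Since the lattice $v(\pi)\Z^n$ is unchanged, $w\notin{\rm Trop}(F)\cap v(\pi)\Z^n$ if and only if $w\notin{\rm Trop}(\tilde F)\cap v(\pi)\Z^n$, so in that case both $F$ and $\tilde F$ are semiregular at $w$ and there is nothing more to check.

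Next I would treat the remaining case $w\in{\rm Trop}(F)\cap v(\pi)\Z^n={\rm Trop}(\tilde F)\cap v(\pi)\Z^n$, where I must compare the zero sets of ${\rm in}_w(F)$ and ${\rm in}_w(\tilde F)$ together with the nondegeneracy (Jacobian nonvanishing) condition. By Lemma~\ref{trop4}, item~4, ${\rm in}_w(a_iX^{\alpha_i}f_i)=\delta(a_i)X^{\alpha_i}\,{\rm in}_w(f_i)$ for each $i$. Since $a_i\in K^\ast$ forces $\delta(a_i)\in k^\ast$, and since we only look for zeros in $(k^\ast)^n$ where the monomial factor $X^{\alpha_i}$ is a unit, multiplying ${\rm in}_w(f_i)$ by the nonzero monomial $\delta(a_i)X^{\alpha_i}$ does not change its zero set inside $(k^\ast)^n$; hence the zero sets of ${\rm in}_w(F)$ and ${\rm in}_w(\tilde F)$ in $(k^\ast)^n$ coincide. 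It remains to check that a common zero $\varepsilon\in(k^\ast)^n$ is degenerate for ${\rm in}_w(F)$ if and only if it is degenerate for ${\rm in}_w(\tilde F)$, i.e.\ that the two Jacobian determinants vanish simultaneously at $\varepsilon$.

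For the Jacobian comparison I would use the product rule. Writing $g_i={\rm in}_w(f_i)$ and $\tilde g_i=\delta(a_i)X^{\alpha_i}g_i$, we have
$$\frac{\partial\tilde g_i}{\partial X_j}=\delta(a_i)\alpha_{i,j}X^{\alpha_i-e_j}g_i+\delta(a_i)X^{\alpha_i}\frac{\partial g_i}{\partial X_j}.$$
Evaluating at a common zero $\varepsilon\in(k^\ast)^n$ the first summand vanishes because $g_i(\varepsilon)=0$, so
$$\left.\frac{\partial\tilde g_i}{\partial X_j}\right|_\varepsilon=\delta(a_i)\,\varepsilon^{\alpha_i}\left.\frac{\partial g_i}{\partial X_j}\right|_\varepsilon.$$
Thus the Jacobian matrix of $\tilde g$ at $\varepsilon$ is obtained from that of $g$ at $\varepsilon$ by multiplying the $i$-th row by the scalar $\delta(a_i)\varepsilon^{\alpha_i}\in k^\ast$, whence ${\rm Jac}({\rm in}_w(\tilde F))(\varepsilon)=\bigl(\prod_i\delta(a_i)\varepsilon^{\alpha_i}\bigr)\,{\rm Jac}({\rm in}_w(F))(\varepsilon)$, and this determinant vanishes for one system exactly when it vanishes for the other. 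Combining: $\varepsilon$ is a degenerate zero of ${\rm in}_w(F)$ iff it is a degenerate zero of ${\rm in}_w(\tilde F)$, so ${\rm in}_w(F)$ has a degenerate zero in $(k^\ast)^n$ iff ${\rm in}_w(\tilde F)$ does. Together with the first case this proves the equivalence.

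I do not anticipate a serious obstacle here; the only point requiring mild care is the bookkeeping that the monomial prefactors $\delta(a_i)X^{\alpha_i}$ are genuinely units on the torus $(k^\ast)^n$ (which is why the argument is restricted to nonzero coordinates), so that both the zero set and the degeneracy locus are preserved. Everything else is a direct application of Lemma~\ref{trop4} and the product rule, and the symmetry of the statement (the construction $F\mapsto\tilde F$ is invertible via $a_i^{-1}X^{-\alpha_i}$) makes the ``if and only if'' automatic once one direction is established.
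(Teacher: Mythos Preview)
Your proof is correct and follows essentially the same approach as the paper: both use Lemma~\ref{trop4}, items~2 and~4, to identify ${\rm Trop}(F)={\rm Trop}(\tilde F)$ and ${\rm in}_w(\tilde f_i)=\delta(a_i)X^{\alpha_i}{\rm in}_w(f_i)$, then apply the product rule to compare the Jacobians at a common zero, obtaining ${\rm Jac}({\rm in}_w(\tilde F))(\varepsilon)=\delta(a_1\cdots a_n)\varepsilon^{\alpha_1+\cdots+\alpha_n}{\rm Jac}({\rm in}_w(F))(\varepsilon)$. The only cosmetic difference is that you evaluate at $\varepsilon$ before taking the determinant (killing the $g_i$-term in each entry), while the paper writes the full determinant expression first and then evaluates; the substance is the same.
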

\begin{proof}
By the item 2 of Lemma~\ref{trop4}, we have that ${\rm Trop}(F)={\rm Trop}(\tilde{F})$, and since the claim is symmetric,
it is enough to prove that when $w\in{\rm Trop}(F)\cap v(\pi)\Z^n$ and ${\rm in}_w(F)$ has no degenerate zero in $(k^\ast)^n$
then also ${\rm in}_w(\tilde{F})$ has no degenerate zero. By the item 4 of Lemma~\ref{trop4}, we have that
${\rm in}_w(\tilde{F})=(\delta(a_1)X^{\alpha_1}{\rm in}_w(f_1),\ldots,\delta(a_n)X^{\alpha_n}{\rm in}_w(f_n))$, and
in particular, ${\rm in}_w(F)$ and ${\rm in}_w(\tilde{F})$ have the same zeros in $(k^\ast)$. Let $x\in(k^\ast)^n$ be
one of these zeros, which by assumption is a non-degenerate zero of ${\rm in}_w(F)$. We have to show that $x$ is also
a non-degenerate zero of ${\rm in}_w(\tilde{F})$. The Jacobian of ${\rm in}_w(\tilde{F})$ is given by the following expression.
$${\rm Jac}({\rm in}_w(\tilde{F}))=\det\left(\delta(a_i)\alpha_{ij}X^{\alpha_i-e_j}{\rm in}_w(f_i)+\delta(a_i)X^{\alpha_i}\frac{\partial{\rm in}_w(f_i)}{\partial X_j}\right)_{1\leq i,j\leq n}$$
Evaluating at $X=x$ we get ${\rm Jac}({\rm in}_w(\tilde{F}))(x)=\delta(a_1\cdots a_n)x^{\alpha_1+\ldots+\alpha_n}{\rm Jac}({\rm in}_w(F))(x)$, which is
not zero in $k^\ast$, since $x$ is a non-degenerate zero of ${\rm in}_w(F)$.
\end{proof}

\begin{lemma}\label{semi3}
Let $F=(f_1,\ldots,f_n)$ be a system of $n$ polynomials in $K\vars$.
Let $w\in\R^n$ and $b=(b_1,\ldots,b_n)\in(K^\ast)^n$. Then $F$ is semiregular at $w$ if and only if the system
with rescaled variables $\tilde{F}=(f_1(b_1X_1,\ldots,b_nX_n),\ldots,f_n(b_1X_1,\ldots,b_nX_n))$ is semiregular at $w-v(b)$.
\end{lemma}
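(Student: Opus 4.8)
The plan is to mirror the proof of Lemma~\ref{semi2}, replacing the use of items~2 and~4 of Lemma~\ref{trop4} by items~6 and~8. First I would observe that since the valuation $v$ is discrete with uniformizer $\pi$, every $v(b_j)$ lies in $v(\pi)\Z$, so $v(b)\in v(\pi)\Z^n$ and hence $w\in v(\pi)\Z^n$ if and only if $w-v(b)\in v(\pi)\Z^n$. Combined with item~6 of Lemma~\ref{trop4}, which gives ${\rm Trop}(\tilde F)={\rm Trop}(F)-v(b)$, this shows that $w\in{\rm Trop}(F)\cap v(\pi)\Z^n$ if and only if $w-v(b)\in{\rm Trop}(\tilde F)\cap v(\pi)\Z^n$. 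So the case in which neither point lies in its tropical prevariety is immediate, and since the statement is an ``if and only if'', it suffices to treat the case $w\in{\rm Trop}(F)\cap v(\pi)\Z^n$ and to show that ${\rm in}_w(F)$ has a degenerate zero in $(k^\ast)^n$ if and only if ${\rm in}_{w-v(b)}(\tilde F)$ does.

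Next I would apply item~8 of Lemma~\ref{trop4}, with $w$ there replaced by $w-v(b)$, to obtain
$${\rm in}_{w-v(b)}(\tilde F)={\rm in}_w(F)(\delta(b_1)X_1,\ldots,\delta(b_n)X_n),$$
that is, the initial form system of $\tilde F$ at $w-v(b)$ is obtained from that of $F$ at $w$ by the monomial change of variables $X_j\mapsto\delta(b_j)X_j$, where $\delta(b_j)\in k^\ast$. Since this substitution is an automorphism of the torus $(k^\ast)^n$, it carries zeros of ${\rm in}_w(F)$ bijectively onto zeros of ${\rm in}_{w-v(b)}(\tilde F)$: if $x$ is a zero of the latter, then $(\delta(b_1)x_1,\ldots,\delta(b_n)x_n)$ is a zero of the former.

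The remaining point is that this bijection preserves (non)degeneracy. Writing $G=(g_1,\ldots,g_n)={\rm in}_w(F)$ and $c_j=\delta(b_j)\in k^\ast$, the chain rule gives $\frac{\partial}{\partial X_j}g_i(c_1X_1,\ldots,c_nX_n)=c_j\,(\partial g_i/\partial X_j)(c_1X_1,\ldots,c_nX_n)$, so factoring $c_j$ out of the $j$-th column of the Jacobian matrix yields
$${\rm Jac}({\rm in}_{w-v(b)}(\tilde F))(x)=\delta(b_1\cdots b_n)\,{\rm Jac}({\rm in}_w(F))(c_1x_1,\ldots,c_nx_n).$$
Since $\delta(b_1\cdots b_n)\neq 0$ in $k^\ast$, the left-hand side vanishes exactly when ${\rm Jac}({\rm in}_w(F))$ vanishes at $(c_1x_1,\ldots,c_nx_n)$; hence $x$ is a degenerate zero of ${\rm in}_{w-v(b)}(\tilde F)$ if and only if $(c_1x_1,\ldots,c_nx_n)$ is a degenerate zero of ${\rm in}_w(F)$. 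As $x\mapsto(c_1x_1,\ldots,c_nx_n)$ is a bijection of $(k^\ast)^n$, one system has a degenerate zero if and only if the other does, and this completes the argument.

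I do not expect any serious obstacle: the content is entirely the compatibility of the initial form construction with torus automorphisms (Lemma~\ref{trop4}) together with a routine chain-rule computation on the residue field. The only points needing a little care are noting that $v(b)\in v(\pi)\Z^n$ so that the lattice condition $w\in v(\pi)\Z^n$ transfers correctly, and keeping straight the index shift in item~8 of Lemma~\ref{trop4} (it must be applied at $w-v(b)$, not at $w$).
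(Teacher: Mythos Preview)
Your proposal is correct and follows essentially the same route as the paper's proof: both use items~6 and~8 of Lemma~\ref{trop4} to identify ${\rm Trop}(\tilde F)={\rm Trop}(F)-v(b)$ and ${\rm in}_{w-v(b)}(\tilde F)={\rm in}_w(F)(\delta(b)X)$, and then the chain-rule identity ${\rm Jac}({\rm in}_{w-v(b)}(\tilde F))(x)=\delta(b_1\cdots b_n)\,{\rm Jac}({\rm in}_w(F))(\delta(b)x)$ to match degenerate zeros. The only cosmetic difference is that the paper invokes the symmetry of the statement and proves one implication, whereas you establish the bijection of degenerate zeros directly; your handling of the lattice condition $v(b)\in v(\pi)\Z^n$ is also slightly more explicit.
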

\begin{proof}
By the item 6 of Lemma~\ref{trop4}, we have that ${\rm Trop}(\tilde{F})={\rm Trop}(F)-v(b)$. Since $v(b)\in v(\pi)\Z^n$, then
$w\in{\rm Trop}(F)\cap v(\pi)\Z^n$ if and only if $w\in{\rm Trop}(\tilde{F})\cap v(\pi)\Z^n$. By the symmetry of the claim,
it is enough to show that when $w\in{\rm Trop}(F)\cap v(\pi)\Z^n$ and ${\rm in}_w(F)$ has no degenerate zero in $(k^\ast)^n$, then
also ${\rm in}_{w-v(b)}(\tilde{F})$ has no degenerate zero. By the item 8 of Lemma~\ref{trop4}, we have that
${\rm in}_{w-v(b)}(\tilde{F})=({\rm in}_w(f_1)(\delta(b)X),\ldots,{\rm in}_w(f_n)(\delta(b)X))$, and in particular,
if $x\in(k^\ast)^n$ is a zero of ${\rm in}_{w-v(b)}(\tilde{F})$, then $y=\delta(b)x$ is a zero of ${\rm in}_w(F)$. A simple
computation using the chain rule shows that ${\rm Jac}({\rm in}_{w-v(b)}(\tilde{F}))(x)=\delta(b_1\cdots b_n){\rm Jac}({\rm in}_w(F))(y)$. Since
the right hand side does not vanish at any zero $y$ of ${\rm in}_w(F)$, then the zeros of ${\rm in}_{w-v(b)}(\tilde{F})$ are
all non-degenerate.
\end{proof}

\begin{lemma}\label{semi5}
Let $F$ be a system of $n$ polynomials in $K\vars$ and let $w\in{\rm Trop}(F)$. Then $F$ is semiregular
(resp. normalized) at $w$ if and only if $F^{[w]}$ is semiregular (resp. normalized) at~$w$.
\end{lemma}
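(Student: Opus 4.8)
The plan is to reduce the statement about $F$ to the already-established behavior of tropicalizations and initial forms under the $(\cdot)^{[w]}$ operation, using the key observation that for any single polynomial $f$ with $w\in{\rm Trop}(f)$, the lower polynomial $f^{[w]}$ is unchanged by passing to $(f^{[w]})^{[w]}$, and moreover ${\rm in}_w(f^{[w]})={\rm in}_w(f)$. Indeed, the terms of $f$ achieving the minimum $w$-weight ${\rm tr}(f;w)$ are exactly the terms of $f^{[w]}$, and these same terms achieve the minimum $w$-weight for $f^{[w]}$ as well (with the same value ${\rm tr}(f^{[w]};w)={\rm tr}(f;w)$, since the set we take the minimum over only shrank to exactly those attaining it). Applying this coordinatewise gives ${\rm in}_w(F^{[w]})={\rm in}_w(F)$ and ${\rm tr}(f_i^{[w]};w)={\rm tr}(f_i;w)$ for all $i$.

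From here both equivalences follow immediately. For normalization: $F$ normalized at $w$ means ${\rm tr}(f_i;w)=0$ for all $i$, which by the identity ${\rm tr}(f_i^{[w]};w)={\rm tr}(f_i;w)$ is equivalent to ${\rm tr}(f_i^{[w]};w)=0$ for all $i$, i.e.\ $F^{[w]}$ normalized at $w$. For semiregularity: first note $w\in{\rm Trop}(F)$ by hypothesis, and $w\in{\rm Trop}(F^{[w]})$ as well (since $w\in{\rm Trop}(f_i)$ implies $w\in{\rm Trop}(f_i^{[w]})$, as remarked in the excerpt before Lemma~\ref{trop5}). So the first alternative in the definition of semiregularity (that $w\notin{\rm Trop}(\cdot)\cap v(\pi)\Z^n$) holds for $F$ if and only if it holds for $F^{[w]}$ — namely, it holds for neither if $w\in v(\pi)\Z^n$, and for both if $w\notin v(\pi)\Z^n$. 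In the case $w\in v(\pi)\Z^n$, semiregularity of $F$ at $w$ reduces to ${\rm in}_w(F)$ having no degenerate zero in $(k^\ast)^n$, and semiregularity of $F^{[w]}$ at $w$ reduces to ${\rm in}_w(F^{[w]})$ having no degenerate zero; since ${\rm in}_w(F^{[w]})={\rm in}_w(F)$, these conditions are literally identical.

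The one point requiring a small argument is the claimed identity ${\rm in}_w(f^{[w]})={\rm in}_w(f)$ and ${\rm tr}(f^{[w]};w)={\rm tr}(f;w)$; everything else is bookkeeping with the definitions. This is genuinely routine — it is the observation that ``taking the lower terms twice is the same as taking them once,'' which is visible directly from Definition~\ref{defn-lp} — so I do not anticipate any real obstacle. If desired one can phrase it via Lemma~\ref{trop4} is not directly applicable, so I would simply spell out the term-index argument: write $f=\sum_{i=1}^t a_iX^{\alpha_i}$, let $I=\{i:l_i(f;w)={\rm tr}(f;w)\}$, so that $f^{[w]}=\sum_{i\in I}a_iX^{\alpha_i}$; then for $i\in I$ we have $l_i(f^{[w]};w)=l_i(f;w)={\rm tr}(f;w)$, and since ${\rm tr}(f^{[w]};w)=\min_{i\in I}l_i(f;w)={\rm tr}(f;w)$, the set of minimizing indices for $f^{[w]}$ at $w$ is again all of $I$, whence $(f^{[w]})^{[w]}=f^{[w]}$ and ${\rm in}_w(f^{[w]})=\sum_{i\in I}\delta(a_i)X^{\alpha_i}={\rm in}_w(f)$. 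Applying this to each $f_i$ completes the proof.
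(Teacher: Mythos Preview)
Your proof is correct and follows essentially the same approach as the paper's own proof, which simply invokes the identities ${\rm tr}(f_i^{[w]};w)={\rm tr}(f_i;w)$ and ${\rm in}_w(F^{[w]})={\rm in}_w(F)$ and declares both equivalences immediate. You have merely spelled out in more detail why these identities hold (the term-index argument) and unpacked the two cases $w\in v(\pi)\Z^n$ and $w\notin v(\pi)\Z^n$ in the semiregularity definition, which the paper leaves implicit.
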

\begin{proof}
The claim that $F$ is normalized at $w$ if and only $F^{[w]}$ is normalized at $w$ follows from the fact
that ${\rm tr}(f_i;w)={\rm tr}(f_i^{[w]};w)$ for all $i=1,\ldots,n$.
The claim about semiregularity is immediate from ${\rm in}_w(F)={\rm in}_w(F^{[w]})$.
\end{proof}

At this point we have all the necessary ingredients for the main result of this section, which is a reformulation
of Hensel's Lemma in the language of Definition~\ref{def-sreg}. For pedagogical reasons, we start with the classical
statement, and then, we reformulate it progressively until we arrive to the final version in Corollary~\ref{semi8}.

\begin{lemma}[Hensel]\label{hensel}
Let $F$ be a system of $n$ polynomials in $A\vars$ and denote by $\overline{F}$ the system reduced modulo $\M$.
Let $\overline{x}\in(k^\ast)^n$ be a solution of $\overline{F}$ such that ${\rm Jac}(\overline{F})(\overline{x})\neq 0$.
Then there exists a unique solution $x\in (A\setminus\M)^n$ of $F$ such that $\overline{x}=x\mod\M$.
\end{lemma}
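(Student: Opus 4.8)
The plan is to prove this by multivariate Newton iteration, exploiting the completeness of $K$; this is the classical Newton--Kantorovich argument specialized to a discrete valuation. Fix an arbitrary lift $x_0\in(A\setminus\M)^n$ of $\overline{x}$, i.e. $x_0\equiv\overline{x}\bmod\M$. Since evaluating a polynomial of $A\vars$ at a point of $(A\setminus\M)^n$ (all coordinates units) lands in $A$, we have $F(x_0)\in A^n$, and because $\overline{F}(\overline{x})=0$ we get $F(x_0)\in\M^n$. Likewise the Jacobian matrix $J(x_0)=\left(\frac{\partial f_i}{\partial X_j}(x_0)\right)_{i,j}$ has entries in $A$ and reduces modulo $\M$ to the Jacobian of $\overline{F}$ at $\overline{x}$, whose determinant is ${\rm Jac}(\overline{F})(\overline{x})\neq 0$ in $k$. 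Hence $\det J(x_0)\in A^\ast$, so $J(x_0)^{-1}$ exists and has entries in $A$ (Cramer's rule).

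Next I would set up the iteration $x_{m+1}=x_m-J(x_m)^{-1}F(x_m)$ and show by induction that, for every $m\geq 0$: $x_m\in(A\setminus\M)^n$, $\det J(x_m)\in A^\ast$, and $F(x_m)\in(\M^{2^m})^n$ (writing $\M^r=\pi^r A$). The base case is the previous paragraph. For the inductive step, since $J(x_m)^{-1}$ has entries in $A$ and $F(x_m)\in(\M^{2^m})^n$, the correction $h:=x_{m+1}-x_m$ lies in $(\M^{2^m})^n\subseteq\M^n$; hence $x_{m+1}\equiv x_m\equiv\overline{x}\bmod\M$, so $x_{m+1}\in(A\setminus\M)^n$ and $\det J(x_{m+1})\equiv\det J(x_m)\not\equiv 0\bmod\M$. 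The quadratic gain comes from an exact expansion of each monomial $X^\alpha$ at $x_m+h$: for negative exponents one uses the convergent series $(x_{m,j}+h_j)^{-1}=x_{m,j}^{-1}\sum_{\ell\geq 0}(-h_j/x_{m,j})^\ell$, legitimate since $h_j/x_{m,j}\in\M$. This yields $f_i(x_m+h)=f_i(x_m)+\sum_j\frac{\partial f_i}{\partial X_j}(x_m)\,h_j+R_i$, where $R_i$ is an $A$-linear combination of monomials in $h_1,\dots,h_n$ of total degree $\geq 2$; since $h\in(\M^{2^m})^n$, each such monomial lies in $\M^{2\cdot 2^m}=\M^{2^{m+1}}$, so $R_i\in\M^{2^{m+1}}$. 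The first-order part cancels by the very definition of $x_{m+1}$, so $F(x_{m+1})\in(\M^{2^{m+1}})^n$, completing the induction.

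Existence then follows quickly: the bound $x_{m+1}-x_m\in(\M^{2^m})^n$ shows $(x_m)_m$ is Cauchy, so by completeness of $K$ it converges to some $x\in A^n$; since $x\equiv x_0\equiv\overline{x}\bmod\M$ we have $x\in(A\setminus\M)^n$, and by continuity of polynomial evaluation together with $F(x_m)\to 0$ we get $F(x)=0$. For uniqueness, suppose $x,x'\in(A\setminus\M)^n$ are both zeros of $F$ reducing to $\overline{x}$ and put $h=x-x'\in\M^n$. The same expansion gives $0=F(x)-F(x')=J(x')h+R$ with each component of $R$ an $A$-combination of degree-$\geq 2$ monomials in the $h_j$; if $h\in(\M^r)^n$ with $r\geq 1$ then $R\in(\M^{2r})^n\subseteq(\M^{r+1})^n$, hence $J(x')h\in(\M^{r+1})^n$, and multiplying by $J(x')^{-1}\in{\rm M}_n(A)$ gives $h\in(\M^{r+1})^n$. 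Starting from $r=1$ and iterating, $h\in\bigcap_{r\geq 1}(\M^r)^n=\{0\}$, so $x=x'$.

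I expect the only genuinely delicate point to be the exact expansion underlying the quadratic gain: one must handle the negative exponents of the Laurent monomials and avoid any division by factorials (which would be illegitimate when ${\rm char}(k)>0$). Carrying out the expansion monomial by monomial, via the ordinary binomial theorem and the geometric series for $(x_{m,j}+h_j)^{-1}$, circumvents both issues at once, and everything else is bookkeeping with the $\M$-adic filtration.
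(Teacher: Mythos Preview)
Your argument is correct. The Newton iteration is set up properly, the inductive claim $F(x_m)\in(\M^{2^m})^n$ is the right invariant, and you have identified and handled the only real subtlety: expanding Laurent monomials at $x_m+h$ without dividing by factorials. Writing $(x_{m,j}+h_j)^{\alpha_j}$ via the binomial theorem for $\alpha_j\geq 0$ and via the geometric series $(1+h_j/x_{m,j})^{-1}=\sum_{\ell\geq 0}(-h_j/x_{m,j})^\ell$ (iterated) for $\alpha_j<0$ yields coefficients in $A$ in every case, since $x_{m,j}\in A^\ast$ and all binomial coefficients involved are integers; the remainder $R_i$ is then a convergent $A$-series in degree $\geq 2$ monomials of $h$, hence lies in $\M^{2^{m+1}}$ as you claim. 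The uniqueness bootstrap $h\in(\M^r)^n\Rightarrow h\in(\M^{r+1})^n$ is likewise sound.

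As for comparison: the paper does not give a proof at all---it simply cites \cite[Prop.~2.11]{AKP06}. So your write-up goes strictly further, supplying a self-contained argument where the paper defers to the literature. The approach you chose (Newton--Kantorovich with quadratic convergence) is the standard one and is almost certainly what the cited reference does as well; the only feature specific to the Laurent setting is the geometric-series trick for negative exponents, which you have treated correctly.
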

\begin{proof}
See~\cite[Prop.~2.11]{AKP06}.
\end{proof}

\begin{lemma}\label{semi4}
Let $F$ be a system of $n$ polynomials in $K\vars$ such that $0\in{\rm Trop}(F)$. Assume also that $F$ is normalized and
semiregular at $0$. Then all the coefficients of $F$ are in the valuation ring $A$. Moreover, the reduction
map$\mod\M:A^n\to k^n$ induces a bijection between the set of zeros of $F$ in $(K^\ast)^n$ with valuation vector~$0$
(i.e. in $(A\setminus\M)^n$) and the set of zeros of $\overline{F}$ in $(k^\ast)^n$.
\end{lemma}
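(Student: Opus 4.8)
The plan is to show that Lemma~\ref{semi4} follows from the classical Hensel's Lemma (Lemma~\ref{hensel}) essentially by translating the hypotheses. First I would verify that all coefficients of $F$ lie in $A$. Since $F$ is normalized at $0$, we have ${\rm tr}(f_i;0)=0$ for each $i$; by Definition~\ref{def-trop}, ${\rm tr}(f_i;0)=\min_j v(a_{ij})$ where the $a_{ij}$ are the coefficients of $f_i$. Hence $v(a_{ij})\geq 0$ for all $i,j$, i.e. every coefficient is in $A$. This means the reduction $\overline{F}$ modulo $\M$ is well-defined, and by Definition~\ref{defn-lp} the reduced system $\overline{F}$ is exactly ${\rm in}_0(F)$, because at $w=0$ the initial form keeps precisely the terms with $v(a_{ij})=0$ (the ones minimizing $l_j(f_i;0)$) and replaces their coefficients by the first digits $\delta(a_{ij})$ — which for valuation-zero elements is just reduction mod $\M$ — while the terms with $v(a_{ij})>0$ reduce to zero anyway.

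Next I would use semiregularity. By Definition~\ref{def-sreg}, since $0\in{\rm Trop}(F)\cap v(\pi)\Z^n$, semiregularity at $0$ says that ${\rm in}_0(F)=\overline{F}$ has no degenerate zero in $(k^\ast)^n$, i.e. every zero $\overline{x}\in(k^\ast)^n$ of $\overline{F}$ satisfies ${\rm Jac}(\overline{F})(\overline{x})\neq 0$. Now for the bijection: in the forward direction, if $x\in(K^\ast)^n$ is a zero of $F$ with $v(x)=0$, then $x\in(A\setminus\M)^n$, so $\overline{x}:=x\bmod\M=\delta(x)$ is defined and lies in $(k^\ast)^n$; and by Lemma~\ref{tropz}(2) (applied with $w=0$, using ${\rm tr}(f_i;0)=0$) we get $f_i(x)\equiv{\rm in}_0(f_i)(\delta(x))\bmod\M$, so $\overline{F}(\overline{x})=0$. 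Thus reduction sends zeros of $F$ with valuation $0$ to zeros of $\overline{F}$ in $(k^\ast)^n$.

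For surjectivity and injectivity of this map, I would invoke Hensel directly. Given any zero $\overline{x}\in(k^\ast)^n$ of $\overline{F}$, semiregularity gives ${\rm Jac}(\overline{F})(\overline{x})\neq 0$, so Lemma~\ref{hensel} produces a unique $x\in(A\setminus\M)^n$ with $F(x)=0$ and $x\equiv\overline{x}\bmod\M$. Existence of $x$ gives surjectivity; the uniqueness clause in Hensel gives injectivity (two zeros of $F$ in $(A\setminus\M)^n$ reducing to the same $\overline{x}$ must coincide). Note that every zero of $F$ with valuation $0$ is automatically in $(A\setminus\M)^n$ and every zero of $\overline{F}$ we care about is in $(k^\ast)^n$, so the correspondence is between exactly the claimed sets. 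The main subtlety — and the only place any real thought is needed — is the bookkeeping that at $w=0$ the reduction $\overline{F}$ literally equals the initial-form system ${\rm in}_0(F)$ as elements of $k\vars$, so that "no degenerate zero of ${\rm in}_0(F)$" translates verbatim into the non-vanishing-Jacobian hypothesis of Hensel's Lemma; once that identification is clean, the rest is a direct application.
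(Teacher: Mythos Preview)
Your proof is correct and follows essentially the same route as the paper's: both arguments rest on the identification $\overline{F}={\rm in}_0(F)$ (obtained from normalization at $0$), which converts semiregularity into the non-vanishing Jacobian hypothesis of Hensel's Lemma, and then Hensel's Lemma yields both existence and uniqueness of the lift. Your write-up is in fact slightly more explicit in the forward direction (citing Lemma~\ref{tropz}(2) where the paper simply says ``it is clear''), but the overall structure is the same.
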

\begin{proof}
Suppose that $F=(f_1,\ldots,f_n)$.
Since the system is normalized at $0$, we have ${\rm tr}(f_i;0)=0$ for all $i=1,\ldots,n$. Since ${\rm tr}(f_i;0)$ is the minimum
valuation of the coefficients of~$f_i$, then all the coefficents of~$f_i$ have valuation at least $0$, i.e. $f_i\in A\vars$.
Moreover, the terms of $f_i$ that are kept in ${\rm in}_0(f_i)$ are those with coefficients in $A\setminus\M$. For these
terms, reducing modulo~$\M$ or taking first digit is exaclty the same, so $\overline{f_i}={\rm in}_0(f_i)$. In particular, we
have that $\overline{F}={\rm in}_0(F)$ has no degenerate solutions in $(k^\ast)^n$.
It is clear that the reduction modulo~$\M$ maps zeros of $F$ in $(K^\ast)^n$ with valuation $0$ to zeros of $\overline{F}$
in $(k^\ast)^n$. We only have to show that the map is a bijection. For the surjectivity, take a zero of $\overline{F}$ in $(k^\ast)^n$.
The semiregularity at $0$ guarantees that it is non-degenerate zero, and Lemma~\ref{hensel} shows that it can be lifted to a
zero of $F$ in $(A\setminus\M)^n$, i.e. to a zero of $F$ with valuation $0$. The injectivity follows from the uniqueness of the
lifting in Hensel's Lemma.
\end{proof}

\begin{defn}\label{def-zeroset}
For any system of polynomials $F$ in $K\vars$, the set of roots of $F$ in $(K^\ast)^n$ is denoted by $Z_K(F)$, and the set
of zeros of $F$ with valuation $w$ is written $Z_K^w(F)$.
\end{defn}

\begin{thm}\label{semi6}
Let $F$ be a system of $n$ polynomials in $K\vars$. Let $w\in{\rm Trop}(F)\cap v(\pi)\Z^n$ and suppose that $F$ is semiregular at $w$.
The first digit maps $\delta:Z_K^w(F)\to Z_{k}({\rm in}_w(F))$ and $\delta:Z_K^w(F^{[w]})\to Z_{k}({\rm in}_w(F))$ are bijections
(and are well-defined between these sets of roots).
\end{thm}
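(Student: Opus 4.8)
The plan is to reduce the general statement to the special case covered by Lemma~\ref{semi4} by performing two changes of variables: a translation to normalize the valuation vector and a rescaling of the variables to move $w$ to the origin. First I would pick $b\in(K^\ast)^n$ with $v(b)=w$ (possible since $w\in v(\pi)\Z^n$) and consider the rescaled system $G=(f_1(b_1X_1,\ldots,b_nX_n),\ldots,f_n(b_1X_1,\ldots,b_nX_n))$; by item~6 of Lemma~\ref{trop4} we have $0\in{\rm Trop}(G)$, by Lemma~\ref{semi3} the system $G$ is semiregular at $0$, and the substitution $x\mapsto(b_1^{-1}x_1,\ldots,b_n^{-1}x_n)$ is a bijection between $Z_K^w(F)$ and $Z_K^0(G)$. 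Then I would multiply each $g_i$ by a suitable power of $\pi$, namely $a_i=\pi^{-{\rm tr}(g_i;0)/v(\pi)}$, to obtain a system $H=(a_1g_1,\ldots,a_ng_n)$ that is normalized at $0$; by items~1 and~2 of Lemma~\ref{trop4} this preserves ${\rm Trop}$ and the zero set, and by Lemma~\ref{semi2} it preserves semiregularity, so $H$ is normalized and semiregular at $0$ and $Z_K^0(H)=Z_K^0(G)$.

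Now Lemma~\ref{semi4} applies to $H$: all coefficients of $H$ lie in $A$, and reduction mod $\M$ is a bijection $Z_K^0(H)\to Z_k(\overline{H})$, where $\overline{H}={\rm in}_0(H)$. The next step is to track how $\delta$ relates to this reduction map and to the changes of variables. On $(K^\ast)^n$ with valuation $0$ the first digit map $\delta$ coincides with reduction mod $\M$, so $\delta:Z_K^0(H)\to Z_k({\rm in}_0(H))$ is a bijection. Using items~4 and~8 of Lemma~\ref{trop4}, ${\rm in}_0(H)$ is obtained from ${\rm in}_w(F)$ by the monomial multiplications $\delta(a_i)$ and the variable rescaling $X_j\mapsto\delta(b_j)X_j$; since $\delta$ is a homomorphism, these operations are invertible over $k^\ast$ and induce a bijection $Z_k({\rm in}_0(H))\to Z_k({\rm in}_w(F))$ compatible with the corresponding change of variables on the $K$-side (here $v(a_i)$ and the fact that $v(b)=w\in v(\pi)\Z^n$ guarantee the exponents $v(a_i)/v(\pi)$ are integers, so the reductions make sense). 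Chasing through the commutative diagram of these bijections yields that $\delta:Z_K^w(F)\to Z_k({\rm in}_w(F))$ is a bijection, and in particular shows it is well-defined, i.e. lands in $(k^\ast)^n$ — which also follows directly from Lemma~\ref{tropz}(2).

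For the second map, I would repeat the argument with $F^{[w]}$ in place of $F$. By Lemma~\ref{semi5}, $F^{[w]}$ is semiregular at $w$ whenever $F$ is, and $w\in{\rm Trop}(F^{[w]})$ with ${\rm in}_w(F^{[w]})={\rm in}_w(F)$, so the same chain of reductions applies verbatim and gives the bijection $\delta:Z_K^w(F^{[w]})\to Z_k({\rm in}_w(F))$. The main obstacle I anticipate is purely bookkeeping: keeping the two changes of variables consistent across the three objects ($F$, the rescaled/normalized $H$, and the residue-field systems) and verifying at each stage that the relevant exponents of $\pi$ are integers so that every ``reduction mod $\M$'' and every $\delta$ is legitimately defined — the hypothesis $w\in v(\pi)\Z^n$ is exactly what is used here, and it must be invoked carefully when choosing $b$ and when handling $\delta(a_i)X^{\alpha_i}$. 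No genuinely new idea is needed beyond Lemma~\ref{semi4} and the compatibility lemmas~\ref{trop4}, \ref{semi2}, \ref{semi3}, \ref{semi5}.
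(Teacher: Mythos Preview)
Your proposal is correct and follows essentially the same route as the paper: reduce to the case $w=0$ by a rescaling of variables, normalize by multiplying through by constants, and then invoke Lemma~\ref{semi4}, using Lemma~\ref{semi5} and ${\rm in}_w(F^{[w]})={\rm in}_w(F)$ for the second map. The only cosmetic difference is that the paper chooses the specific rescaling $b_j=\pi^{w_j/v(\pi)}$, which has $\delta(b_j)=1$ and hence makes the map $x\mapsto bx$ \emph{first-digit preserving}; this collapses your diagram chase (the $\delta(b_j)$-rescaling on the residue side becomes the identity) and avoids having to track ${\rm in}_0(H)$ versus ${\rm in}_w(F)$ through item~8 of Lemma~\ref{trop4}.
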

\begin{proof}
The case $w=0$ and $F$ normalized at $0$ follows immediately from Lemmas~\ref{semi4} and~\ref{semi5} and the fact that the reductions of~$F$
and~$F^{[0]}$ modulo~$\M$ coincide with~${\rm in}_0(F)$. Note that the assumption that $F$ is normalized at $0$ can be easily removed by
pre-multiplying each equation in $F$ by a suitable constant in~$K^\ast$. We can also reduce the general case to $w=0$ by a simple change of
variables. Define $\hat{F}=F(\pi^{w_1/v(\pi)}X_1,\ldots,\pi^{w_n/v(\pi)}X_n)$. By Lemma~\ref{semi3}, the system $\hat{F}$ is semiregular at~$0$.
It is clear that the first digit preserving map $(x_1,\ldots,x_n)\mapsto(\pi^{w_1/v(\pi)}x_1,\ldots,\pi^{w_n/v(\pi)}x_n)$ is a bijection between
the set of solutions of $\hat{F}$ with valuation vector $0$ and the zeros of $F$ with valuation $w$. Moreover, by the item 8 of
Lemma~\ref{trop4}, we have ${\rm in}_w(F)={\rm in}_0(\hat{F})$, and by the item 7 we have
$F^{[w]}(\pi^{w_1/v(\pi)}X_1,\ldots,\pi^{w_n/v(\pi)}X_n)=\hat{F}^{[0]}$. This provides the reduction to the case $w=0$.
\end{proof}

Although the previous result contains all the substance of this section, the following corollary is the way Theorem~\ref{semi6}
is intended to be used in practice.

\begin{coro}\label{semi8}
Let $F$ be a system of $n$ polynomials in $K\vars$. Assume that $F$ is semiregular at $w$. Then there is a unique bijection
between the sets $Z_K^w(F)$ and $Z_K^w(F^{[w]})$ that preserves first digits. If $w\not\in{\rm Trop}(F)$ or $w\not\in v(\pi)\Z^n$,
then these sets are empty. Otherwise, the first digit map gives bijections from $Z_K^w(F)$ and $Z_K^w(F^{[w]})$ to
$Z_{k}({\rm in}_w(F))$.
\end{coro}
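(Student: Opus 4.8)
The plan is to derive Corollary~\ref{semi8} directly from Theorem~\ref{semi6}, splitting into the two cases governed by whether $w$ belongs to ${\rm Trop}(F)\cap v(\pi)\Z^n$. First I would handle the degenerate case: if $w\notin{\rm Trop}(F)$, then by Proposition~\ref{trop2} no zero of $F$ in $(K^\ast)^n$ can have valuation $w$, so $Z_K^w(F)=\emptyset$; and since ${\rm Trop}(F^{[w]})\subseteq{\rm Trop}(F)$ is contained in ${\rm Trop}(F)$ near $w$ (in fact by Lemmas~\ref{trop5} and~\ref{trop6} it agrees with ${\rm Trop}(F)$ in a neighborhood of $w$), we also get $Z_K^w(F^{[w]})=\emptyset$. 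If instead $w\notin v(\pi)\Z^n$, then $v(\pi)\Z^n$ contains all valuation vectors of points in $(K^\ast)^n$, so again both sets are empty. In either subcase the ``unique first-digit-preserving bijection'' between two empty sets is the empty map, so the first assertion holds vacuously.

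Next I would treat the main case $w\in{\rm Trop}(F)\cap v(\pi)\Z^n$. Here semiregularity of $F$ at $w$ is exactly the hypothesis of Theorem~\ref{semi6}, which gives that the first digit map restricts to bijections $\delta:Z_K^w(F)\to Z_k({\rm in}_w(F))$ and $\delta:Z_K^w(F^{[w]})\to Z_k({\rm in}_w(F))$. Composing the second with the inverse of the first yields a bijection $Z_K^w(F)\to Z_K^w(F^{[w]})$ that preserves first digits (since both legs do). This establishes existence of the claimed bijection and the final sentence of the corollary simultaneously.

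It remains to argue uniqueness of the first-digit-preserving bijection $\phi:Z_K^w(F)\to Z_K^w(F^{[w]})$. The point is that ``preserves first digits'' forces $\delta\circ\phi=\delta$ on $Z_K^w(F)$; composing with the bijection $\delta:Z_K^w(F^{[w]})\to Z_k({\rm in}_w(F))$ from Theorem~\ref{semi6}, which is injective, we recover $\phi$ uniquely from the identity $\delta|_{Z_K^w(F^{[w]})}\circ\phi=\delta|_{Z_K^w(F)}$. Hence any two first-digit-preserving bijections coincide, so $\phi$ is the unique one. (This uniqueness argument applies equally in the empty case, trivially.)

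I do not expect any genuine obstacle here: the corollary is a bookkeeping consequence of Theorem~\ref{semi6}, and the only point requiring a moment's care is making sure $Z_K^w(F^{[w]})$ really is empty when $w\notin{\rm Trop}(F)$ (rather than merely $w$ failing to be an isolated point), which follows because $f_i^{[w]}$ is a single monomial for each $i$ whenever $w\notin{\rm Trop}(f_i)$, and a single nonzero monomial has no zeros in $(K^\ast)^n$. The mild subtlety is purely notational — phrasing ``the unique bijection'' so that the vacuous cases are covered uniformly — rather than mathematical.
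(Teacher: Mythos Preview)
Your proposal is correct and is exactly the approach the paper intends: the corollary is stated without proof immediately after Theorem~\ref{semi6}, with the remark that the theorem ``contains all the substance'' of the section, so your derivation from Theorem~\ref{semi6} is precisely what is expected. One small wobble: in the first paragraph you invoke Lemmas~\ref{trop5} and~\ref{trop6} for the case $w\notin{\rm Trop}(F)$, but those lemmas assume $w\in{\rm Trop}(f)$; fortunately you give the clean argument at the end (if $w\notin{\rm Trop}(F)$ then some $f_i^{[w]}$ is a single monomial, hence has no zeros in $(K^\ast)^n$), and that is all that is needed.
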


A more computational point of view is shown in the following algorithm.

\begin{algorithm}[H]
\caption{Decide whether a system $F=(f_1,\ldots,f_n)$ of $n$ polynomials in $K\vars$ is semiregular at a
given point $w=(w_1,\ldots,w_n)\in\R^n$. In case of semiregularity, print the number of solutions in $(K^\ast)^n$
with valuation vector $w$.}
\label{alg-semireg-test}
\begin{algorithmic}[1]
\IF {$w\not\in v(\pi)\Z^n$}
\PRINT the system has no solutions in $(K^\ast)^n$ with valuation $w$
\RETURN YES
\ENDIF
\FOR {$i=1,\ldots,n$}
\STATE $\tilde{f_i}\leftarrow {\rm in}_w(f_i)$
\IF {$\tilde{f_i}$ is a monomial}
\PRINT the system has no solutions in $(K^\ast)^n$ with valuation $w$
\RETURN YES
\ENDIF
\ENDFOR
\STATE ${\rm Jac}(\tilde{F})\leftarrow\det(\partial \tilde{f_i}/\partial X_j)$
\IF {there is a solution of $\tilde{f_1}(x)=\cdots=\tilde{f_n}(x)={\rm Jac}(\tilde{F})(x)=0$ in $(k^\ast)^n$}
\RETURN NO
\ENDIF
\STATE $s\leftarrow$ number of solutions of $\tilde{f_1}(x)=\cdots=\tilde{f_n}(x)=0$ in $(k^\ast)^n$
\PRINT the system has $s$ solutions in $(K^\ast)^n$ with valuation $w$
\RETURN YES
\end{algorithmic}
\end{algorithm}

In case that only an estimation for the number of zeros is needed, the following statement might be useful.

\begin{coro}\label{semi7}
Let $F$ be a system of $n$ polynomials in $K\vars$.
If ${\rm Trop}(F)$ is finite and $F$ is semiregular at any $w\in{\rm Trop}(F)$, then the number of solutions of $F$ 
in $(K^\ast)^n$ is $$|Z_K(F)|=\!\!\!\!\!\!\!\!\!\!\!\!\!\!\sum_{w\in{\rm Trop}(F)\cap v(\pi)\Z^n}\!\!\!\!\!\!\!\!\!\!\!\!\!\!|Z_k({\rm in}_w(F)|
\leq |{\rm Trop}(F)\cap v(\pi)\Z^n|\cdot|k^\ast|^n\leq |{\rm Trop}(F)|\cdot|k^\ast|^n.$$
\end{coro}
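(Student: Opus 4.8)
The plan is to chain together the results already established, reducing the global count of $|Z_K(F)|$ to a sum of local counts indexed by the tropical prevariety, and then bounding each local count crudely.

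\medskip

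First I would observe that by Proposition~\ref{trop2}, every zero $x\in(K^\ast)^n$ of $F$ has $v(x)\in{\rm Trop}(F)$, and moreover $v(x)\in v(\pi)\Z^n$ since $v$ is discrete with value group $v(\pi)\Z$. Hence the sets $Z_K^w(F)$, as $w$ ranges over ${\rm Trop}(F)\cap v(\pi)\Z^n$, partition $Z_K(F)$ into finitely many pieces (finiteness of ${\rm Trop}(F)$ is a hypothesis). This gives $|Z_K(F)|=\sum_{w\in{\rm Trop}(F)\cap v(\pi)\Z^n}|Z_K^w(F)|$. Next, for each such $w$, the system $F$ is semiregular at $w$ by hypothesis, so Corollary~\ref{semi8} (equivalently Theorem~\ref{semi6}) applies: since $w\in{\rm Trop}(F)\cap v(\pi)\Z^n$, the first digit map $\delta$ gives a bijection $Z_K^w(F)\to Z_k({\rm in}_w(F))$, hence $|Z_K^w(F)|=|Z_k({\rm in}_w(F))|$. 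Substituting yields the claimed equality
$$|Z_K(F)|=\!\!\!\!\sum_{w\in{\rm Trop}(F)\cap v(\pi)\Z^n}\!\!\!\!|Z_k({\rm in}_w(F))|.$$

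\medskip

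The inequalities are then purely a matter of size estimates. For the first one, bound each term $|Z_k({\rm in}_w(F))|$ by $|k^\ast|^n$, which is valid simply because $Z_k({\rm in}_w(F))\subseteq(k^\ast)^n$; summing over the index set gives the factor $|{\rm Trop}(F)\cap v(\pi)\Z^n|$. For the second inequality, use the trivial containment ${\rm Trop}(F)\cap v(\pi)\Z^n\subseteq{\rm Trop}(F)$, so its cardinality is at most $|{\rm Trop}(F)|$. One should note in passing that these bounds are only meaningful (finite) when $k^\ast$ is finite, i.e. when $k$ is a finite field, and that otherwise the statement is vacuous or should be read with the convention that the right-hand side is $+\infty$; but since the corollary is applied in the paper with $k$ finite, I would simply state it under that reading or leave the reader to supply it.

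\medskip

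I do not expect any serious obstacle here: the corollary is a bookkeeping consequence of Proposition~\ref{trop2} (localization of zeros at tropical points), the discreteness of the valuation (so that $v(x)\in v(\pi)\Z^n$), and Corollary~\ref{semi8} or Theorem~\ref{semi6} (the Hensel-type bijection at each semiregular point). The only point requiring a sentence of care is the finiteness of the indexing set, which is exactly the hypothesis that ${\rm Trop}(F)$ is finite, guaranteeing that the sum is a genuine finite sum and that $|{\rm Trop}(F)|$ is a finite number; everything else is a one-line substitution and a one-line estimate.
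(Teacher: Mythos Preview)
Your proposal is correct and matches the paper's intended reasoning: the paper in fact states Corollary~\ref{semi7} without proof, treating it as immediate from Proposition~\ref{trop2} and Corollary~\ref{semi8}/Theorem~\ref{semi6}, which is precisely the chain you spell out. Your added remark about the bounds being meaningful only for finite $k$ is a fair caveat not made explicit in the paper.
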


Note that when ${\rm Trop}(F)$ is a finite set, then it has at most $\prod_{i=1}^n\binom{t_i}{2}$ points, where $t_i$ is the number of monomials
of $f_i$. Each ${\rm Trop}(f_i)$ is contained in the union of $\binom{t_i}{2}$ hyperplanes (see Lemma~\ref{trop1}), and the intersection of $n$ of
these hyperplanes (one in each ${\rm Trop}(f_i)$) determines at most one point in ${\rm Trop}(F)$. In particular, a system $F$ that satisfies
the hypothesis of Corollary~\ref{semi7} has at most $\binom{t_1}{2}\cdots\binom{t_n}{2}|k^\ast|^n$ roots in $(K^\ast)^n$, and all these roots are
non-degenerate.

\bigskip

We conclude this section with a discussion of the univariate case. Consider $f=\sum_{i=1}^ta_iX^{\alpha_i}\in K[X]$. In section~\ref{sec-trop},
we showed that the tropical hypersurface of $f$ is the set of minus the slope of the segments of the lower hull of ${\rm NP}(f)$. For each
of these $w\in{\rm Trop}(f)$, the lower polynomial $f^{[w]}$ and initial form ${\rm in}_w(f)$ are simply the polynomials obtained by keeping
only the terms with $(\alpha_i,v(a_i))$ lying on the segment of slope $-w$. For each $w\in{\rm Trop}(f)$, semiregularity at $w$ means that
either $w\not\in v(\pi)\Z$, in which case $f$ has no solutions in $K^\ast$ with valuation $w$, or ${\rm in}_w(f)$ has no degenerate zeros in
$k^\ast$. In case of semiregularity at $w\in{\rm Trop}(f)\cap v(\pi)\Z$, our main result says that the number of roots of $f$ in $K^\ast$ with
valuation $w$ and the number of roots of ${\rm in}_w(f)$ in $k^\ast$ coincide.

\section{Regularity.}\label{sec-reg}

\begin{defn}\label{def-reg}
A system $F$ of $n$ polynomials in $K\vars$ is regular if ${\rm Trop}(F)$ is finite, $F^{[w]}$ consists solely of binomials
and $F$ is semiregular at~$w$ for all $w\in{\rm Trop}(F)$.
\end{defn}

For this kind of system, we can provide an explicit formula for the number of roots in $(K^\ast)^n$. We will also give
a different characterization of regularity that is easier to check. First of all, the notion of regularity is well-behaved
under monomial changes of variables.

\begin{lemma}\label{reg1}
Let $F=(f_1,\ldots,f_n)$ be a system of polynomials in $K\vars$.
Let $a_1,\ldots,a_n\in K^\ast$, $b_1,\ldots,b_n\in K^\ast$, and $\alpha_1,\ldots,\alpha_n\in\Z^n$.
The following three statements are equivalent.
\begin{enumerate}
\item $F$ is regular.
\item $(a_1X^{\alpha_1}f_1,\ldots,a_nX^{\alpha_n}f_n)$ is regular.
\item $(f_1(b_1X_1,\ldots,b_nX_n),\ldots,f_n(b_1X_1,\ldots,b_nX_n))$ is regular.
\end{enumerate}
\end{lemma}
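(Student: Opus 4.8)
The plan is to prove the equivalence of the three notions of regularity by reducing everything to the behavior of tropical prevarieties, lower polynomials, and semiregularity under the two types of monomial operations — multiplication of each equation by a monomial $a_iX^{\alpha_i}$, and rescaling of the variables $X_j\mapsto b_jX_j$ — all of which has already been recorded in Lemmas~\ref{trop4}, \ref{semi2}, and~\ref{semi3}. Since the two operations are invertible (the inverses are again of the same type, using $a_i^{-1}X^{-\alpha_i}$ and $b_j^{-1}$), it suffices to prove the two implications $(1)\Rightarrow(2)$ and $(1)\Rightarrow(3)$; the reverse implications then follow by applying the forward ones to the transformed system.

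For $(1)\Rightarrow(2)$, write $\tilde F=(a_1X^{\alpha_1}f_1,\ldots,a_nX^{\alpha_n}f_n)$. I would check the three defining conditions of regularity for $\tilde F$ one at a time. Finiteness of ${\rm Trop}(\tilde F)$: by item~2 of Lemma~\ref{trop4}, ${\rm Trop}(a_iX^{\alpha_i}f_i)={\rm Trop}(f_i)$ for each $i$, hence ${\rm Trop}(\tilde F)={\rm Trop}(F)$, which is finite by hypothesis. The lower polynomials: by item~3 of Lemma~\ref{trop4}, $(a_iX^{\alpha_i}f_i)^{[w]}=a_iX^{\alpha_i}f_i^{[w]}$, which has exactly as many terms as $f_i^{[w]}$; so $\tilde F^{[w]}$ consists solely of binomials precisely when $F^{[w]}$ does. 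Semiregularity at every $w\in{\rm Trop}(\tilde F)={\rm Trop}(F)$: this is exactly Lemma~\ref{semi2}. Thus $\tilde F$ is regular.

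For $(1)\Rightarrow(3)$, write $\tilde F=(f_1(b_1X_1,\ldots,b_nX_n),\ldots,f_n(b_1X_1,\ldots,b_nX_n))$ and let $\beta=(v(b_1),\ldots,v(b_n))$. By item~6 of Lemma~\ref{trop4}, ${\rm Trop}(\tilde F)={\rm Trop}(F)-\beta$, a translate of a finite set, hence finite. By item~7 of Lemma~\ref{trop4}, $\tilde f_i^{[w]}=f_i^{[w+\beta]}(b_1X_1,\ldots,b_nX_n)$, which has the same number of terms as $f_i^{[w+\beta]}$; since $w\mapsto w+\beta$ maps ${\rm Trop}(\tilde F)$ onto ${\rm Trop}(F)$, the binomial condition transfers. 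Finally, semiregularity of $\tilde F$ at every $w\in{\rm Trop}(\tilde F)$ is exactly Lemma~\ref{semi3} applied with the roles arranged so that $F$ semiregular at $w+\beta$ gives $\tilde F$ semiregular at $w$. Hence $\tilde F$ is regular.

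I do not expect a genuine obstacle here: the lemma is a bookkeeping consequence of results already proven, and the only thing to be careful about is that regularity is required to hold \emph{at every point} of the tropical prevariety, so one must consistently match up $w\in{\rm Trop}(\tilde F)$ with the corresponding point of ${\rm Trop}(F)$ (the identity in case~(2), the shift by $\beta=v(b)$ in case~(3)) when invoking the binomial and semiregularity conditions. The mildest subtlety worth a sentence is the reduction to only the two implications $(1)\Rightarrow(2)$ and $(1)\Rightarrow(3)$ via invertibility of the monomial operations, which closes the cycle of equivalences.
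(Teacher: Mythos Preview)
Your proposal is correct and takes essentially the same approach as the paper, which simply records the lemma as ``a consequence of Lemmas~\ref{trop4}, \ref{semi2} and~\ref{semi3}.'' You have merely unpacked that one-line proof, checking each of the three regularity conditions against the relevant items of Lemma~\ref{trop4} and invoking Lemmas~\ref{semi2} and~\ref{semi3} for semiregularity; the invertibility remark to reduce to the forward implications is a clean way to organize it.
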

\begin{proof}
A consequence of Lemmas~\ref{trop4}, \ref{semi2} and \ref{semi3}.
\end{proof}

The problem of deciding whether a system is regular or not can be reduced to the case of binomial systems: in Definition~\ref{def-reg}, the
condition $F$ is semiregular at $w$ can be replaced, according to Lemma~\ref{semi5}, by the condition $F^{[w]}$ is semiregular at $w$. The
following lemma and proposition characterize semiregularity for binomial systems.

\begin{lemma}\label{reg2}
Consider a binomial system $B=(a_1X^{\alpha_1}-b_1X^{\beta_1},\ldots,a_nX^{\alpha_n}-b_nX^{\beta_n})$ with coefficients
$a=(a_1,\ldots,a_n)\in (K^\ast)^n$, let $b=(b_1,\ldots,b_n)\in(K^\ast)^n$, and let $M\in\Z^{n\times n}$ be the matrix whose $i$-th
row is $\alpha_i-\beta_i$ for $i=1,\ldots,n$. Then
$${\rm Trop}(B)=\{w\in\R^n\;:\; Mw=v(b)-v(a)\}.$$
In particular, ${\rm Trop}(B)$ is finite (and non-empty) if and only if $det(M)\neq 0$.
\end{lemma}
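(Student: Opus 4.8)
The plan is to compute the tropical prevariety ${\rm Trop}(B)$ directly from its definition as the intersection of the tropical hypersurfaces of the individual binomials. Each $f_i = a_iX^{\alpha_i} - b_iX^{\beta_i}$ has exactly two terms, so by Lemma~\ref{trop1} a point $w$ lies in ${\rm Trop}(f_i)$ if and only if the two weights coincide, i.e. $v(a_i) + \alpha_i\cdot w = v(b_i) + \beta_i\cdot w$ (the ``at least twice'' condition is automatic once the two — and only — terms are tied, and there are no further terms to beat). Rearranging, this says $(\alpha_i - \beta_i)\cdot w = v(b_i) - v(a_i)$, which is precisely the $i$-th row of the linear system $Mw = v(b) - v(a)$. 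Intersecting over $i = 1,\ldots,n$ gives ${\rm Trop}(B) = \{w\in\R^n : Mw = v(b) - v(a)\}$.

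For the second assertion, I would invoke elementary linear algebra over $\R$. If $\det(M)\neq 0$ then $M$ is invertible and the system $Mw = v(b) - v(a)$ has the unique solution $w = M^{-1}(v(b) - v(a))$, so ${\rm Trop}(B)$ is a single point, hence finite and non-empty. Conversely, if $\det(M) = 0$ then the affine solution set of $Mw = v(b) - v(a)$ is either empty or a positive-dimensional affine subspace; in the latter case it is infinite, and in the former case it is empty — so in neither case is it finite and non-empty. This establishes the claimed equivalence.

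I do not expect any serious obstacle here: the statement is essentially a direct unwinding of Lemma~\ref{trop1} for the special case $t = 2$, followed by a triviality about linear systems. The only point that requires a moment's care is confirming that for a genuine binomial (both coefficients nonzero, $\alpha_i \neq \beta_i$) the condition ``the minimum is attained at least twice'' reduces exactly to ``the two weights are equal,'' with no degenerate subtlety — but this is immediate since there are precisely two terms. One might also remark, for completeness, that when $\det(M) = 0$ but the system happens to be consistent, ${\rm Trop}(B)$ is a nontrivial affine subspace of $\R^n$, which matches the geometric picture that binomial tropical hypersurfaces are hyperplanes and their intersection drops to a point exactly when the normal directions $\alpha_i - \beta_i$ span $\R^n$.
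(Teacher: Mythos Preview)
Your proof is correct and follows essentially the same approach as the paper: invoke Lemma~\ref{trop1} to identify ${\rm Trop}(a_iX^{\alpha_i}-b_iX^{\beta_i})$ with the hyperplane $v(a_i)+\alpha_i\cdot w = v(b_i)+\beta_i\cdot w$, then observe this is the $i$-th row of $Mw=v(b)-v(a)$. The paper's proof actually omits the linear-algebra justification of the final ``finite and non-empty iff $\det(M)\neq 0$'' sentence entirely, so your version is, if anything, more complete.
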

\begin{proof}
By Lemma~\ref{trop1}, the tropical hypersurface of the $i$-th binomial is
${\rm Trop}(a_iX^{\alpha_i}-b_iX^{\beta_i})=\{w\in\R^n\;:\;v(a_i)+\alpha_i\cdot w = v(b_i)+\beta_i\cdot w\}$.
This equation corresponds with the $i$-th row of $Mw=v(b)-v(a)$.
\end{proof}

For any vector $x=(x_1,\ldots,x_n)$ with non-zero entries and any matrix $M=(m_{ij})_{1\leq i,j\leq n}\in\Z^{n\times n}$, we write
$$x^M=(x_1^{m_{11}}\cdots x_n^{m_{1n}},\ldots,x_1^{m_{n1}}\cdots x_n^{m_{nn}}).$$
Note that if $P,Q\in\Z^{n\times n}$, then $x^{PQ}=(x^Q)^P$.

\begin{prop}\label{reg3}
Consider the binomial system $B=(a_1X^{\alpha_1}-b_1X^{\beta_1},\ldots,a_nX^{\alpha_n}-b_nX^{\beta_n})$ in $K\vars$.
Let $a=(a_1,\ldots,a_n)$ and $b=(b_1,\ldots,b_n)$. Assume that the matrix $M\in\Z^{n\times n}$, whose $i$-th row is $\alpha_i-\beta_i$
for $i=1,\ldots,n$, has non-zero determinant. Let $M=PDQ$
be the Smith Normal Form of $M$, i.e. $P,Q\in\Z^{n\times n}$ are invertible and $D={\rm diag}(d_1,\ldots,d_n)$ with
$d_1\,|\,d_2\,|\cdots|\,d_n$ positive integers. Then $B$ is semiregular at $w=M^{-1}(v(b)-v(a))$ if and only if either:
\begin{enumerate}
\item $w\not\in v(\pi)\Z^n$.
\item ${\rm char}(k)\nmid\det(M)$.
\item the $i$-th entry of $(\delta(b_1/a_1),\ldots,\delta(b_n/a_n))^{P^{-1}}$ is not a $d_i$-th power in $k^\ast$ for some $i=1,\ldots,n$.
\end{enumerate}
In this case, if $(1)$ and $(3)$ do not hold, then the number of solutions of the system $B$ in $(K^\ast)^n$ is
$|Z_K(B)|=\prod_{i=1}^n|\{\xi\in k^\ast\;:\:\xi^{d_i}=1\}|$. Otherwise $B$ has no solutions in $(K^\ast)^n$.
\end{prop}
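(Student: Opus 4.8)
The plan is to reduce everything to the initial form system ${\rm in}_w(B)$ and then apply the dictionary of Theorem~\ref{semi6}. By Lemma~\ref{reg2} the assumption $\det(M)\neq 0$ gives ${\rm Trop}(B)=\{w\}$ with $w=M^{-1}(v(b)-v(a))$, so $Z_K(B)=Z_K^w(B)$ and $w$ is the only point at which semiregularity has to be examined. If $w\notin v(\pi)\Z^n$ --- case~(1) --- then $B$ is semiregular at $w$ by Definition~\ref{def-sreg}, and since the valuation of any element of $K^\ast$ lies in $v(\pi)\Z$ there is no root of $B$ with valuation $w$, hence $Z_K(B)=\emptyset$. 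From now on I assume $w\in v(\pi)\Z^n$, so that semiregularity at $w$ means exactly that ${\rm in}_w(B)$ has no degenerate zero in $(k^\ast)^n$.

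Next I would describe ${\rm in}_w(B)$ concretely. Since $w\in{\rm Trop}(f_i)$ and $f_i$ is a binomial, its two monomials have equal $w$-weight, so ${\rm in}_w(f_i)=\delta(a_i)X^{\alpha_i}-\delta(b_i)X^{\beta_i}$, and $x\in(k^\ast)^n$ is a zero of ${\rm in}_w(B)$ iff $x^M=c$, where $c=(\delta(b_1/a_1),\ldots,\delta(b_n/a_n))$. Writing $M=PDQ$ in Smith Normal Form and using $x^{PQ}=(x^Q)^P$, the change of variables $z=x^Q$ --- a bijection of $(k^\ast)^n$ because $Q$ is invertible over $\Z$ --- turns $x^M=c$ into $(z^D)^P=c$, i.e. into the uncoupled equations $z_i^{d_i}=c'_i$ with $c'=c^{P^{-1}}$. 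Since the $d$-th power map on $k^\ast$ has every nonempty fibre of cardinality $|\{\xi\in k^\ast:\xi^d=1\}|$, the system ${\rm in}_w(B)$ has a zero in $(k^\ast)^n$ iff each $c'_i$ is a $d_i$-th power in $k^\ast$ --- whose negation is precisely condition~(3) --- and in that case it has exactly $\prod_{i=1}^n|\{\xi\in k^\ast:\xi^{d_i}=1\}|$ zeros.

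Then I would compute the Jacobian of ${\rm in}_w(B)$ at a zero $x$. Writing $u_i:=\delta(a_i)x^{\alpha_i}=\delta(b_i)x^{\beta_i}\in k^\ast$, differentiation gives $\partial\,{\rm in}_w(f_i)/\partial X_j\,(x)=x_j^{-1}u_i(\alpha_{ij}-\beta_{ij})$, so the Jacobian matrix at $x$ equals ${\rm diag}(u_1,\ldots,u_n)\,\overline{M}\,{\rm diag}(x_1^{-1},\ldots,x_n^{-1})$, where $\overline{M}$ is the image of $M$ in $k$. Hence ${\rm Jac}({\rm in}_w(B))(x)=u_1\cdots u_n\,x_1^{-1}\cdots x_n^{-1}\det(\overline{M})$, which is nonzero iff $\det(\overline{M})\neq 0$ in $k$, i.e. iff ${\rm char}(k)\nmid\det(M)$ (vacuously true when ${\rm char}(k)=0$, since then $\det(\overline{M})=\det(M)\neq 0$). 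Thus, if~(2) holds every zero of ${\rm in}_w(B)$ is non-degenerate; if~(2) fails every zero of ${\rm in}_w(B)$ is degenerate, so ${\rm in}_w(B)$ has no degenerate zero exactly when it has no zero at all, i.e. exactly when~(3) holds. Combining with the first paragraph gives the equivalence: $B$ is semiregular at $w$ if and only if one of (1), (2), (3) holds.

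For the root count, assume $B$ is semiregular. If~(1) holds, $Z_K(B)=\emptyset$ as already seen. If~(1) fails, then $w\in{\rm Trop}(B)\cap v(\pi)\Z^n$, so Theorem~\ref{semi6} (equivalently Corollary~\ref{semi8}) makes the first digit map a bijection $Z_K(B)=Z_K^w(B)\to Z_k({\rm in}_w(B))$, and $|Z_K(B)|$ is the count from the second paragraph: it equals $\prod_{i=1}^n|\{\xi\in k^\ast:\xi^{d_i}=1\}|$ when~(3) also fails, and $0$ when~(3) holds. I expect the only genuinely delicate points to be bookkeeping: getting the direction of the Smith Normal Form substitution right so that the exponent $P^{-1}$ in~(3) comes out correctly, and cleanly organizing the three-way biconditional into the mutually compatible sub-cases above; the computations themselves are routine.
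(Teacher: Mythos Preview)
Your proof is correct and follows essentially the same route as the paper: dispose of case~(1) by Definition~\ref{def-sreg}, identify the zeros of ${\rm in}_w(B)$ with solutions of $z^D=c^{P^{-1}}$ via Smith Normal Form, and compute the Jacobian to see that non-degeneracy is equivalent to ${\rm char}(k)\nmid\det(M)$. The only cosmetic difference is that the paper first invokes Lemma~\ref{semi2} to replace $B$ by the monic system $X^M=b/a$ before computing the Jacobian, whereas you work directly with the original binomials using the relation $\delta(a_i)x^{\alpha_i}=\delta(b_i)x^{\beta_i}$; you also make the appeal to Theorem~\ref{semi6} for the root count explicit, which the paper leaves implicit.
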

\begin{proof}
By Lemma~\ref{reg1}, we have $w\in{\rm Trop}(B)$. In case that $w\not\in v(\pi)\Z^n$, then $B$ is semiregular at~$w$ by definition,~$B$
has no solutions in $(K^\ast)^n$ since there are no elements in $(K^\ast)^n$ with valuation $w$, and the proposition is proven. Now assume
that $w\in v(\pi)\Z^n$. By Lemma~\ref{semi2}, the system~$B$ is semiregular at~$w$ if and only if the system $X^M=b/a$ is semiregular at~$w$.
The initial form system is $X^M=\delta(b/a)$. Any solution $x\in(K^\ast)^n$ of this system satisfies $(x^Q)^D=(\delta(b/a))^{P^{-1}}$ and then
the condition of item~3 is not met. In other words, if the system satisfies the third condition, then the initial form system (and also $B$)
has no solution, $B$ is automatically semiregular at $w$, and the proposition in proven. So we can assume without loss of generality
that $B$ does not satisfy items~1 and~3. In this case, there exist $y\in(k^\ast)^n$ such that $y^D=(\delta(b/a))^{P^{-1}}$, and then
$x=y^{Q^{-1}}\in(k^\ast)^n$ is a zero of $X^M=\delta(b/a)$. The Jacobian of this system is $J=\det([m_{ij}X_1^{m_{i1}}\cdots X_j^{m_{ij}-1}\cdots X_n^{m_{in}}]_{1\leq i,j\leq n})$, which, after factoring out $X_j^{-1}$ from the $j$-th column, and then $X_1^{m_{i1}}\cdots X_n^{m_{in}}$ from
the $i$-th row, becomes a single term with coefficient $\det(M)$. In particular, a solution $x\in(k^\ast)^n$ of $X^M=\delta(b/a)$ is non-degenerate
if and only if ${\rm char}(k)\nmid\det(M)$. This shows the equivalence between semiregularity of $B$ at $w$ and item~2. 
Finally, the number of solutions of $X^M=\delta(b/a)$ is equal to the number of solutions of $Y^D=(\delta(b/a))^{P^{-1}}$, since the map $x\mapsto x^Q$
is a bijection. We know already that there is a solution $y\in(k^\ast)^n$, and it is clear that all other solution can be obtained by multiplying
the $i$-th entry of~$y$ by a $d_i$-th root of unity in $k^\ast$. This proves the formula for the number of zeros of $B$.
\end{proof}

A system of polynomials $F$ is regular if and only if ${\rm Trop}(F)$ is finite and $F^{[w]}$ is a binomial system that satisfies the
assumptions of Proposition~\ref{reg3} for all $w\in{\rm Trop}(F)$. In this case, an explicit formula for the number of roots of $F$ in
$(K^\ast)^n$ can be obtained from Corollary~\ref{semi7} and Proposition~\ref{reg3}. The following algorithm summarizes this procedure.

\begin{algorithm}[H]
\caption{Decides whether a system $F=(f_1,\ldots,f_n)$ of $n$ polynomials in $K\vars$ is regular. In case of regularity, it
prints the number of solutions in $(K^\ast)^n$.}
\label{alg-reg-test}
\begin{algorithmic}[1]
\STATE compute ${\rm Trop}(F)\leftarrow{\rm Trop}(f_1)\cap\cdots\cap{\rm Trop}(f_n)$
\IF {$|{\rm Trop}(F)|=\infty$}
\RETURN NO
\ENDIF
\STATE $s\leftarrow 0$
\FORALL {$w\in{\rm Trop}(F)$}
\IF {$f_1^{[w]},\ldots,f_n^{[w]}$ are not all binomials}
\RETURN NO
\ELSE
\STATE write each $f_i^{[w]}$ as $a_iX^{\alpha_i}-b_iX^{\beta_i}$ for $i=1,\ldots,n$.
\STATE $M\leftarrow[\alpha_1-\beta_1;\ldots;\alpha_n-\beta_n]$
\STATE compute the Smith Normal Form $PDQ$ of $M$
\STATE $\rho\leftarrow(\delta(b_1/a_1),\ldots,\delta(b_n/a_n))^{P^{-1}}$
\IF {$w\in v(\pi)\Z^n$ and $\rho_i$ is a $d_i$-th power in $k^\ast$ for all $i=1,\ldots,n$}
\IF {${\rm char}(k)\,|\,d_1\cdots d_n$}
\RETURN NO
\ELSE
\STATE $e_i\leftarrow |\{\xi\in k^\ast\,:\,\xi^{d_i}=1\}|$ for $i=1,\ldots,n$
\STATE $s\leftarrow s+\prod_{i=1}^ne_i$
\ENDIF
\ENDIF
\ENDIF
\ENDFOR
\PRINT the system has $s$ solutions in $(K^\ast)^n$
\RETURN YES
\end{algorithmic}
\end{algorithm}

Algorithm~\ref{alg-reg-test} is presented above in pseudo-code with the maximum generality, in order to match the
notation and logic behind Proposition~\ref{reg3}. In any real implementation of the algorithm, the test in line~15
and the formula in line~19, should be replaced by some specific instructions depending on the field $k$.

\begin{itemize}
\item When $k$ is a finite field of cardinality $q=|k|$, the test in line~15 can be rewritten as $\rho_i^{(q-1)/\gcd(q-1,d_i)}=1$,
and line~19 can be replaced by $e_i\leftarrow\gcd(q-1,d_i)$.
\item When $k$ is algebraically closed, line~15 can be simply skipped, since this tests always yields
true, and line~19 becomes $e_i\leftarrow d_i$, or more simply, line~20 becomes $s\leftarrow s+|\det(M)|$ and line~19 is
deleted.
\item When ${\rm char}(k)=0$, the test made in line~16 is not necessary, since by Lemma~\ref{reg2}, the matrix $M$ in line~11
has non-zero determinant, and therefore $d_1\cdots d_n=\det(D)=|\det(M)|\neq 0$.
\end{itemize}

In the univariate case, regular polynomials are very easy to describe. First of all, the tropical hypersurface of a univariate
polynomial is always finite. Moreover, for each $w$ in the tropical set, the lower polynomial $f^{[w]}$ contains all the monomials
$aX^\alpha$ of $f$ such that the point $(\alpha,v(a))$ lies on the lower edge of ${\rm NP}(f)$ with slope $w$. This means that,
in order to have regularity, the lower edges of ${\rm NP}(f)$ must not contain any point (corresponding to a monomial of $f$) other
than the vertices. In addition to this, each lower binomial $f^{[w]}=aX^\alpha+bX^\beta$ must have either $w\not\in v(\pi)\Z$, or
${\rm char}(k)\nmid\alpha-\beta$, or $\delta(b/a)$ not a $(\alpha-\beta)$-th power in $k^\ast$. Compared with the notion of
regularity given in~\cite[Def.~1]{AvIbr}, the definition in this paper includes a broader class of polynomials, while the formula
for the total number of roots in $K^\ast$ provided in~\cite[Thm.~4.4, Thm.~4.5]{AvIbr} is the same as the formula implied by our
Algorithm~\ref{alg-reg-test}.

\bigskip

Consider a set $\mathcal{A}=\{\alpha_1<\cdots<\alpha_t\}\subseteq\Z$ with $t\geq 2$. Denote $K[X]_{\mathcal{A}}$
the set of polynomial supported by $\mathcal{A}$, i.e. $K[X]_{\mathcal{A}}=\{\sum_{\alpha\in\mathcal{A}}a_{\alpha}X^\alpha\,:\,a_{\alpha}\neq 0\}$.
For each $f\in K[X]_{\mathcal{A}}$ we define the support of the Newton Polygon of $f$ as the set
$\mathcal{B}=\{\alpha\in\mathcal{A}\,:\,(\alpha,v(a_\alpha))\in\;\mbox{lower hull of}\;{\rm NP}(f)\}$. The subset of the polynomials
in $K[X]_{\mathcal{A}}$ with Newton Polygon supported at $\mathcal{B}$ is denoted $K[X]_{\mathcal{A}}^{\mathcal{B}}$. Note that we
always have $\{\alpha_1,\alpha_t\}\subseteq\mathcal{B}\subseteq\mathcal{A}$, and that $K[X]_\mathcal{A}=\bigcup_{\{\alpha_1,\alpha_t\}\subseteq\mathcal{B}\subseteq\mathcal{A}}K[X]_{\mathcal{A}}^{\mathcal{B}}$. The discussion
above is summarized in the following corollary.

\begin{coro}\label{np-s}
Let $\mathcal{A}=\{\alpha_1<\cdots<\alpha_t\}\subseteq\Z$ be a set with $t\geq 2$ and ${\rm char}(k)\nmid\alpha_j-\alpha_i$ for all $i\neq j$.
Let $\mathcal{B}=\{\alpha_1=\beta_1<\cdots<\beta_{|\mathcal{B}|}=\alpha_t\}\subseteq\mathcal{A}$ and take
$f=\sum_{\alpha\in\mathcal{A}}a_{\alpha}X^{\alpha}\in K[X]_{\mathcal{A}}^{\mathcal{B}}$. Then
$${\rm Trop}(f)=\left\{ -\frac{v(a_{\beta_{i+1}})-v(a_{\beta_i})}{\beta_{i+1}-\beta_i}\,:\,i=1,\ldots,|\mathcal{B}|-1\right\},$$
i.e. ${\rm Trop}(f)$ is the set of minus the slopes of the segments of ${\rm NP}(f)$. Moreover, $f$ is regular if and only if the points
$\{(\beta,v(a_\beta))\,:\,\beta\in\mathcal{B}\}$ are all vertices of the Newton Polygon, and in this case, the number of roots of $f$ in
$K^\ast$ is equal to
$$\sum_{i=1}^{|\mathcal{B}|-1}\chi_{_{v(\pi)\Z}}\left(\frac{v(a_{\beta_{i+1}})-v(a_{\beta_i})}{\beta_{i+1}-\beta_i}\right)\left|Z_k(\delta(a_{\beta_{i+1}})X^{\beta_{i+1}}+\delta(a_{\beta_{i}})X^{\beta_{i}})\right|.$$
\end{coro}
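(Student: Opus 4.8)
The plan is to derive Corollary~\ref{np-s} as a direct specialization of the general regularity theory developed in this section, applied to the case $n=1$. The starting observations are already recorded in the excerpt: for a univariate polynomial $f=\sum_{\alpha\in\mathcal{A}}a_\alpha X^\alpha$, the tropical hypersurface ${\rm Trop}(f)$ is finite and equals the set of minus the slopes of the segments of the lower hull of ${\rm NP}(f)$ (Lemma~\ref{trop7} and the remark following it); and since $\mathcal{B}$ is by definition the support of the lower hull, with $\beta_1<\cdots<\beta_{|\mathcal{B}|}$ its elements in increasing order, the segments of the lower hull join consecutive points $(\beta_i,v(a_{\beta_i}))$ and $(\beta_{i+1},v(a_{\beta_{i+1}}))$. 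This immediately gives the stated formula for ${\rm Trop}(f)$, namely the set of values $-(v(a_{\beta_{i+1}})-v(a_{\beta_i}))/(\beta_{i+1}-\beta_i)$ for $i=1,\ldots,|\mathcal{B}|-1$.

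Next I would unwind the definition of regularity (Definition~\ref{def-reg}) in this univariate setting. Since ${\rm Trop}(f)$ is automatically finite, regularity reduces to two conditions for each $w\in{\rm Trop}(f)$: that $f^{[w]}$ be a binomial, and that $f$ (equivalently, by Lemma~\ref{semi5}, $f^{[w]}$) be semiregular at $w$. For the $i$-th slope $w_i=-(v(a_{\beta_{i+1}})-v(a_{\beta_i}))/(\beta_{i+1}-\beta_i)$, the lower polynomial $f^{[w_i]}$ consists of exactly those monomials $a_\alpha X^\alpha$ with $(\alpha,v(a_\alpha))$ lying on the $i$-th segment of the lower hull; this is a binomial if and only if that segment contains no point $(\beta,v(a_\beta))$ with $\beta\in\mathcal{B}$ other than its two endpoints — which, since $\mathcal{B}$ already encodes exactly the lattice points of $\mathcal{A}$ on the lower hull, is precisely the condition that every point $(\beta,v(a_\beta))$ with $\beta\in\mathcal{B}$ is a vertex of ${\rm NP}(f)$. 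So the ``$f^{[w]}$ is a binomial for all $w$'' clause is equivalent to the stated vertex condition. It then remains to verify that, under the standing hypothesis ${\rm char}(k)\nmid\alpha_j-\alpha_i$, semiregularity at each $w_i$ holds automatically: here I invoke Proposition~\ref{reg3} with $n=1$, where the matrix $M$ is the $1\times 1$ matrix $(\beta_{i+1}-\beta_i)$; since $\det(M)=\beta_{i+1}-\beta_i$ and ${\rm char}(k)\nmid(\beta_{i+1}-\beta_i)$, condition~(2) of Proposition~\ref{reg3} is satisfied, so $f^{[w_i]}$ is semiregular at $w_i$ regardless of the coefficients. This establishes the ``regular $\iff$ vertices'' equivalence.

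Finally, for the root count, I would apply Corollary~\ref{semi7}: when $f$ is regular, $|Z_K(f)|=\sum_{w\in{\rm Trop}(f)\cap v(\pi)\Z}|Z_k({\rm in}_w(f))|$. Splitting the sum over $i=1,\ldots,|\mathcal{B}|-1$ and writing $\chi_{v(\pi)\Z}$ for the indicator of $v(\pi)\Z$, each term contributes $|Z_k({\rm in}_{w_i}(f))|$ when $w_i\in v(\pi)\Z$ and nothing otherwise. Since ${\rm in}_{w_i}(f)=\delta(a_{\beta_{i+1}})X^{\beta_{i+1}}+\delta(a_{\beta_i})X^{\beta_i}$ by Definition~\ref{defn-lp} restricted to the two endpoints of the $i$-th segment (all intermediate monomials of $f$, if any, have strictly larger $w_i$-weight), the formula in the statement follows verbatim. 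I do not expect any genuine obstacle here; the only point requiring a little care is the bookkeeping identifying the lattice points of $\mathcal{A}$ on the lower hull with $\mathcal{B}$ and translating ``every segment is a single edge'' into ``every point of $\mathcal{B}$ is a vertex'' — this is the combinatorial heart of the regularity-vs-vertices equivalence, and everything else is a mechanical transcription of Proposition~\ref{reg3} and Corollary~\ref{semi7} to the one-dimensional case.
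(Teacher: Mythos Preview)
Your proposal is correct and follows essentially the same route as the paper. The paper presents Corollary~\ref{np-s} as a summary of the preceding discussion paragraph, which argues exactly as you do: ${\rm Trop}(f)$ is finite and given by the segment slopes via Lemma~\ref{trop7}; the binomial condition on $f^{[w]}$ translates into the vertex condition on the points indexed by~$\mathcal{B}$; semiregularity is automatic from the hypothesis ${\rm char}(k)\nmid\alpha_j-\alpha_i$ via condition~(2) of Proposition~\ref{reg3}; and the root count is read off from Corollary~\ref{semi7} together with the identification ${\rm in}_{w_i}(f)=\delta(a_{\beta_{i+1}})X^{\beta_{i+1}}+\delta(a_{\beta_i})X^{\beta_i}$.
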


Finally, note that given a polynomial $f\in K[X]_\mathcal{A}$ and a subset $\{\alpha_1,\alpha_t\}\subseteq\mathcal{B}\subseteq\mathcal{A}$,
it is possible to determine whether $f$ belongs to $K[X]_{\mathcal{A}}^{\mathcal{B}}$ by just testing a few linear inequalities in the
valuations of the coefficients: a point $\alpha\in\mathcal{A}$ is in the support of the Newton Polygon if and only if
$$v(a_\alpha)\leq v(a_{\alpha'})\frac{\alpha-\alpha''}{\alpha'-\alpha''} + v(a_{\alpha''})\frac{\alpha'-\alpha}{\alpha'-\alpha''}$$
for all $\alpha',\alpha''\in\mathcal{A}$ with $\alpha'<\alpha<\alpha''$. Inspired by this simple test, we introduce the set $S(\mathcal{B}/\mathcal{A})\subseteq\R^t$ defined as the set of all vectors $(v_1,\ldots,v_t)\in\R^t$ such that
$$v_i\leq v_j\frac{\alpha_i-\alpha_k}{\alpha_j-\alpha_k} + v_k\frac{\alpha_j-\alpha_i}{\alpha_j-\alpha_k}$$
for all $1\leq j<i<k\leq t$ if and only if $\alpha_i\in\mathcal{B}$. This means that a polynomial $f\in K[X]_{\mathcal{A}}$ belongs to
$K[X]_{\mathcal{A}}^{\mathcal{B}}$ if and only if $(v(a_{\alpha_1}),\ldots,v(a_{\alpha_t}))\in S(\mathcal{B}/\mathcal{A})$. In the analysis
of random univariate polynomials of Section~\ref{sec-prob}, we will need the Lebesgue measure of the set $S(\mathcal{B}/\mathcal{A})\cap [0,1]^t$,
which will be denoted $P(\mathcal{B}/\mathcal{A})$. Roughly speaking, $P(\mathcal{B}/\mathcal{A})$ is the probability that the
set of points $\{(\alpha_1,v_1),\ldots,(\alpha_t,v_t)\}$, where $v_i\sim\mathcal{U}[0,1]$ are independent random
variables, has Newton Polygon supported at $\mathcal{B}$.
From the form of the equations defining these sets, note that $(v_1,\ldots,v_t)\in S(\mathcal{B}/\mathcal{A})$ if and only if
$(av_1+b,\ldots,av_t+b)\in S(\mathcal{B}/\mathcal{A})$ for all $a,b\in\R$, i.e. these sets are invariant under rescaling and translations.
In particular, the measure of $S(\mathcal{B}/\mathcal{A})\cap [a,b]^t$ is equal to $(b-a)^t P(\mathcal{B}/\mathcal{A})$.

\section{Tropical genericity of regular systems}\label{sec-gen}

\begin{defn}\label{def-gen1}
Consider a proposition $P:(K^\ast)^n\to\{{\rm True},{\rm False}\}$. We say that $P$ is true for any generic $x\in(K^\ast)^n$ if and only
if $P^{-1}({\rm False})$ is contained in an algebraic hypersurface of $(K^\ast)^n$. Similarly,
a proposition $P:(K^\ast)^n\to\{{\rm True},{\rm False}\}$ is said to be true for any tropically generic $x\in(K^\ast)^n$ if and only if
$v(P^{-1}({\rm False}))$ is contained in a finite union of hyperplanes of $\R^n$.
\end{defn}

Note that genericity implies tropical genericity: if a statement $P$ is true for generic $x\in(K^\ast)^n$, then there is a hypersurface
$Z_K(G)\subseteq(K^\ast)^n$ that contains $P^{-1}({\rm False})$, and therefore, the tropical hypersurface ${\rm Trop}(G)$, which is contained in a
finite union of hyperplanes of $\R^n$, contains $v(P^{-1}({\rm False}))$.

\bigskip

Let $\mathcal{A}_1,\ldots,\mathcal{A}_n\subseteq\Z^n$ be nonempty finite sets. Consider a system of polynomials $F=(f_1,\ldots,f_n)$ in $K\vars$ with
undetermined (non-zero) coefficients and ${\rm Supp}(f_i)=\mathcal{A}_i$ for all $i=1,\ldots,n$. Let $N=|\mathcal{A}_1|+\cdots+|\mathcal{A}_n|$
be the number of coefficients in $F$. Once these supports have been fixed, we can speak about propositions for generic or tropically generic
systems $F$ in the sense of Definition~\ref{def-gen1}: the domain of the propositions is understood to be the coefficient space $(K^\ast)^N$ of
the systems.

\begin{thm}\label{gen1}
Any tropically generic system $F=(f_1,\ldots,f_n)$ in $K\vars$ has finite tropical prevariety ${\rm Trop}(F)$ and its lower polynomials
$f_i^{[w]}$ are binomials for all $w\in{\rm Trop}(F)$ and $i=1,\ldots,n$.
\end{thm}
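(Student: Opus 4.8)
The plan is to reduce the two assertions -- finiteness of $\mathrm{Trop}(F)$ and binomiality of the lower polynomials -- to a collection of linear conditions on the valuation vector of the coefficients, and then to recognize that the exceptional locus where these conditions fail is precisely a finite union of hyperplanes in valuation space. Throughout, I will work with the coefficient space $(K^\ast)^N$ and the map $v:(K^\ast)^N\to\mathbb{R}^N$ sending each system to the vector of valuations of its coefficients; a statement is tropically generic exactly when its failure set pulls back to a set whose image under $v$ lies in finitely many hyperplanes of $\mathbb{R}^N$.

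First I would analyze a single polynomial $f_i=\sum_{j}a_{ij}X^{\alpha_{ij}}$ with support $\mathcal{A}_i=\{\alpha_{i1},\ldots,\alpha_{i,t_i}\}$. By Lemma~\ref{trop1}, a point $w\in\mathrm{Trop}(f_i)$ comes with a subset $S\subseteq\{1,\ldots,t_i\}$ of size $\geq 2$ of indices achieving the minimum $w$-weight. For a \emph{fixed} pair of indices $\{j,\ell\}$, the locus $\{w : l_j(f_i;w)=l_\ell(f_i;w)\}$ is the hyperplane $H_{j\ell}^{(i)}=\{w:(\alpha_{ij}-\alpha_{i\ell})\cdot w = v(a_{i\ell})-v(a_{ij})\}$ (genuinely a hyperplane, not all of $\mathbb{R}^n$, because $\alpha_{ij}\neq\alpha_{i\ell}$). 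The lower polynomial $f_i^{[w]}$ fails to be a binomial exactly when some three indices $j,\ell,m$ simultaneously achieve the minimum, i.e. $w$ lies in $H_{j\ell}^{(i)}\cap H_{jm}^{(i)}$. The key observation is that the \emph{pairwise} intersection pattern of these hyperplanes depends only on the differences of exponent vectors, which are fixed, and on the vector $v(a)$, which varies linearly. So I would impose the tropical-genericity condition: the failure set corresponds to those $v(a)$ for which three of the hyperplanes $H_{j\ell}^{(i)}$ (over all $i$ and all triples within $\mathcal{A}_i$) meet in a common point that is also in every $\mathrm{Trop}(f_i)$. For a fixed triple, ``three affine hyperplanes with fixed normals have a common point'' is a codimension-one linear condition on $v(a)$ -- it is the vanishing of a certain determinant-type expression in the $v(a_{ij})$, hence a hyperplane in $\mathbb{R}^N$ -- \emph{unless} the normals are linearly dependent in a degenerate way, in which case either the condition is automatically satisfied or automatically violated; I'd need to argue the ``automatically satisfied'' case does not occur, or absorb it. There are finitely many triples, so finitely many such hyperplanes.

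Next, for finiteness of $\mathrm{Trop}(F)=\bigcap_i\mathrm{Trop}(f_i)$: once each $f_i^{[w]}$ is a binomial for every $w\in\mathrm{Trop}(f_i)$, the set $\mathrm{Trop}(f_i)$ is contained in the union of the $\binom{t_i}{2}$ hyperplanes $H_{j\ell}^{(i)}$ (by Lemma~\ref{trop1}). Picking one hyperplane from each $\mathrm{Trop}(f_i)$, $i=1,\ldots,n$, and intersecting gives $n$ affine hyperplanes in $\mathbb{R}^n$ with fixed normals $\alpha_{ij}-\alpha_{i\ell}$; this intersection is a single point whenever those $n$ normals are linearly independent, and is empty or positive-dimensional otherwise. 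The genericity condition I would add is: for every choice of one hyperplane per factor whose normals happen to be linearly \emph{dependent}, the corresponding affine system $Mw=c$ (with $M$ having those normals as rows, $c$ built from $v(a)$) is \emph{inconsistent}. Again, for fixed $M$ of rank $<n$, consistency of $Mw=c$ is a nonempty proper linear condition on $c$, hence on $v(a)$ -- it is the vanishing of finitely many linear functionals coming from the left-kernel of $M$ -- so its failure defines finitely many hyperplanes in $\mathbb{R}^N$. Combining: off the finite union of all these hyperplanes, $\mathrm{Trop}(F)$ is a finite intersection of lines-or-points, hence finite, and every lower polynomial is a binomial.

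The main obstacle I expect is the bookkeeping around degenerate normal configurations: when three (resp. $n$) of the relevant hyperplane normals are linearly dependent, I must check that the ``bad'' behavior (triple coincidence, or consistency of an underdetermined linear system) is a \emph{proper} linear subvariety of the coefficient-valuation space rather than all of it -- i.e. that I am not inadvertently declaring a condition that holds for \emph{all} coefficient choices to be ``generic failure.'' The clean way to handle this is to observe that the failure locus in each case is cut out by at least one linear equation in $v(a)$ that is not identically zero: for the triple-coincidence case, fixing the two differences $\alpha_{ij}-\alpha_{i\ell}$ and $\alpha_{ij}-\alpha_{im}$ and varying $v(a_{im})$ alone moves $H^{(i)}_{jm}$ off any fixed point, so the condition is non-vacuous; for the consistency case, if $\mathrm{rank}(M)<n$ pick $0\neq u$ with $u^{\top}M=0$, and then $u^{\top}c=0$ is the required non-vacuous linear constraint since $c$ ranges over a full coordinate subspace of $\mathbb{R}^N$. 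Once this is established, the finitely-many-hyperplanes conclusion of Definition~\ref{def-gen1} follows and the theorem is proved.
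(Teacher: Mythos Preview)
Your finiteness argument (the second step in your write-up) is essentially the paper's argument and is correct: for each choice of one pair $(\alpha_i,\beta_i)$ per polynomial with linearly dependent differences, consistency of the affine system $Mw=c$ is a proper linear condition on $\mu=v(a)$, and there are finitely many such choices.

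The binomiality argument, however, has a genuine gap. You claim that ``three affine hyperplanes with fixed normals have a common point'' is a codimension-one condition on $v(a)$. For a triple $j,\ell,m$ taken from a \emph{single} support $\mathcal{A}_i$, the three equations $l_j=l_\ell$, $l_j=l_m$, $l_\ell=l_m$ are linearly dependent (the third is the difference of the first two), so the intersection $H_{j\ell}^{(i)}\cap H_{jm}^{(i)}$ is an affine subspace of dimension $\geq n-2$ in $\mathbb{R}^n$. For $n\geq 2$ it is \emph{always} nonempty whenever $\alpha_{ij}-\alpha_{i\ell}$ and $\alpha_{ij}-\alpha_{im}$ are independent --- no condition on $v(a)$ at all. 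Consequently your sentence ``once each $f_i^{[w]}$ is a binomial for every $w\in\mathrm{Trop}(f_i)$'' describes a set that is empty (not tropically generic) as soon as $n\geq 2$ and some $|\mathcal{A}_i|\geq 3$: every tropical hypersurface of a polynomial with three or more terms has cells where three terms tie. You do write ``\ldots that is also in every $\mathrm{Trop}(f_i)$'', but you never use those extra $n-1$ constraints when arguing codimension one; without them the argument collapses.

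The fix is exactly what the paper does, and the order matters. First exclude $\mu$ from the hyperplanes~$H$ guaranteeing finiteness. Once $\mu\notin H$, any $w\in\mathrm{Trop}(F)$ is the \emph{unique} solution of a system~(\ref{eq-xxxx}) with invertible coefficient matrix, so $w$ is a linear function of $\mu$. If some $f_1^{[w]}$ has a third term $a_{\gamma_1}^{(1)}X^{\gamma_1}$, the additional equation $v(a_{\gamma_1}^{(1)})+\gamma_1\cdot w=v(a_{\alpha_1}^{(1)})+\alpha_1\cdot w$ becomes, after substituting that linear expression for $w$, a single nontrivial linear equation in $\mu$. The point is that you need $n$ hyperplanes (one from each $f_i$) to pin down $w$, and then the $(n+1)$-st equation from the extra monomial yields the codimension-one constraint. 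Your write-up tried to get the constraint from only the hyperplanes of a single $f_i$, which cannot work for $n\geq 2$.
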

\begin{proof}
Write $f_i=\sum_{\alpha\in\mathcal{A}_i}a_{\alpha}^{(i)}X^\alpha$ for $i=1,\ldots,n$. Assume first that ${\rm Trop}(F)$ is an infinite set.
We will show that the vector $\mu=v(a_\alpha^{(i)})_{1\leq i\leq n,\,\alpha\in\mathcal{A}_i}\in\R^N$ lies on a finite union
of hyperplanes $H\subseteq\R^N$ that depends only on the sets $\mathcal{A}_1,\ldots,\mathcal{A}_n$. By Lemma~\ref{semi2}, there are $\alpha_i,\beta_i\in\mathcal{A}_i$
for $i=1,\ldots,n$ such that the system of linear equations
\begin{eqnarray}\label{eq-xxxx}
v(a_{\alpha_1}^{(1)}) + \alpha_1\cdot w & = & v(a_{\beta_1}^{(1)}) + \beta_1\cdot w \nonumber \\
\vdots\qquad\quad & & \quad\qquad\vdots \\
v(a_{\alpha_n}^{(n)}) + \alpha_n\cdot w & = & v(a_{\beta_n}^{(n)}) + \beta_n\cdot w \nonumber
\end{eqnarray}
has infinitely many solutions $w\in\R^n$. This means that the determinant of the matrix whose rows are $\alpha_i-\beta_i$ for $i=1,\ldots,n$ is zero and
that
$$\left(v(a_{\alpha_1}^{(1)})-v(a_{\beta_1}^{(1)}),\ldots,v(a_{\alpha_n}^{(n)})-v(a_{\beta_n}^{(n)})\right)\in
   \big\langle\alpha_i-\beta_i\,:\,i=1,\ldots,n\big\rangle$$
Since the vectors $\alpha_i-\beta_i$ for $i=1,\ldots,n$ are $\R$-linearly dependent (the determinant of the matrix is zero), the subspace at the right
side of the condition above has codimension one (or more) in $\R^n$. This translates into a condition that says that $\mu$ belongs to some hyperplane
of $\R^N$ that depends only on $\alpha_1,\beta_1,\ldots,\alpha_n,\beta_n$. We conclude by taking $H$ as the union of these hyperplanes for all possible
choice of $\alpha_1,\beta_1\in\mathcal{A}_1,\ldots,\alpha_n,\beta_n\in\mathcal{A}_n$ such that $\{\alpha_i-\beta_i\,:\,i=1,\ldots,n\}$ is a $\R$-linearly
dependent set.

Now assume that $\mu\not\in H$, and in particular ${\rm Trop}(F)$ is finite, but $f_i^{[w]}$ has three or more terms for some $i=1,\ldots,n$ and
$w\in{\rm Trop}(F)$. We will show that there is a finite union  of hyperplanes $H'\subseteq\R^N$, that depends only on $\mathcal{A}_1,\ldots,\mathcal{A}_n$,
such that $\mu\in H'$. It is enough to consider the case where the polynomial with three or more monomials is $f_1^{[w]}$. The point $w$ is the
unique solution of the system (\ref{eq-xxxx}) for some $\alpha_1,\beta_1\in\mathcal{A}_1,\ldots,\alpha_n,\beta_n\in\mathcal{A}_n$, and in particular,
the monomials $a_{\alpha_1}^{(1)}X^{\alpha_1}$ and $a_{\beta_1}^{(1)}X^{\beta_1}$ are in $f_1^{[w]}$. Since $f_1^{[w]}$ has three or more terms, there
exists $\gamma_1\in\mathcal{A}_1\setminus\{\alpha_1,\beta_1\}$ such that the term $a_{\gamma_1}^{(1)}X^{\gamma_1}$ is in $f_1^{[w]}$. The equation
${v(a_{\gamma_1}^{(1)})+\gamma_1\cdot w=v(a_{\alpha_1}^{(1)})+\alpha_1\cdot w}$, where $w$ is the unique solution of (\ref{eq-xxxx}) expressed as a
linear function of $v(a_{\alpha_1}^{(1)}),v(a_{\beta_1}^{(1)}),\ldots,v(a_{\alpha_n}^{(n)}),v(a_{\beta_n}^{(n)})$ gives a non-trivial linear equation
for the valuation of the coefficients of $F$, thus restricting $\mu$ to a hyperplane (that depends only on the choice of $\alpha_1,\beta_1,\gamma_1,\ldots,\alpha_n,\beta_n$). We conclude by taking the union of all these possible hyperplanes.
\end{proof}

According to Algorithm~\ref{alg-reg-test}, a system $F$ can fail to be regular for three different reasons (see lines 3, 8 and 17): when
the tropical prevariety is not finite, when some lower polynomial has more than two terms, or when ${\rm char}(k)$ divides the determinant
of certain invertible matrices. By Theorem~\ref{gen1}, the first two do not occur for tropically generic systems, and in particular, if
${\rm char}(k)=0$, then any tropically generic system is regular. The same idea works for any characteristic coprime to all the determinants
that can arise in the test in line~16.

\begin{coro}\label{gen2}
Let $\mathcal{A}_1,\ldots,\mathcal{A}_n\subseteq\Z^n$ be nonempty finite sets. Assume that ${\rm char}(k)=0$ or that ${\rm char}(k)$ is coprime to
the determinant of all invertible matrices $M=[\alpha_1-\beta_1;\ldots;\alpha_n-\beta_n]$ with $\alpha_i,\beta_i\in\mathcal{A}_i$ for
$i=1,\ldots,n$. Then, any tropically generic system of polynomials $F=(f_1,\ldots,f_n)$ in $K\vars$ with ${\rm supp}(f_i)=\mathcal{A}_i$ for
$i=1,\ldots,n$ is regular.
\end{coro}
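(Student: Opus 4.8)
The plan is to combine Theorem~\ref{gen1} with the analysis following Algorithm~\ref{alg-reg-test}. By Theorem~\ref{gen1}, outside a finite union of hyperplanes $H_0\subseteq\R^N$ (depending only on $\mathcal{A}_1,\ldots,\mathcal{A}_n$), a system $F$ with $\mathrm{supp}(f_i)=\mathcal{A}_i$ has finite tropical prevariety and all its lower polynomials $f_i^{[w]}$, for $w\in{\rm Trop}(F)$, are binomials. So it remains to show that, for such $F$, semiregularity at every $w\in{\rm Trop}(F)$ holds under our hypothesis on $\mathrm{char}(k)$, without excluding any further set of coefficients.

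First I would fix a system $F$ with $v(\text{coefficients})\notin H_0$ and a point $w\in{\rm Trop}(F)$. By Lemma~\ref{semi5}, $F$ is semiregular at $w$ iff $F^{[w]}$ is, and $F^{[w]}$ is a binomial system; write $f_i^{[w]}=a_iX^{\alpha_i}-b_iX^{\beta_i}$ (up to sign, which is irrelevant for semiregularity by Lemma~\ref{semi2}). Let $M=[\alpha_1-\beta_1;\ldots;\alpha_n-\beta_n]$. Since ${\rm Trop}(F)\supseteq{\rm Trop}(F^{[w]})\ni w$ and ${\rm Trop}(F)$ is finite, Lemma~\ref{reg2} forces $\det(M)\neq 0$, so Proposition~\ref{reg3} applies. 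Now I invoke condition~(2) of Proposition~\ref{reg3}: $F^{[w]}$ is semiregular at $w$ whenever $\mathrm{char}(k)\nmid\det(M)$. But $M$ is exactly one of the invertible matrices $[\alpha_1-\beta_1;\ldots;\alpha_n-\beta_n]$ with $\alpha_i,\beta_i\in\mathcal{A}_i$ appearing in the hypothesis, so either $\mathrm{char}(k)=0$ or $\mathrm{char}(k)$ is coprime to $\det(M)$; in both cases $\mathrm{char}(k)\nmid\det(M)$. Hence $F^{[w]}$, and therefore $F$, is semiregular at $w$.

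Since this holds for every $w\in{\rm Trop}(F)$ and ${\rm Trop}(F)$ is finite with all lower polynomials binomial, $F$ meets every clause of Definition~\ref{def-reg} and is regular. Thus the set of non-regular systems has valuation vector contained in $H_0$, a finite union of hyperplanes of $\R^N$, which is precisely the assertion that regularity holds for any tropically generic system in the sense of Definition~\ref{def-gen1}.

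The only subtlety worth checking is a bookkeeping one: that the exceptional hyperplane $H_0$ from Theorem~\ref{gen1} genuinely depends only on the supports $\mathcal{A}_1,\ldots,\mathcal{A}_n$ and not on the chosen coefficients, and that no \emph{further} exceptional set is needed for the semiregularity step — which is exactly what condition~(2) of Proposition~\ref{reg3} buys us, since it is a condition on $\det(M)$ (a combinatorial quantity determined by the exponent vectors appearing in $F^{[w]}$, hence ultimately by the $\mathcal{A}_i$) rather than on the valuations of the coefficients. I do not anticipate a real obstacle here; the result is essentially a packaging of Theorem~\ref{gen1} and Proposition~\ref{reg3}.
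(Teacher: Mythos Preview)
Your approach is essentially the same as the paper's (the corollary is stated without proof, being an immediate consequence of the paragraph preceding it, which combines Theorem~\ref{gen1} with the failure analysis of Algorithm~\ref{alg-reg-test} and Proposition~\ref{reg3}). The logic---tropically generic gives finite prevariety and binomial lower systems, then the $\mathrm{char}(k)$ hypothesis forces condition~(2) of Proposition~\ref{reg3}---is exactly right.

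One small correction: the containment ${\rm Trop}(F)\supseteq{\rm Trop}(F^{[w]})$ that you invoke is not established in the paper and is in fact not true in general (Lemma~\ref{trop6} only gives local coincidence). What you actually need is that ${\rm Trop}(F^{[w]})$ is finite and nonempty, and this is exactly Lemma~\ref{semi1}: since ${\rm Trop}(F)$ is finite, $w$ is isolated, so ${\rm Trop}(F^{[w]})=\{w\}$. Then Lemma~\ref{reg2} gives $\det(M)\neq 0$ as you wanted. With that replacement the argument is complete.
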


\section{The expected number of roots of a random polynomial}\label{sec-prob}

In this section we restrict the discussion to univariate polynomials. Let $\mathcal{A}\subseteq\Z$ be a nonempty finite set. Assume that the
characteristic of the residue field is either zero or coprime to $\alpha-\beta$ for all $\alpha,\beta\in\mathcal{A}$ with ${\alpha\neq\beta}$. This assumption
ensures, by Corollary~\ref{gen2}, that any tropically generic polynomial $f\in K[X]$ with support $\mathcal{A}$ is regular. Since we have an explicit formula for
the number of roots of regular polynomials in $K^\ast$, we should be able to obtain the expected number of roots of $f$ in $K^\ast$,
provided that we select the coefficients of $f$ at random with a distribution that produces tropically generic polynomials with probability $1$.

\bigskip

Let $D_1$ be a probability distribution on $k^\ast$, let $D_2$ be a probability distribution on $1+\M$, and let $M>0$. We will select
elements in $K^\ast$ at random by selecting their valuation uniformly in $[-M,M]\cap v(\pi)\Z$, their first digit according to $D_1$, and
their tail in $1+\M$ according to $D_2$. This procedure induces a probability distribution in $K^\ast$ that extends to a distribution in
$K[X]_{\mathcal{A}}=\{f\in K[X]\,:\,{\rm supp}(f)=\mathcal{A}\}$. Denote by $E(\mathcal{A},D_1,D_2,M,K)$ the expected number of roots
in $K^\ast$ of a polynomial $f\in K[X]_{\mathcal{A}}$ chosen at random with this distribution. Since the number of roots of these
polynomials can not exceed their degree, we have that $E(\mathcal{A},D_1,D_2,M,K)\leq\max_{\alpha,\beta\in\mathcal{A}}|\alpha-\beta|$.
The main goal of this section is to find the value of
$$E(\mathcal{A},D_1,D_2,K)=\lim_{M\to\infty}E(\mathcal{A},D_1,D_2,M,K)$$
for several fields $K$ and probability distributions $D_1$ and $D_2$.

\begin{lemma}\label{prob0}
Consider the probability distribution in $K[X]_{\mathcal{A}}$ induced by $D_1$, $D_2$, and $M>0$.
Assume that ${\rm char}(k)$ is zero or coprime to $\alpha-\beta$ for all $\alpha,\beta\in\mathcal{A}$ with $\alpha\neq\beta$.
Then, the probability that a random $f\in K[X]_{\mathcal{A}}$ is not regular approches zero as $M\to\infty$.
\end{lemma}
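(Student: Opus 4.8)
The plan is to show that the bad set of coefficient valuations --- those for which $f$ fails to be regular --- is contained in a finite union of hyperplanes of $\R^{|\mathcal{A}|}$, and then to argue that the discrete uniform distribution on $[-M,M]\cap v(\pi)\Z$ assigns vanishing probability to any fixed hyperplane as $M\to\infty$. Concretely, write $f=\sum_{\alpha\in\mathcal{A}}a_\alpha X^\alpha$ and let $\mu=(v(a_\alpha))_{\alpha\in\mathcal{A}}\in\R^{|\mathcal{A}|}$. By Theorem~\ref{gen1} (applied in the univariate case, $n=1$), there is a finite union of hyperplanes $H\subseteq\R^{|\mathcal{A}|}$, depending only on $\mathcal{A}$, such that whenever $\mu\notin H$ the tropical prevariety ${\rm Trop}(f)$ is finite and every lower polynomial $f^{[w]}$ is a binomial. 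Under the standing hypothesis that ${\rm char}(k)$ is zero or coprime to $\alpha-\beta$ for all $\alpha\neq\beta$ in $\mathcal{A}$, the third possible obstruction to regularity in Algorithm~\ref{alg-reg-test} (that ${\rm char}(k)$ divides the relevant determinant $|\alpha-\beta|$) never occurs: each lower binomial $a X^\alpha - b X^\beta$ has associated $1\times 1$ matrix $M=[\alpha-\beta]$ with $|\det M|=|\alpha-\beta|$, which is coprime to ${\rm char}(k)$, so condition~(2) of Proposition~\ref{reg3} is automatically satisfied. Hence, as noted in the paragraph following Corollary~\ref{gen2}, $\mu\notin H$ already forces $f$ to be regular, i.e. the set of valuation vectors of non-regular $f$ is contained in $H$.

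Next I would reduce the probability estimate to a single hyperplane. Since $H$ is a finite union of hyperplanes $H_1,\ldots,H_r$, by the union bound it suffices to show that $\Pr[\mu\in H_j]\to 0$ as $M\to\infty$ for each fixed $j$. Fix such a hyperplane, defined by a nontrivial linear equation $\sum_{\alpha\in\mathcal{A}}c_\alpha v_\alpha = c_0$ with integer (or rational) coefficients $c_\alpha$ not all zero; pick an index $\alpha_0$ with $c_{\alpha_0}\neq 0$. In our sampling model the valuations $v(a_\alpha)$ are independent and each is uniform on the set $[-M,M]\cap v(\pi)\Z$, a finite set of cardinality $\Theta(M)$. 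Condition on the values of all $v(a_\alpha)$ for $\alpha\neq\alpha_0$: the event $\mu\in H_j$ then forces $v(a_{\alpha_0})$ to equal one specific real number, which is attained by at most one element of $[-M,M]\cap v(\pi)\Z$. Therefore the conditional probability is at most $1/|[-M,M]\cap v(\pi)\Z| = O(1/M)$, and averaging over the conditioning gives $\Pr[\mu\in H_j]=O(1/M)$. Summing the $r$ terms, $\Pr[f\text{ not regular}]\leq\Pr[\mu\in H]=O(1/M)\to 0$, which is exactly the claim. Note the first digits (distributed by $D_1$) and the tails in $1+\M$ (distributed by $D_2$) are irrelevant here, since regularity in the characteristic-coprime case depends only on the valuation vector $\mu$.

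The only genuinely delicate point is invoking Theorem~\ref{gen1} correctly: that theorem is stated for systems with prescribed supports $\mathcal{A}_1,\ldots,\mathcal{A}_n$, and I am specializing to $n=1$, $\mathcal{A}_1=\mathcal{A}$, where "the matrix whose rows are $\alpha_i-\beta_i$" degenerates to the $1\times 1$ matrix $[\alpha-\beta]$ with $\alpha\neq\beta$ (the case $\alpha=\beta$ being vacuous since then $f^{[w]}$ would not be a well-defined binomial). In that degenerate case the first part of the proof of Theorem~\ref{gen1} --- the condition forcing ${\rm Trop}(F)$ to be infinite --- is actually empty (a single nonzero integer $\alpha-\beta$ is never $\R$-linearly dependent), so ${\rm Trop}(f)$ is automatically finite for univariate $f$; this matches the well-known fact, recalled after Lemma~\ref{trop7}, that the tropical hypersurface of a univariate polynomial is always a finite set of slopes. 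The substantive content of Theorem~\ref{gen1} that I use is its second half: off a finite union of hyperplanes $H'$, no lower polynomial $f^{[w]}$ has three or more terms. Taking $H=H'$ completes the argument, and everything else is the routine counting estimate above.
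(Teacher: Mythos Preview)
Your proof is correct and follows essentially the same approach as the paper's own proof. The only cosmetic difference is that the paper invokes Corollary~\ref{gen2} directly to obtain the finite union of hyperplanes outside of which $f$ is regular, whereas you unpack that corollary by citing Theorem~\ref{gen1} and then handling the ${\rm char}(k)$ obstruction separately via Proposition~\ref{reg3}; the probability estimate (at most one lattice point per fiber over a hyperplane, giving a bound of order $1/M$ per hyperplane, then a union bound) is identical in both arguments.
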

\begin{proof}
Let $t=|\mathcal{A}|$ and $\mathcal{A}=\{\alpha_1,\ldots,\alpha_t\}$. By Corollary~\ref{gen2}, there are hyperplanes
$H_1,\ldots,H_n\subseteq\R^t$ such that any polynomial $f=\sum_{i=1}^ta_i X^{\alpha_i}$ with $(v(a_1),\ldots,v(a_t))\not\in\cup_{j=1}^nH_j$
is regular. In particular, it is enough to show that the probability that a random $f\in K[X]_{\mathcal{A}}$ has coefficients
with valuation in $\cup_{j=1}^nH_j$ goes to zero as $M\to\infty$. Note that this probability does not depend on the distributions $D_1$
and $D_2$. Moreover, since the valuation of the coefficients is selected at random in the box $([-M,M]\cap v(\pi)\Z)^t$ with uniform
distribution, the probability of being in the union of $n$ hyperplanes is less than or equal to $n/(2[M/v(\pi)]+1)$ (each hyperplane contains at most
$1/(2[M/v(\pi)]+1)$ of the points in the box). As the size of the box increases, this probability approaches zero.
\end{proof}

By Lemma~\ref{prob0}, the probability that a random $f\in K[X]_{\mathcal{A}}$ is regular approaches $1$ as $M$ goes to infinity. Besides,
we have shown in Proposition~\ref{reg3} (or in Algorithm~\ref{alg-reg-test}) that the number of solutions of a regular system does not depend on the
tail of the coefficients. In particular, the value of $E(\mathcal{A},D_1,D_2,K)$ does not depend on $D_2$, and for this reason, it will
be simply written as $E(\mathcal{A},D_1,K)$.

\bigskip

Before stating the main result, we need to fix some notation. For any $\gamma\in\N$, we denote by $E_k(\gamma,D_1)$ the expected
number of roots in $k^\ast$ of the binomial $aX^\gamma+b$ with coefficients $a,b\in k^\ast$ chosen at random (independently)
according to the distribution $D_1$. For instance, when $k$ is algebraically closed, we have $E_k(\gamma,D_1)=\gamma$ regardless
of the distribution $D_1$. If $k$ is a finite field and $D_1$ is the uniform distribution in $k^\ast$, then $E_k(\gamma,D_1)=1$, since
the number of roots of $X^\gamma=-b/a$ is either zero or $\gcd(|k^\ast|,\gamma)$, and the latter happens only when $-b/a$ is a
$\gamma$-th power in $k^\ast$ which occurs with probability $1/\gcd(|k^\ast|,\gamma)$. A similar situation arises in the case
$k=\R$ and $D_1$ a distribution such that $\R_{>0}$ and $\R_{<0}$ have each probability $1/2$: if $\gamma$ is odd, then $X^\gamma=-b/a$
has always one real root, and if $\gamma$ is even, the number of roots is either $0$ or $2$ depending on whether ${\rm sgn}(ab)$ is~$1$
or~$-1$, but in both cases we have $E_k(\gamma,D_1)=1$.

\medskip

\begin{center}
  \begin{tabular}{|c|c|c|}
     \hline
     $k$ & $D_1$ & $E_k(\gamma,D_1)$ \\
     \hline
     $k$ alg.~closed & any & $\gamma$ \\
     \hline
     $k=\R$ & ${\rm Prob}(\R_{<0})={\rm Prob}(\R_{>0})=1/2$ & $1$ \\
     \hline
     $|k|<+\infty$ & uniform in $k^\ast$ & $1$ \\
     \hline
  \end{tabular}
\end{center}

\medskip

\begin{thm}\label{prob1}
Let $\mathcal{A}=\{\alpha_1<\alpha_2\cdots<\alpha_t\}\subseteq\Z$ finite with $t\geq 2$.
Assume that ${\rm char}(k)=0$ or that ${\rm char}(k)$ is coprime to $\alpha-\beta$ for any pair
of elements $\alpha,\beta\in\mathcal{A}$ with $\alpha\neq\beta$. Let $D_1$ be a probability distribution in $k^\ast$. Then
$$E(\mathcal{A},D_1,K)=\!\!\!\!\!\!\!\!\sum_{\{\alpha_1,\alpha_t\}\subseteq\mathcal{B}\subseteq\mathcal{A}}\!\!\!\!\!\!\!\!
P(\mathcal{B}/\mathcal{A})\sum_{i=1}^{|\mathcal{B}|-1}\frac{E_k(\beta_{i+1}-\beta_i,D_1)}{\beta_{i+1}-\beta_i}$$
where $\mathcal{B}=\{\alpha_1=\beta_1<\beta_2<\cdots<\beta_{|\mathcal{B}|}=\alpha_t\}$.
\end{thm}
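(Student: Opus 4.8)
The strategy is to compute $E(\mathcal{A},D_1,M,K)$ by conditioning on the Newton polygon support of the random polynomial $f$, and then pass to the limit $M\to\infty$. First I would observe, using Lemma~\ref{prob0}, that the event ``$f$ is not regular'' has probability tending to zero, so up to an error term $o(1)$ we may assume $f$ is regular and restrict the expectation to regular polynomials. Next, I would partition the sample space $K[X]_{\mathcal{A}}$ according to which subset $\mathcal{B}$ with $\{\alpha_1,\alpha_t\}\subseteq\mathcal{B}\subseteq\mathcal{A}$ is the support of the lower hull of ${\rm NP}(f)$; that is, $f\in K[X]_{\mathcal{A}}^{\mathcal{B}}$. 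By the discussion preceding Corollary~\ref{np-s}, this event depends only on the vector of valuations $(v(a_{\alpha_1}),\ldots,v(a_{\alpha_t}))$, and by the scaling/translation invariance of the sets $S(\mathcal{B}/\mathcal{A})$ its probability converges, as $M\to\infty$, to $P(\mathcal{B}/\mathcal{A})$ (the uniform measure on the discrete box $([-M,M]\cap v(\pi)\Z)^t$, rescaled, approximates Lebesgue measure on $[0,1]^t$).

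Then, conditionally on $f\in K[X]_{\mathcal{A}}^{\mathcal{B}}$ and on $f$ being regular, Corollary~\ref{np-s} gives the exact number of roots of $f$ in $K^\ast$ as
$$\sum_{i=1}^{|\mathcal{B}|-1}\chi_{_{v(\pi)\Z}}\!\left(\frac{v(a_{\beta_{i+1}})-v(a_{\beta_i})}{\beta_{i+1}-\beta_i}\right)\left|Z_k(\delta(a_{\beta_{i+1}})X^{\beta_{i+1}}+\delta(a_{\beta_i})X^{\beta_i})\right|.$$
Taking conditional expectation, I would argue that the summand indexed by $i$ depends only on the valuations $v(a_{\beta_i}),v(a_{\beta_{i+1}})$ (through the $\chi$ factor) and the first digits $\delta(a_{\beta_i}),\delta(a_{\beta_{i+1}})$ (through $Z_k$ of a binomial equivalent to $X^{\beta_{i+1}-\beta_i}=-\delta(a_{\beta_i}/a_{\beta_{i+1}})$), and that these two groups of data are independent. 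The first-digit part contributes exactly $E_k(\beta_{i+1}-\beta_i,D_1)$ by definition. The valuation part: the indicator $\chi_{_{v(\pi)\Z}}$ is automatically $1$ on the support of our discrete distribution (valuations lie in $v(\pi)\Z$), so conditionally this factor is simply $1$ in the limit — or more carefully, one computes that the slope $\big(v(a_{\beta_{i+1}})-v(a_{\beta_i})\big)/(\beta_{i+1}-\beta_i)$ lands in $v(\pi)\Z$ with probability tending to $1/(\beta_{i+1}-\beta_i)$ (the difference of two independent near-uniform integer multiples of $v(\pi)$, divided by $\beta_{i+1}-\beta_i$, equidistributes modulo $(\beta_{i+1}-\beta_i)v(\pi)\Z$). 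This yields the factor $1/(\beta_{i+1}-\beta_i)$ in the statement. Multiplying the conditional expectation by $P(\mathcal{B}/\mathcal{A})$ and summing over all admissible $\mathcal{B}$ gives the claimed formula.

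\textbf{Main obstacle.} The delicate point is the limiting argument for the valuation-dependent factors: one must show simultaneously that (i) $\mathrm{Prob}(f\in K[X]_{\mathcal{A}}^{\mathcal{B}})\to P(\mathcal{B}/\mathcal{A})$, and (ii) conditionally on this event, the slope of the $i$-th Newton-polygon edge lies in $v(\pi)\Z$ with limiting probability $1/(\beta_{i+1}-\beta_i)$, \emph{and} that these conditional events are asymptotically independent of each other and of the first digits. Here the discreteness of the valuation distribution (multiples of $v(\pi)$ in $[-M,M]$) must be handled carefully: the box has $2\lfloor M/v(\pi)\rfloor+1$ points per coordinate, and one needs uniform estimates on how many lattice points fall in the regions cut out by the $S(\mathcal{B}/\mathcal{A})$ inequalities and the congruence conditions $\beta_{i+1}-\beta_i \mid (v(a_{\beta_{i+1}})-v(a_{\beta_i}))/v(\pi)$. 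The coprimality hypothesis on ${\rm char}(k)$ is what guarantees (via Corollary~\ref{gen2}) that regularity is generic, so that Corollary~\ref{np-s}'s root-count formula actually applies on a probability-$1-o(1)$ event. Once these equidistribution and independence facts are established, the rest is bookkeeping: separate the expectation into a product over the edges, identify each edge's contribution with $E_k(\beta_{i+1}-\beta_i,D_1)/(\beta_{i+1}-\beta_i)$, and sum over $\mathcal{B}$.
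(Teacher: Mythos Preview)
Your proposal is correct and follows essentially the same approach as the paper: partition by the Newton-polygon support $\mathcal{B}$, replace $|Z_K(f)|$ by the explicit formula of Corollary~\ref{np-s} on the regular locus (using Lemma~\ref{prob0} to control the complement), and then separate the contribution of valuations, first digits, and tails. The only organizational difference is that the paper computes the \emph{joint} probability of the event ``$(v(a_\alpha))_\alpha\in S(\mathcal{B}/\mathcal{A})$ and $\beta_{i+1}-\beta_i\mid (v(a_{\beta_{i+1}})-v(a_{\beta_i}))/v(\pi)$'' directly as a single Riemann sum on a sublattice, obtaining $P(\mathcal{B}/\mathcal{A})/(\beta_{i+1}-\beta_i)$ in one step, rather than factoring it as a marginal times a conditional; this sidesteps the asymptotic-independence issue you flag as the main obstacle. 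Also note that you do not need the congruence events for different edges $i$ to be asymptotically independent of each other: linearity of expectation handles each edge separately.
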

\begin{proof}
Let $D_2$ be any probability distribution in $1+\M$ and let $M>0$. Random polynomials $f=\sum_{\alpha\in\mathcal{A}}a_\alpha X^{\alpha}\in K[X]_{\mathcal{A}}$
will be chosen according to $D_1$, $D_2$ and $M$. For each subset
$\mathcal{B}=\{\alpha_1=\beta_1<\cdots<\beta_{|\mathcal{B}|}=\alpha_t\}\subseteq\mathcal{A}$,
denote by $K[X]_{\mathcal{A}}^{\mathcal{B}}$ the set of polynomials
$f\in K[X]_{\mathcal{A}}$ with Newton Polygon supported at~$\mathcal{B}$. By definition, we have
that $$E(\mathcal{A},D_1,D_2,M,K)=\int_{K[X]_{\mathcal{A}}}|Z_K(f)|\,df=
\sum_{\genfrac{}{}{0pt}{}{{\{\alpha_1,\alpha_t}\}\subseteq\mathcal{B}}{\mathcal{B}\subseteq\mathcal{A}}}\int_{K[X]_{\mathcal{A}}^{\mathcal{B}}}|Z_K(f)|\,df$$
and also
$$E(\mathcal{A},D_1,D_2,K)=\sum_{\genfrac{}{}{0pt}{}{{\{\alpha_1,\alpha_t}\}\subseteq\mathcal{B}}{\mathcal{B}\subseteq\mathcal{A}}}\lim_{M\to\infty}\int_{K[X]_{\mathcal{A}}^{\mathcal{B}}}|Z_K(f)|\,df.$$
For any $f\in K[X]_{\mathcal{A}}^{\mathcal{B}}$, define $$N(f)=\sum_{i=1}^{|{\mathcal{B}}|-1}\chi_{_{v(\pi)\Z}}\left(\frac{v(a_{\beta_{i+1}})-v(a_{\beta_i})}{\beta_{i+1}-\beta_i}\right)\left|Z_k(\delta(a_{\beta_{i+1}})X^{\beta_{i+1}}+\delta(a_{\beta_{i}})X^{\beta_{i}})\right|,$$
where $\chi_S(\cdot)$ represents the characteristic function of the set $S$.
This gives a function $N:K[X]_{\mathcal{A}}\to\N_0$ that, by Proposition~\ref{reg3}, coincides with $|Z_K(f)|$ for any $f\in K[X]_{\mathcal{A}}$ regular.
Moreover, the difference $N(f)-|Z_K(f)|$ is bounded on $K[X]_{\mathcal{A}}$. By Theorem~\ref{gen1}, the probability of the set of non-regular polynomials
approaches $0$ as $M$ goes to infinity, and then we can also write
\begin{align*}
E(\mathcal{A},D_1,D_2,K)&= \sum_{\genfrac{}{}{0pt}{}{{\{\alpha_1,\alpha_t}\}\subseteq\mathcal{B}}{\mathcal{B}\subseteq\mathcal{A}}}\lim_{M\to\infty}\int_{K[X]_{\mathcal{A}}^{\mathcal{B}}}N(f)\,df= \\
&=\sum_{\genfrac{}{}{0pt}{}{{\{\alpha_1,\alpha_t}\}\subseteq\mathcal{B}}{\mathcal{B}\subseteq\mathcal{A}}}\lim_{M\to\infty}\int_{K[X]_{\mathcal{A}}}\chi_{_{K[X]_{\mathcal{A}}^{\mathcal{B}}}}(f)N(f)\,df=\\
&=\sum_{\genfrac{}{}{0pt}{}{{\{\alpha_1,\alpha_t}\}\subseteq\mathcal{B}}{\mathcal{B}\subseteq\mathcal{A}}}\sum_{i=1}^{|{\mathcal{B}}|-1}\lim_{M\to\infty}\int_{K[X]_{\mathcal{A}}}N_{\mathcal{B},i}(f)\,df
\end{align*}
where $N_{\mathcal{B},i}(f)$ is the expression $$\chi_{_{K[X]_{\mathcal{A}}^{\mathcal{B}}}}(f)\chi_{_{v(\pi)\Z}}\left(\frac{v(a_{\beta_{i+1}})-v(a_{\beta_i})}{\beta_{i+1}-\beta_i}\right)\left|Z_k(\delta(a_{\beta_{i+1}})X^{\beta_{i+1}}+\delta(a_{\beta_{i}})X^{\beta_{i}})\right|.$$
Any polynomial $f\in K[X]_{\mathcal{A}}$ correspond with a unique point
$(w,\delta,e)\in ([-M,M]\cap v(\pi)\Z)^t\times (k^\ast)^t\times (1+\M)^t$.
Using this representation, we can write $\int_{K[X]_{\mathcal{A}}}N_{\mathcal{B},i}(f)\,df$ as the triple integral
\begin{equation*}
\int\!\!\!\!\int\!\!\!\!\int\!\chi_{_{K[X]_{\mathcal{A}}^{\mathcal{B}}}}(f)\chi_{_{v(\pi)\Z}}\left(\frac{w_{\beta_{i+1}}-w_{\beta_{i}}}{\beta_{i+1}-\beta_i}\right)\left|Z_k(\delta_{\beta_{i+1}}X^{\beta_{i+1}}+\delta_{\beta_{i}}X^{\beta_{i}})\right|de\,d\delta\,dw.
\end{equation*}
Since the function
$\chi_{_{K[X]_{\mathcal{A}}^{\mathcal{B}}}}(f)\chi_{_{v(\pi)\Z}}((w_{\beta_{i+1}}-w_{\beta_{i}})/
(\beta_{i+1}-\beta_i))$
depends only on $w$, and the function $|Z_k(\delta_{\beta_{i+1}}X^{\beta_{i+1}}+\delta_{\beta_{i}}X^{\beta_{i}})|$ depends only on $\delta$, the triple integral above can be splitted as a product of three simple integrals. More precisely, we have that $\int_{K[X]_{\mathcal{A}}}N_{\mathcal{B},i}(f)\,df=I_wI_\delta I_e$,
where
\begin{eqnarray*}
I_w & = & \int_{([-M,M]\cap v(\pi)\Z)^t}\chi_{_{K[X]_{\mathcal{A}}^{\mathcal{B}}}}(f)\chi_{_{v(\pi)\Z}}\left(\frac{w_{\beta_{i+1}}-w_{\beta_{i}}}{\beta_{i+1}-\beta_i}\right)\,dw,\\
I_\delta & = & \int_{(k^\ast)^t}\left|Z_k(\delta_{\beta_{i+1}}X^{\beta_{i+1}}+\delta_{\beta_{i}}X^{\beta_{i}})\right|\,d\delta, \\
I_e & = & \int_{(1+\M)^t} 1\,de.
\end{eqnarray*}
It is clear that $I_e=1$ and also $I_\delta=E_k(\beta_{i+1}-\beta_i,D_1)$ by definition.
The integral defining $I_w$ is in fact a finite sum over a lattice: if we write $N=[M/v(\pi)]$ and $v_\alpha=w_\alpha/v(\pi)$, then
\begin{eqnarray*}
I_w & = & (2N+1)^{-t}\sum_{-N\leq v_1,\ldots,v_t\leq N}\chi_{_{K[X]_{\mathcal{A}}^{\mathcal{B}}}}(f)\chi_{_\Z}\left(\frac{v_{\beta_{i+1}}-v_{\beta_{i}}}{\beta_{i+1}-\beta_i}\right)= \\
& = & (2N+1)^{-t}\sum_{\genfrac{}{}{0pt}{}{{-N\leq v_1,\ldots,v_t\leq N}}{ {\beta_{i+1}-\beta_i|v_{\beta_{i+1}}-v_{\beta_i}}}}\chi_{_{K[X]_{\mathcal{A}}^{\mathcal{B}}}}(f).
\end{eqnarray*}
The expression $\chi_{{K[X]_{\mathcal{A}}^{\mathcal{B}}}}(f)$ in the last sum is a function of $v_{\alpha_1},\ldots,v_{\alpha_t}$ that test
whether the Newton Polygon of the set of points $\{(v_\alpha,\alpha):\alpha\in\mathcal{A}\}$ is supported at $\mathcal{B}$, i.e. is equal to
$\chi_{_{S(\mathcal{B}/\mathcal{A})}}(v_{\alpha_1},\ldots,v_{\alpha_t})$. Since the set $S(\mathcal{B}/\mathcal{A})$ is invariant under rescaling
and translations, then
$$I_w=(2N+1)^{-t}\sum_{\genfrac{}{}{0pt}{}{{-N\leq v_1,\ldots,v_t\leq N}}{ {\beta_{i+1}-\beta_i|v_{\beta_{i+1}}-v_{\beta_i}}}}
\chi_{_{S(\mathcal{B}/\mathcal{A})}}\left(\frac{N+v_{\alpha_1}}{2N+1},\ldots,\frac{N+v_{\alpha_t}}{2N+1}\right).$$
Without the condition $\beta_{i+1}-\beta_i|v_{\beta_{i+1}}-v_{\beta_i}$, the expression is exactly a Riemman sum of $\chi_{_{S(\mathcal{B}/\mathcal{A})}}$,
with a partition of $[0,1]^t$ corresponding to the lattice $\{0,1/(2N+1),\ldots,1\}^t$. Adding this extra condition is equivalent to
taking a sublattice of order $\beta_{i+1}-\beta_i$, so $\lim_{M\to\infty}I_w=P(\mathcal{B}/\mathcal{A})(\beta_{i+1}-\beta_i)^{-1}$.
This shows that $$\lim_{M\to\infty}\int_{K[X]_{\mathcal{A}}}N_{\mathcal{B},i}(f)\,df=P(\mathcal{B}/\mathcal{A})\frac{E_k(\beta_{i+1}-\beta_i,D_1)}{\beta_{i+1}-\beta_i}.$$
Going back to our formula for $E(\mathcal{A},D_1,D_2,K)$, we get
$$E(\mathcal{A},D_1,D_2,K)=\!\!\!\!\!\!\!\!\sum_{\{\alpha_1,\alpha_{|\mathcal{A}|}\}\subseteq\mathcal{B}\subseteq\mathcal{A}}\!\!\!\!\!\!\!\!
P(\mathcal{B}/\mathcal{A})\sum_{i=1}^{|\mathcal{B}|-1}\frac{E_k(\beta_{i+1}-\beta_i,D_1)}{\beta_{i+1}-\beta_i}.$$
To conclude the proof, note that the right term does not depend on the probability distribution $D_2$, and then we can safely write $E(\mathcal{A},D_1,K)$,
as claimed.
\end{proof}

We conclude this section with an analysis of the case where the residue field is algebraically closed. In this case, we have
$E_k(\gamma,D_1)=\gamma$, regardless of the probability distribution $D_1$, so the formula of Theorem~\ref{prob1} reduces to
$$E(\mathcal{A},D_1,K)=\!\!\!\!\!\!\!\!\sum_{\{\alpha_1,\alpha_{|\mathcal{A}|}\}\subseteq\mathcal{B}\subseteq\mathcal{A}}\!\!\!\!\!\!\!\!
P(\mathcal{B}/\mathcal{A})(|\mathcal{B}|-1)=1+\sum_{i=2}^tP_i,$$
where $P_i=\sum_{\{\alpha_1,\alpha_i,\alpha_{|\mathcal{A}|}\}\subseteq\mathcal{B}\subseteq\mathcal{A}}P(\mathcal{B}/\mathcal{A})$ is
the probability that $\alpha_i$ is in the support of the Newton Polygon. The value of $P_i$ can be written in terms of integrals, as
shown in the following formula:
$$P_i=\int_0^1\cdots\int_0^1\min_{1\leq j<i<k\leq t}
\left(v_j\frac{\alpha_i-\alpha_k}{\alpha_j-\alpha_k}+v_k\frac{\alpha_j-\alpha_i}{\alpha_j-\alpha_k}\right)dv_1\cdots\widehat{dv_i}\cdots dv_t.$$
The estimations
$$P_i\leq\int_0^1\cdots\int_0^1\max(\min(v_1,\ldots,v_{i-1}),\min(v_{i+1},\ldots,v_t))dv_1\cdots\widehat{dv_i}\cdots dv_t,$$
$$P_i\geq\int_0^1\cdots\int_0^1\min(v_1,\ldots,\widehat{v_i},\ldots,v_t)dv_1\cdots\widehat{dv_i}\cdots dv_t,$$
show that $\frac{1}{t}\leq P_i\leq\frac{1}{i}+\frac{1}{t-i+1}-\frac{1}{t}$, and therefore
$$2-\frac{2}{t}\leq E(\mathcal{A},D_1,K)\leq 2\sum_{i=2}^t\frac{1}{i}\leq 2\ln(t).$$

\section*{Acknowledgements}

We would like to thank Maurice Rojas and Bernd Sturmfels for several fruitful discussions about regularity
and semiregularity, and for encouraging us to publish these results.

\end{document}